\documentclass[11pt]{article}
\usepackage{pb-diagram}
\usepackage{amsmath}
\usepackage{amsthm}
\usepackage{mathtools}
\usepackage{amsfonts}
\usepackage{amssymb}
\usepackage{graphicx}
\usepackage[usenames]{color}
\usepackage{xcolor}
\usepackage{hyperref}
\usepackage{soul}
\usepackage{enumitem}
\usepackage{amsmath}
\usepackage{amssymb}
\usepackage{amsfonts}
\usepackage{amscd}
\usepackage{hyperref}

\usepackage{tocloft}
\usepackage{xcolor}

\usepackage{hyperref}
\hypersetup{linktocpage}

\hypersetup{colorlinks,
    linkcolor={red!50!black},
    citecolor={blue!80!black},
    urlcolor={blue!80!black}}
\usepackage{float}

\newtheorem{theorem}{Theorem}[section]
\newtheorem{lemma}[theorem]{Lemma}
\newtheorem{cor}[theorem]{Corollary}

\newtheorem{prop}[theorem]{Proposition}

\theoremstyle{definition}
\newtheorem{definition}[theorem]{Definition} 
\newtheorem{remark}[theorem]{Remark}
\newtheorem{example}[theorem]{Example}

\newcommand{\be}{\begin{enumerate}}
\newcommand{\ee}{\end{enumerate}}
\newcommand{\beq}{\begin{equation}}
\newcommand{\eeq}{\end{equation}}

\def\N{{\mathbb{N}}}
\def\Z{{\mathbb{Z}}}

\def\MA{{\mathbb{A}}}
\def\MB{{\mathbb{B}}}

\def\A{{\mathcal{A}}}

{}
{}
{}

\newcommand{\Ker}{\operatorname{Ker}}
\newcommand{\Aut}{\operatorname{Aut}}

\newcommand{\NN}{\mathbb{N}}
\newcommand{\ZZ}{\mathbb{Z}}

\newcommand{\G}{\mathcal G}
\renewcommand{\wr}{\,{\rm wr}\,}
\newcommand{\e}{\varepsilon}

\newcommand{\ModS}{{\rm Mod}_{\mathcal C}}

\title{On the first-order genus of wreath products and their central extensions}
\author{Olga Kharlampovich\footnote{CUNY, Graduate Center and Hunter College} , Alexei Miasnikov\footnote{Stevens Institute of Technology}, Denis Osin\footnote{Vanderbilt University}
\thanks{The first author was supported by the Dolciani foundation. The third author was supported in part by the NSF Grant DMS-2153805. }}

\date{}
\begin{document}

\maketitle

\begin{abstract}
    We prove that groups of the form $\ZZ^m \wr \ZZ^n$, where $m,n \in \NN$, are regularly bi-interpretable with $\ZZ$ and therefore are first-order rigid: every finitely generated group elementarily equivalent to $\ZZ^m \wr \ZZ^n$ is isomorphic to $\ZZ^m \wr \ZZ^n$. On the other hand, we show that $\ZZ^2\wr \ZZ$ admits $2^{\aleph_0}$ elementarily equivalent, pairwise non-isomorphic central extensions with finite kernel.
\end{abstract}

\section{Introduction}

Classification of groups with respect to elementary equivalence is an area of algebra and mathematical logic that goes back to the pioneering works of L\"owenheim, Skolem, and Tarski in the 1920s. The L\"owenheim–Skolem theorem tells us that for a given infinite group $G$ and any infinite cardinal $\lambda$, there is a group $H$ of cardinality $\lambda$ that is elementarily equivalent to $G$ (symbolically, $G \equiv H$). Hence, the first-order theory $Th(G)$ of $G$ -- that is, the set of all sentences in the language of group theory $\{\cdot, {}^{-1}, 1\}$ that are true in $G$ -- does not uniquely characterize the group $G$ up to isomorphism.  This observation gives rise to the first-order classification problem, originally formulated by Tarski: for a given group $G$, characterize all groups elementarily equivalent to $G$. 

This problem turned out to be difficult, so a more tractable question was posed: how many groups $H$ of infinite cardinality $\lambda$ satisfy $H \equiv G$?
  
  The extremal instance of the Löwenheim–Skolem theorem occurs when there exists, up to isomorphism, a unique group $H$ of cardinality $\lambda$ such that $G \equiv H$. Morley termed such theories $Th(G)$ \emph{$\lambda$-categorical} and proved that if $Th(G)$ is categorical in some uncountable cardinal, then it is categorical in all uncountable cardinals (and he did so for arbitrary algebraic structures, not just groups).

Unfortunately, there are not many groups that are uncountably or countably categorical. Furthermore, if the theory $Th(G)$ is unstable, then it follows from Shelah's classification theory that the number of non-isomorphic models of a given uncountable cardinal $\kappa$ is maximal,
 which is $2^\kappa$.

In this case, the theory $Th(G)$ is "wild" or chaotic in all uncountable cardinalities. Furthermore, usually, the theory $Th(G)$ has infinitely many, and quite often uncountably many countable models. For example, the first-order theory of the Heisenberg group $UT_3(\Z)$ has uncountably many countable models. Indeed, standard arithmetic $\langle \N; +, \cdot, 0,1\rangle$ has $2^{\aleph_0}$ countable models $\widetilde{\N}$ with $\widetilde{\N} \equiv \N$, and consequently $UT_3(\N) \equiv UT_3(\widetilde{\N})$. 

Therefore, when viewed through the lens of cardinalities, the standard model-theoretic approach to models of complete theories is, in general, not very informative. A recent promising development in the first-order classification problem arises from within group theory itself. From an algebraic viewpoint, it is natural to consider all finitely generated (rather than merely countable) groups $H$ such that $G \equiv H$. To this end, for a finitely generated group $G$ we define the \emph{first-order genus} $\mathcal{FO}(G)$ as the set of  isomorphism classes $[H]$ of all finitely generated groups $H$ with $G \equiv H$. 

In \cite{ALM1} Avni, Lubotzky, and Mieri termed a finitely generated group $G$ \emph{first-order rigid} if for any finitely generated group $H$, if $G \equiv H$ then $G\simeq H$. In other words, $G$ is first-order rigid if and only if $|\mathcal{FO}(G)| =1$. It turned out that there are many first-order rigid groups, despite the fact that there are only a few countably categorical ones, and there are no infinite, finitely generated, countably categorical groups at all \cite{BCM}. Curiously, among first-order rigid groups, there are some that can be completely characterized by a single sentence in the language of group theory. Following Nies \cite{Nies1,Nies2}, a finitely generated group $G$ is termed \emph{quasi-finitely axiomatizable (QFA)} if there exists a sentence $\sigma$ in the language of group theory such that, whenever $\sigma$ holds in a finitely generated group $H$, we have $G \simeq H$. 

Note that finite groups $G$ are not only QFA but also finitely axiomatizable; that is, any group elementarily equivalent to $G$ is in fact isomorphic to $G$. Infinite finitely generated abelian groups are first-order rigid but not QFA. The former is straightforward, while the latter follows from Baur–Monk quantifier elimination for abelian groups (see, for example, \cite{Prest}). 

If $G$ is a finitely generated nilpotent group, then $\mathcal{FO}(G)$ is finite. To see this note that due to Pickel's result  from \cite{Pickel1} there are only finitely many up to isomorphism finitely generated nilpotent groups which have the same (up to isomorphism) finite quotients  as the group $G$.  This set of isomorphism classes is called the \emph{profinite genus} of $G$. Now, for a finitely generated group $H$, if $G \equiv H$, then $H$ is nilpotent and the quotients $G/G^kG_i$ and $H/H^kH_i$, where $G_i, H_i$ are $i$-th terms of the lower central series, and $G^k$ and $H^k$ are the subgroups generated by $k$-th powers of elements in  $G$ and $H$,
 are  elementarily equivalent for each $k$ and $i$ (because verbal subgroups in finitely generated nilpotent groups have finite width, hence definable in these groups by the same formulas) and finite, hence isomorphic. 
 
 Furthermore, most finitely generated nilpotent groups $G$ are QFA. Indeed,  Oger and Sabbagh showed in \cite{OgerSabbagh} that a finitely generated nilpotent group $G$ is QFA if and only if the center of $G$ is contained in the isolator of the commutant of $G$ and due to \cite{GMO} a random finitely generated nilpotent group satisfies this property.  A similar line of reasoning works for polycyclic groups $G$: Sabbagh and Wilson showed in \cite{SW} that if $H$ is a finitely generated group and $G \equiv H$ then $H$ is polycyclic; Oger established in \cite{O2} that if $G$ and $H$ are  polycyclic groups  and $G \equiv H$ then $G$ and $H$ have the same finite quotients; and Grunewald, Pickel, and Segal proved in  \cite{GPS} that there are only finitely many up to isomorphism polycyclic groups $H$  which have the same (up to isomorphism) finite quotients  as the group $G$.  Hence, for a polycyclic group $G$, the set $\mathcal{FO}(G)$ is finite. Similarly to nilpotent groups, Lasserre described in \cite{Lasserre} all QFA polycyclic groups. 

However, this line of reasoning fails for finitely generated metabelian groups, since Pickel showed in \cite{Pickel2} that there exist infinitely many pairwise non-isomorphic, finitely presented metabelian groups sharing the same finite quotients. For finitely generated metabelian groups $G$ that are not polycyclic, little is known about the conditions under which they are elementarily equivalent, about their first-order genus, or about related model-theoretic properties.

 Khelif showed in \cite{Khelif} that the metabelian Baumslag–Solitar groups $BS(1,n)$ for $n>1$, as well as the free metabelian group $M_2$ of rank 2, are QFA. In \cite{KM1}, Kharlampovich and Myasnikov proved that all non-abelian free metabelian groups $M_n$ of finite rank $n$ are QFA, and they provided a complete description of all groups (not necessarily finitely generated) that are elementarily equivalent to $M_n$. Subsequently, in \cite{DM2}, Danyarova and Myasnikov described all groups $H$ such that $H \equiv BS(1,n)$. All these results for metabelian non-polycyclic groups $G$ are obtained via a novel method based on bi-interpretations of $G$ with standard arithmetic $\mathbb{N}$, or equivalently, with the ring of integers $\mathbb{Z}$. 
 
 A more general approach to the QFA property in groups, grounded in the richness of their algebraic structure, is introduced in \cite{KMS}. An algebraic structure is called {\em rich} if its first-order theory has the same expressive power as its weak second-order theory. It was shown in \cite{KMS} that rich groups with an arithmetic multiplication table are QFA, and also that groups regularly bi-interpretable with $\mathbb{Z}$ are rich and have an arithmetic multiplication table, hence QFA.
There are many rings and groups that are bi-interpretable with $\mathbb{Z}$. Examples include finitely generated infinite integral domains \cite{AKNS}; finitely generated fields (with a few exceptions) \cite{DP};  Chevalley groups (with some exceptions) over such rings \cite{BG}; Thompson's groups $F$ and $T$ \cite{Lasserre2}; and centerless, non-uniform, irreducible higher-rank arithmetic lattices in characteristic zero \cite{AvM}. We refer to \cite{KMS,DM1} for details and further examples.
On the other hand we mention here that free non-abelian groups $F$ are not QFA; $|\mathcal{FO}(F)| = \kappa_0$; and all finitely generated groups $H$ with $F \equiv H$ have been completely described \cite{KM3,Sela}.

In this paper we prove two main results. The first is the following theorem, which answers Problem 4.3 from \cite{KMS} in the affirmative.

\begin{theorem}  \label{metab}   Let $A$ and $B$ be free abelian groups of finite ranks. Then $B\wr A$  is regularly injectively bi-interpretable  with $\mathbb Z$.
\end{theorem}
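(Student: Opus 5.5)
We prove the statement by the standard two-step scheme. First we interpret $\ZZ$ in $G=B\wr A$, and then we interpret $G$ in $\ZZ$ so that the composite isomorphisms are definable. Write $B\cong\ZZ^m$, $A\cong\ZZ^n$ and $G=M\rtimes A$, where $M=B^{(A)}\cong \ZZ[A]^m=\ZZ[x_1^{\pm1},\dots,x_n^{\pm1}]^m$ is the base group.

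\textbf{Step 1: definable pieces.} The base group is definable: $M$ is the unique maximal abelian normal subgroup. Concretely, $M$ is the union of the centralizers $C_G(g)$ over $g\in[G,G]\setminus\{1\}$, or equivalently $M=C_G([G,G])$ when $n\ge 1$, since $[G,G]=\bigoplus$ augmentation part is faithful. We then interpret the action of $A$ on $M$ via the quotient $G/M\cong A$, which is interpretable as a definable quotient. Next we interpret the Laurent polynomial ring $R=\ZZ[A]$ acting on $M$. The additive group $(M,+)$ together with the operators $u\mapsto u^{a}$ for $a\in A$ is already interpretable. To obtain the full ring action in a first-order way, we adapt the Khelif/Kharlampovich--Myasnikov trick used for free metabelian groups and $BS(1,n)$. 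Fix a basis element $e\in B\le M$ sitting at coordinate $1\in A$. The $R$-submodule $Re=\{u\in M: u\in \langle e\rangle^{\ZZ[A]}\}$ is the normal closure of $e$, and it is definable as $[e,G]\cdot\langle e\rangle$, possibly after a bounded-width argument. The map $R\to Re$, $r\mapsto e^r$, is a bijection, so $R$ as a set is $Re$. Addition in $R$ is group multiplication. For multiplication, note that $e^{r}e^{s}$ is not directly available, so we define $e^{rs}$ using conjugation by $A$ on $M$: since $M$ is a free $R$-module and $Re\cong R$, multiplication by $r$ is the unique $R$-linear endomorphism of $Re$ sending $e$ to $e^r$. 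Multiplication by a general ring element is not directly first-order, so instead we interpret $\ZZ$ and the "exponentiation" $(u,k)\mapsto u^k$ and then use the structure of $R$.

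\textbf{Step 2: interpreting $\ZZ$.} Choose a single generator $t=x_1$ of $A$. We interpret $\ZZ$ as the cyclic subgroup $\langle t\rangle$ in $G/M$, which is definable as the centralizer of something in $A$ modulo parameters. Addition is the group operation. For multiplication, the key is the element $e^{(1+t+\dots+t^{k-1})}$, which encodes $k$. Following Khelif, the pair $(t^k,\ e^{1+t+\cdots+t^{k-1}})$ is characterized by $[\,e^{(1+\dots+t^{k-1})},t\,]=e^{t^k-1}$ up to sign, and products of such "geometric sums" give multiplication via $\frac{t^{kl}-1}{t-1}=\frac{t^{kl}-1}{t^k-1}\cdot\frac{t^k-1}{t-1}$, where the first factor is obtained by substituting $t^k$ for $t$. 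This substitution is an $R$-linear map, and it can be expressed by existence of an element of $Re$ commuting appropriately. This yields $\ZZ$ interpretable with parameters in $G$. The parameters range over a definable set, for example all pairs $(e',t')$ satisfying a formula, which gives the regularity.

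\textbf{Step 3: $G$ in $\ZZ$ and the definable isomorphisms.} $G$ is computable with decidable word problem, and its elements admit a normal form as a tuple in $\ZZ^n$ (the $A$-part) together with a finitely supported function $A\to\ZZ^m$. Using Gödel coding of finite sequences, this is an injective interpretation of $G$ in $\ZZ$ with arithmetic multiplication. What remains is to show that the composite map $G\to G^{\ZZ}$ (coordinates) is definable in $G$. The crucial step is to define, uniformly in $G$, the map sending $g$ to its code, that is, to express the finitely supported $B$-valued function of $u\in M$ in terms of the interpreted $\ZZ$. We do this by showing that "$u=\prod_{i} (b_i)^{a_i}$ with exponent sequence coded by $s$" is definable by induction over the interpreted $\ZZ$, using the weak second-order power: iterated products of length $k$ are captured via partial products in $M$, available because we can quantify over elements of $M$ encoding sequences along $t$-translates. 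We expect this coding-of-sequences step to be the main obstacle, especially handling $n\ge2$. For $n\ge 2$ we would reduce to $n=1$ by treating $M$ as a module over $\ZZ[x_1^{\pm1}]$, with the other coordinates handled by repeating the argument for each generator. The reverse composite $\ZZ\to\ZZ^{G}$ is then definable in $\ZZ$ by arithmetic recursion, which is routine.
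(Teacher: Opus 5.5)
\textbf{There is a genuine gap, in Step~3.} Your interpretation of $G$ in $\ZZ$ by normal forms, and the composite coordinate map on the $\ZZ$ side, are fine and match the paper. The other composite is what is missing. You need a formula defining in $G$ the map sending $g$ to its coordinates (top exponents and codes $\nu(P_j)$), written in the copy of $\ZZ$ interpreted in $G$, and you leave exactly this open.

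\begin{itemize}
\item Appealing to the ``weak second-order power'' of $G$ is circular. Richness is what one derives \emph{from} regular bi-interpretability with $\ZZ$, and you give no independent argument for it.
\item ``Partial products along $t$-translates'' is never turned into a formula. Nothing you set up relates $u=e^{P}\in M$ to the arithmetic code of $P$; in particular you have no way to extract a coefficient of $P$.
\item For $\mathrm{rk}A\ge2$, the reduction to $\ZZ[x_1^{\pm1}]$-modules fails, because $M$ has infinite rank over that ring.
\item Even the $\ZZ$-exponentiation $(u,k)\mapsto u^k$ that you invoke is never defined.
\end{itemize}

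The paper supplies two ideas here.
\begin{itemize}
\item $\ZZ$-powers are uniformly definable with $k$ given as $a^k$. One has $(au)^k=a^kw$, where $w$ is the unique solution of $[u,a^k]=[w,a]$. Then $u^k$ is the unique $w\in M$ with $(a'u)^k\in a'^k\,w\,[M,a']$ for all $1\ne a'\in A$. The passage $a^k\mapsto a'^k$ is definable through the lower central quotient $G_3/G_4$ (Lemma~\ref{le:6}, Theorem~\ref{th:2}).
\item $\ZZ[x_1,\dots,x_n]$ is discriminated into $\ZZ$ by evaluations. Hence for $g,h\in M$, $g^P=h$ iff $g^{P(\bar\alpha)}h^{-1}\in\{\prod_i g_i^{x_i-\alpha_i}: g_i\in M\}$ for every $\bar\alpha$ (Lemma~\ref{le:11}). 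Since $P(\bar\alpha)$ is an arithmetic function of $\nu(P)$ and $\bar\alpha$, it can be evaluated inside the interpreted $\ZZ$. This gives the $\ZZ A$-action indexed by codes (Theorem~\ref{th:3}) and hence the coordinate map (Corollary~\ref{co:9}).
\end{itemize}

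\textbf{The parts you do sketch also need repair.}
\begin{itemize}
\item In Step~2, $\langle t\rangle$ is not a centralizer once $\mathrm{rk}A\ge2$: in $G/M$ every centralizer is the whole group, and in $G$ one has $C_G(t)=A$. What you need is Lemma~\ref{le:4}: for $g\in C_G(t)$, the condition $[e,g]\in[M,t]$ (i.e.\ $t-1\mid g-1$ in $\ZZ A$) forces $g\in\langle t\rangle$.
\item Working in $G/M$ also gives up the injectivity that the theorem asserts.
\item Your multiplication step fails. The substitution $t\mapsto t^k$ is a ring endomorphism of $\ZZ[t^{\pm1}]$, not an $R$-linear map, and nothing in $G$ induces it. Instead, define divisibility by $[e,t^\ell]\in[M,t^k]\iff k\mid\ell$ (cf.\ Lemma~\ref{le:5}), and use Robinson's definition of multiplication from $+$ and $\mid$.
\item In Step~1, $[e,G]\langle e\rangle=\{e^{a-1+k}: a\in A,\ k\in\ZZ\}$ is far from the normal closure of $e$.
\item Regularity is more than ``the parameters range over a definable set.'' You need a parameter-free formula all of whose realizations make the \emph{same} formulas interpret $\ZZ$ and define the composite coordinate map. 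Arbitrary pairs $(e',t')$ do not work: for $e'=e^{t-1}$ and $\mathrm{rk}A\ge2$, the cyclic-subgroup formula defines all of $C_G(t)=A$. This is why the paper proves that the set of bases of $G$ is $0$-definable (Lemma~\ref{le:basis}, Theorem~\ref{th:4}).
\end{itemize}
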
 
  The proof of  Theorem \ref{metab} is divided into four parts: interpretability of $G$ in $\Z$ in Section \ref{se:3.1}, interpretability of $\Z$ in $G$ in Section \ref{se:3.3}, bi-interpretability of $G$ and $\Z$ with parameters, and finally, regular bi-interpretability of $G$ and $\Z$ in Sections \ref{se:4}, \ref{se:5}.

\begin{cor} Let $A$ and $B$ be free abelian groups of finite ranks. Then $B\wr A$ is QFA. 
    \end{cor}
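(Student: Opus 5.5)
The plan is to obtain the corollary directly from Theorem \ref{metab} together with the general machinery of \cite{KMS} recalled in the introduction. By Theorem \ref{metab}, $G = B\wr A$ is regularly (injectively) bi-interpretable with $\mathbb Z$. The result quoted from \cite{KMS} says that a group regularly bi-interpretable with $\mathbb Z$ is rich and has an arithmetic multiplication table, and that rich groups with an arithmetic multiplication table are QFA. Composing these two facts gives that $G$ is QFA. Nothing beyond Theorem \ref{metab} and the \cite{KMS} results needs to be proved.

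It is still worth recording the mechanism, so the reader sees why regularity is the relevant hypothesis. First I would fix finite generators $g_1,\dots,g_k$ of $G$ (for instance the standard generators of $A$ and of the base copy of $B$). Regular bi-interpretability provides two formulas. One, $\phi(\bar x)$, defines an orbit of parameter tuples. The other defines, uniformly in any $\bar p$ satisfying $\phi$, a copy $\mathbb Z_{\bar p}$ of $\mathbb Z$ inside $G$, together with a definable isomorphism between $G$ and its interpretation in $\mathbb Z_{\bar p}$.

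Next I would build the sentence $\sigma$. It would be the conjunction of the following statements:
\begin{enumerate}
\item some tuple satisfies $\phi$;
\item the interpreted structure $\mathbb Z_{\bar p}$ satisfies a finite list of axioms, for example those of a discretely ordered ring;
\item the interpreted group inside $\mathbb Z_{\bar p}$ is definably isomorphic to the ambient group;
\item the images of the interpreted codes of $g_1,\dots,g_k$ generate the ambient group. This is expressible because, in the interpretation, words in generators are coded by integers.
\end{enumerate}

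Finally, let $H$ be a finitely generated group satisfying $\sigma$. The interpreted ring $\mathbb Z_{\bar p}^H$ contains a standard part. Finite generation of $H$, together with the coded generation statement, forces the relevant elements to be standard. This yields $H \simeq G$.

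The only delicate step is this last one: showing that finite generation rules out nonstandard models of the interpreted arithmetic. This is exactly the content of the arithmetic multiplication table argument in \cite{KMS}, which I would cite rather than reprove. So the main obstacle lies entirely in Theorem \ref{metab}, whose proof occupies Sections \ref{se:3.1}--\ref{se:5}; the corollary itself is a direct application.
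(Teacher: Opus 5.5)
Your proposal is correct and follows the paper's route exactly. The paper also reads the corollary off from Theorem \ref{metab}, combined with the result of \cite{KMS} that groups regularly bi-interpretable with $\mathbb Z$ are rich with an arithmetic multiplication table, hence QFA; your extra sketch of the mechanism is only heuristic, but since you cite \cite{KMS} for the step where finite generation forces standardness, nothing is missing.
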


Theorem \ref{metab} and results from \cite{DM1}, Section 5.8 imply the following.

\begin{cor} Let $A$ and $B$ be free abelian groups of finite rank. Then $B\wr A$ is rich, prime, atomic, homogeneous, admits elimination of imaginaries with parameters, and for any group $H$, one has $G \equiv H$ if and only if $H = \Gamma(\widetilde{\Z})$, where $G = \Gamma(\Z)$ is the  interpretation of $G$ in $\Z$ from Theorem \ref{metab} and $\widetilde{\Z}$ is an arbitrary ring with $\widetilde{\Z} \equiv \Z$.
\end{cor}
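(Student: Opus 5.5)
The plan is to derive the corollary directly from Theorem \ref{metab}, using the general machinery of regular bi-interpretability with $\Z$ developed in \cite{DM1}, Section 5.8, and \cite{KMS}. Write $G = B\wr A$. By Theorem \ref{metab}, $G$ is regularly injectively bi-interpretable with $\Z$. Let $G \cong \Gamma(\Z)$ be the interpretation of $G$ in $\Z$, and let $\Delta(G,\bar p)$ be the interpretation of $\Z$ in $G$, with parameters $\bar p$ ranging over a $\emptyset$-definable set. Regularity means the composite coordinate maps $\Z \to \Delta(\Gamma(\Z))$ and $G \to \Gamma(\Delta(G,\bar p))$ are uniformly definable by formulas that work for every admissible $\bar p$.

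\textbf{Model-theoretic properties.} These transfer from $\Z$, where they are classical. $\Z$ is rich, since weak second-order arithmetic is coded via finite sequences by G\"odel's $\beta$-function. $Th(\Z)$ is prime, atomic and homogeneous, since every element is $\emptyset$-definable. $\Z$ has elimination of imaginaries, since definable equivalence relations have definable least representatives. Each property passes to $G$ through the definable bijection $G \to \Gamma(\Delta(G,\bar p))$, with the parameters $\bar p$ entering only where the statement allows them; this is exactly why elimination of imaginaries holds "with parameters." For prime, atomic and homogeneous, I will check that types of tuples in $G$ correspond, via the coordinate map, to types of tuples in $\Z$ together with the type of $\bar p$. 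Since $\bar p$ ranges over a definable set, realisations of the same type are related by automorphisms.

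\textbf{Classification of $H \equiv G$.} This is the main step. One direction: if $\widetilde{\Z}\equiv\Z$, then $\Gamma(\widetilde{\Z}) \equiv \Gamma(\Z)=G$, because interpretations translate sentences uniformly. For the converse, suppose $H \equiv G$. The following hold in $G$ and are first-order sentences, hence also hold in $H$:
\begin{enumerate}
\item the parameter set defining $\Delta$ is nonempty;
\item for every $\bar p$ in it, $\Delta(\cdot,\bar p)$ satisfies any given axiom of $Th(\Z)$;
\item for every $\bar p$ in it, the defining formula of the regularity isomorphism $G \to \Gamma(\Delta(G,\bar p))$ defines an isomorphism.
\end{enumerate}
Choose $\bar q$ in $H$ satisfying the parameter formula and set $\widetilde{\Z} = \Delta(H,\bar q)$. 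Then $\widetilde{\Z} \equiv \Z$ by (2), and $H \cong \Gamma(\widetilde{\Z})$ by (3).

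\textbf{Main obstacle.} The delicate point is that the uniformity in regularity is genuinely needed: the isomorphism formula must be one formula valid for all parameters. Injectivity of the bi-interpretation ensures that $\Gamma$ has no quotient by a definable equivalence relation, so $H$ is literally a definable structure on tuples of $\widetilde{\Z}$. Once these points are verified against the precise definitions in \cite{DM1}, all listed properties follow.
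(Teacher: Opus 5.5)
Your proposal is correct and takes the same route as the paper, which derives this corollary directly from Theorem \ref{metab} together with the general consequences of regular bi-interpretability with $\Z$ in \cite{DM1}, Section 5.8. Your sketches are what those general results encode, in particular the transfer to $H$ of the sentences saying that $\Delta(\cdot,\bar p)$ satisfies each sentence of $Th(\Z)$ and that the uniform coordinate formula defines an isomorphism with $\Gamma(\Delta(\cdot,\bar p))$. One small correction: for atomicity and homogeneity, the automorphism relating two realisations comes from composing the coordinate maps for two admissible tuples $\bar p,\bar p'$ through the unique isomorphism $\Delta(G,\bar p)\cong\Z\cong\Delta(G,\bar p')$, not merely from $\bar p$ ranging over a definable set.
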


Our second result shows (see below) that there exists a finite central extension $G$ of $\ZZ^2 \wr \ZZ$ such that  $|\mathcal{FO}(G)| = 2^{\aleph_0}$.  To state the second theorem, we need some notation and terminology.

Recall that the space of finitely generated marked groups $\G$ can be informally defined as the set of all pairs $(G,A)$, where $G$ is a group and $A$ is an ordered finite generating set of $G$, endowed with the topology induced by local convergence of Cayley graphs. Given a finitely generated group $G$, we denote by $[G]\subseteq \G $ its \emph{isomorphism class}; that is,
$$
[G]=\{ (H,B)\in \G\mid H\cong G \}.
$$

The following definition is inspired by connections between the topological properties of isomorphism classes in $\G$ and model theory (see \cite{Osi21a}).

\begin{definition}
A finitely generated group $G$ is said to be \emph{condensed} if $[G]$ has no isolated points.
\end{definition}

We prove the following theorem.

\begin{theorem}\label{main}
There exists a condensed group $G$ that splits as a central extension of the form
\begin{equation}\label{ext}
1\longrightarrow \ZZ_2 \longrightarrow G \longrightarrow \ZZ^2 \wr \ZZ  \longrightarrow 1.
\end{equation}
\end{theorem}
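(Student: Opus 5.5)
Let $W=\ZZ^2\wr\ZZ$ with base $M=\bigoplus_{i\in\ZZ}\ZZ^2$, and let $t$ be the generator of the active $\ZZ$. The plan is to build a continuum of pairwise non-isomorphic central $\ZZ_2$-extensions of $W$ whose marked versions accumulate on one another, and then pick one lying in a perfect set.

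\textbf{Step 1: parametrizing the extensions.} Central extensions of $W$ by $\ZZ_2$ are classified by $H^2(W,\ZZ_2)$. They can be built from $\ZZ_2$-valued alternating bilinear forms on $M$ that are invariant under the $t$-action. Write the basis of the $i$-th copy of $\ZZ^2$ as $x_i,y_i$. For a sequence $\e=(\e_k)_{k\ge 0}\in\{0,1\}^{\NN}$, define the commutator of the lifts by
$$[\tilde x_i,\tilde y_j]=z^{\e_{|i-j|}},$$
with all other basic commutators among lifts trivial except those forced by antisymmetry, and with $z$ central of order $2$. This form is shift invariant, so $t$ acts on the resulting central extension $N_\e$ of $M$ by $\ZZ_2$. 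Hence $G_\e=N_\e\rtimes\langle t\rangle$ is a central extension of $W$ by $\ZZ_2=\langle z\rangle$, generated by $t,\tilde x_0,\tilde y_0$ (and $z$ whenever $\e\neq 0$). The defining relations are $z^2=1$, $z$ central, and $[\tilde x_0,t^k\tilde y_0t^{-k}]=z^{\e_{|k|}}$, together with the remaining commutation relations in the base.

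\textbf{Step 2: convergence.} In the space of marked groups, $(G_\e,(t,\tilde x_0,\tilde y_0,z))$ depends continuously on $\e$. A ball of radius $r$ in the Cayley graph involves only commutators $[\tilde x_0,\tilde y_0^{t^k}]$ with $|k|\le r$, so it is determined by $\e_0,\dots,\e_r$. The map $\e\mapsto G_\e$ is therefore continuous from the Cantor set into $\G$. The marked groups $G_{\e'}$, where $\e'$ agrees with $\e$ on a long initial segment and differs later, converge to $G_\e$ and are distinct as marked groups.

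\textbf{Step 3: condensedness and the main obstacle.} To get non-isolated points of $[G]$, we need markings of a single group converging to a given marking of it. The plan is to choose $\e$ so that $G_\e\cong G_{\e'}$ for a family of $\e'$ accumulating at $\e$. Candidates are:
\begin{itemize}
\item sequences $\e$ in which every finite pattern recurs, such as a uniformly recurrent or generic sequence;
\item isomorphisms induced by automorphisms of $W$ that re-choose the base generators, for example $x_0\mapsto x_0\,u$ with $u$ supported far away.
\end{itemize}
Under such a change of generators the new form's sequence agrees with $\e$ near $0$ but changes at distance roughly $n$. This yields markings of $G_\e$ converging to the standard one. Concretely, I would replace $\tilde x_0$ by $\tilde x_0\cdot t^n\tilde x_0t^{-n}$, or apply an automorphism of $\ZZ^2\otimes\ZZ[t^{\pm1}]$ given by multiplication by a unit-like element. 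Equivalently, I would show directly that $(G,X_n)\to(G,X)$, where $X_n$ is obtained by twisting one generator by an element of $M$ supported near $\pm n$. Its commutators with $t$-conjugates of $\tilde y_0$ change only the entries $\e_k$ with $k\approx n$.

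The hard part is verifying that the twisted tuple still generates $G$. The new elements must generate the whole base module, and $x_0+x_n$ does not generate $\ZZ[t^{\pm1}]$. I would therefore twist by an element of the form $x_0\mapsto x_0+y_n$, possibly mixing coordinates through the rank-$2$ structure, or use an automorphism of $\ZZ[t^{\pm 1}]^2$ given by an elementary matrix $\begin{pmatrix}1&t^n\\0&1\end{pmatrix}$. Such a matrix is invertible, so the twisted tuple generates $G$. If this fails, the fallback is to choose $\e$ generically, so that the set of $\e'$ with $G_{\e'}\cong G_\e$ is dense near $\e$.

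Pairwise non-isomorphism (giving $2^{\aleph_0}$ groups) is then obtained by counting: isomorphism classes within this family are countable, while the Cantor set is uncountable.
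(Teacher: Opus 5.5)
Your Steps 1 and 2 are fine. They are the special case $S=\{k\in\ZZ : \e_{|k|}=1\}$ of the paper's groups $G_S$, and Step 2 is the continuity and injectivity of the map $f$ in Lemma~\ref{Lem:4}. The gap is in Step 3, which carries the whole content of the theorem, and the mechanism you propose does not behave as you claim.

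Take the convention $t^k\tilde x_jt^{-k}=\tilde x_{j+k}$, $t^k\tilde y_jt^{-k}=\tilde y_{j+k}$, and re-mark by $x_0'=\tilde x_0\tilde y_n$ (your elementary matrix). Bilinearity of commutators gives $[x_0',\tilde y_k]=z^{\e_{|k|}}$. However, $[x_0',t^kx_0't^{-k}]=[\tilde x_0,\tilde y_{n+k}][\tilde y_n,\tilde x_k]=z^{\e_{|n+k|}+\e_{|n-k|}}$. For small $k$ this value is read off from $\e$ near $n$, not near $0$. So the re-marked group is generally not one of your $G_{\e'}$ at all, because the $x$'s stop commuting. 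It is close to the standard marking only if $\e_{n+k}=\e_{n-k}$ for all $|k|\le r$, and you never impose such a condition. For the twist $\tilde x_0\mapsto\tilde x_0\tilde x_n$, even $[x_0',\tilde y_k]$ changes, to $z^{\e_{|k|}+\e_{|n-k|}}$.

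Your fallback (choose $\e$ so that $\{\e' : G_{\e'}\cong G_\e\}$ is dense near $\e$) just restates the goal. You never exhibit an isomorphism between distinct members of your family, and with the symmetric parametrization $\e_{|i-j|}$ there is no evident one. Moreover, even a correct Step 3 would only show that \emph{one} marking of $G_\e$ is non-isolated in $[G_\e]$. Condensedness requires that no point of $[G_\e]$ be isolated, and you do not address this.

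The missing idea, as used in the paper, is the shift symmetry. Parametrize by a two-sided set $S\subseteq\ZZ$, so that $[a_i,b_j]$ is the central element of order $2$ when $j-i\in S$ and trivial otherwise. The re-marking $a\mapsto a$, $b_0\mapsto b_n$, $t\mapsto t$ identifies $G_S$ with $G_{n\circ S}$ (Corollary~\ref{Cor:iso}). Hence $f$ maps each orbit of the shift on $2^\ZZ$ into a single isomorphism class. Take $S$ with a dense orbit; that orbit has no isolated points. Since $f$ is continuous and injective, the image of the orbit is a subset of $[G_S]$ without isolated points. Osin's dichotomy \cite[Corollary 6.1]{Osi21a} (an isomorphism class in $\G$ is either discrete or has no isolated points) then shows that $G_S$ is condensed.

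Your own family can be rescued the same way. Re-marking $\tilde y_0\mapsto\tilde y_n$ gives $[\tilde x_0,t^k\tilde y_nt^{-k}]=z^{\e_{|n+k|}}$. If $\e$ contains every finite binary word, choose $n$ at occurrences of the palindrome $\e_r\cdots\e_1\e_0\e_1\cdots\e_r$. This produces markings of $G_\e$ converging to the standard one, and they are distinct from it because such an $\e$ is not periodic. You would still need the dichotomy at the end.
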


There are three immediate consequences of this result. The first one concerns the complexity of the isomorphism relation. Recall that an equivalence relation $E$ on a topological space $X$ is called \emph{smooth} if there is a Polish space $P$ and a Borel map $\beta \colon X\to P$ such that for any $x,y\in X$, we have $xEy$ if and only if $\beta(x)=\beta(y)$. 

In \cite{W}, Williams proved that the isomorphism relation on the space of  $3$-step solvable, finitely generated, marked groups is non-smooth. We strengthen this result as follows.

\begin{cor}\label{Cor:non-smooth}
The isomorphism relation on the space of center-by-metabelian, finitely generated, marked groups is not smooth.
\end{cor}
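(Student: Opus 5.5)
The plan is to derive Corollary~\ref{Cor:non-smooth} from Theorem~\ref{main} by a Baire category argument in the space of marked groups. Let $G$ be the condensed group from Theorem~\ref{main}. Since $G$ is a central extension of $\ZZ^2\wr\ZZ$ by $\ZZ_2$, it is center-by-metabelian. Let $X\subseteq\G$ be the closure of $[G]$. I first check that $X$ lies in the subspace of center-by-metabelian marked groups. If this is awkward, I can instead take $X$ to be the closure of $[G]$ inside the variety defined by the law $[[[x,y],[z,t]],u]=1$. This law holds in $G$ and is a closed condition in $\G$, so $X$ sits in the relevant space. It is also compact, hence Polish. The isomorphism relation on this space, restricted to $X$, is still smooth if the original relation is smooth, since restricting a Borel reduction to a closed subspace keeps it Borel. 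So it suffices to show that the isomorphism relation restricted to $X$ is not smooth.

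Next I show that $[G]$ is a dense $G_\delta$ subset of $X$. Density holds by construction. For the $G_\delta$ property, I use that $G$ is finitely presented relative to the variety. The group $\ZZ^2\wr\ZZ$ is finitely presented in the variety of metabelian groups (Baumslag), so $G$ is finitely presented in the center-by-metabelian variety. Marked groups in the variety that satisfy a given finite set of relations form a clopen set. Among them, those isomorphic to $G$ are the ones in which, in addition, a fixed finite set of words does not vanish. This is an open condition in a neighbourhood, via Hopficity-type arguments for finitely generated metabelian-by-finite groups. If that argument is delicate, I use the general fact that the isomorphism class of any finitely generated group is Borel, and that in a Polish space an orbit-like class dense in its closure is comeager whenever it is Borel and has the Baire property in every open set. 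Each isomorphism class $[H]$ is invariant under the Polish group action given by Nielsen moves, or equivalently under the countable group of change-of-marking transformations.

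The contradiction then runs as follows. Suppose $\beta\colon X\to P$ is a Borel reduction. Isomorphism on $\G$ is the orbit equivalence relation of a countable group $\Gamma$ acting by homeomorphisms, namely change of generating tuples via substitutions with inverses. The class $[G]$ is dense in $X$, so the action of $\Gamma$ on $X$ has a dense orbit. By the standard dichotomy for generically ergodic actions, every $\Gamma$-invariant Borel map into a Polish space is constant on a comeager set. So $\beta$ is constant on a comeager set $C\subseteq X$. Smoothness then forces $C$ to lie in a single isomorphism class. The class $[G]$ is itself comeager, being a dense $G_\delta$, so $C\cap[G]\ne\emptyset$, and hence $C\subseteq[G]$ up to a meager set. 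But $[G]$ is countable, since a finitely generated group has only countably many markings. Because $G$ is condensed, $[G]$ has no isolated points, and so $X$ is a perfect Polish space. In a perfect Polish space countable sets are meager. This contradicts $[G]$ being comeager. Note that this argument needs only that $[G]$ is dense, countable and perfect, not the $G_\delta$ property: a countable set cannot contain a comeager set $C$ in a perfect space.

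The main obstacle is making the space $X$ well-defined and perfect. Condensedness says that no point of $[G]$ is isolated in $[G]$, which also makes every point of the closure non-isolated in $X$. The rest is the generic ergodicity lemma, which I would quote as standard (Kechris).
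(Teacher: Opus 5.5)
Your argument is, in substance, the paper's. The paper applies \cite[Proposition 2.7]{Osi21a} (a closed, isomorphism-invariant subset of $\G$ containing a condensed group has non-smooth isomorphism relation) to the closed, isomorphism-invariant set of center-by-metabelian marked groups. Your generic-ergodicity argument on $X=\overline{[G]}$ is a self-contained proof of that direction of Osin's proposition.

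The argument is correct only after you delete your second paragraph, which is not just unnecessary but false. Since $[G]$ is countable and $X$ is a perfect Polish space, $[G]$ is meager in $X$. A dense $G_\delta$ in a Polish space is comeager, so by the Baire category theorem $[G]$ cannot be a dense $G_\delta$ in $X$. The supporting claims fail too:
\begin{itemize}
\item Your $G=G_S$ is not finitely presented in the center-by-metabelian variety. Otherwise, since $(G_T,\{a,b_0,t\})\to(G_S,\{a,b_0,t\})$ as $T\to S$ by Lemma~\ref{Lem:4}, the groups $G_T$ with $T$ near $S$ would be marked quotients of $G_S$. That gives $Z_S\le Z_T$, hence $Z_S=Z_T$ (both have index $2$ in $Z(G)$ once $T\neq\emptyset$), hence $T=S$.
\item Describing $[G]$ locally by finitely many relations plus finitely many non-vanishing words would make $[G]$ open in $X$. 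That is impossible for a countable subset of a perfect Polish space.
\end{itemize}
The contradiction as written in your third paragraph ($[G]$ comeager versus $[G]$ countable) never uses the reduction $\beta$, which is the warning sign. The correct ending is the one in your final remark. The map $\beta$ is constant on a comeager $C\subseteq X$, so $C\subseteq[H]$ for a single finitely generated $H$. But $[H]$ is countable, hence meager in $X$, contradicting the Baire category theorem.

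Two smaller corrections.
\begin{itemize}
\item $X$ is not compact. The set $[G]$ meets $\G_n\setminus\G_{n-1}$ for all large $n$, while a compact subset of $\G$ lies in a single $\G_n$, because the $\G_n$ form an increasing open cover. However, $X$ is closed in the Polish space $\G$, hence Polish, and that is all you need.
\item For the countable group, take the direct limit of the groups $\Aut(F_n)$, with $\Aut(F_n)$ acting on each $\G_k$, $k\ge n$, via $\Aut(F_n)\le\Aut(F_k)$. Nielsen moves inside a single $\G_n$ only realize Nielsen equivalence of generating $n$-tuples; stabilization by trivial generators is needed to reach every marking of an isomorphic group.
\end{itemize}
Then say explicitly that $X$, as the closure of an invariant set, is invariant. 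So $[G]$ is a dense orbit of a countable group of homeomorphisms of $X$, and \cite[Theorem 8.46]{Kec} applies.
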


Informally, our result means that finitely generated center-by-metabelian groups cannot be ``explicitly classified" up to isomorphism using invariants from a Polish space. Note that Corollary \ref{Cor:non-smooth} is optimal in a certain sense. Indeed, the isomorphism relation on the space of finitely generated marked metabelian groups is obviously smooth, as the latter space is countable. 

Another corollary answers the second part of Question 7.2 in \cite{Osi21a}. 

\begin{cor}\label{Cor:Non-Geom}
  The property of being condensed is not geometric, i.e., not stable under quasi-isometry of finitely generated groups.  
\end{cor}

The last corollary sharply contrasts the fact that $\ZZ^2\wr \ZZ$ is QFA.

\begin{cor}\label{Cor:EE}
There exist $2^{\aleph_0}$ finitely generated, pairwise non-isomorphic, elementarily equivalent central extensions of the form (\ref{ext}). 
\end{cor}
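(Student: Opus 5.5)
The plan is to derive Corollary \ref{Cor:EE} from Theorem \ref{main} with a Baire category argument in the space of marked groups, following the topological zero-one law for isomorphism-invariant properties in \cite{Osi21a}. Note that the corollary only asks for $2^{\aleph_0}$ pairwise non-isomorphic groups that are elementarily equivalent \emph{to each other}. They need not be equivalent to the group $G$ of Theorem \ref{main}, and I will not try to prove that.

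Fix the condensed group $G$ from Theorem \ref{main}, a number $k$ of generators, and let $X$ be the closure of $[G]$ inside the clopen subspace $\G_k$ of $k$-generated marked groups. Then $X$ is a compact Polish space.
\begin{enumerate}
\item \emph{$X$ is perfect.} Points of $[G]$ are not isolated because $G$ is condensed. Any other point of $X$ is a limit of points of $[G]$, hence also not isolated.
\item \emph{Topological transitivity.} Isomorphism on $\G_k$ is the orbit relation of a countable group $\Gamma$ of homeomorphisms (Nielsen transformations of the generating $k$-tuple, together with changes of relators). So $X$ is $\Gamma$-invariant and carries a dense orbit, namely $[G]$.
\item \emph{Zero-one law.} For a continuous action of a countable group on a Polish space with a dense orbit, every invariant set with the Baire property is meager or comeager. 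For each sentence $\sigma$, the set $\{(H,B)\in X : H\models\sigma\}$ is Borel and isomorphism-invariant. Hence it is meager or comeager.
\item \emph{A generic theory.} There are only countably many sentences, so intersecting the comeager sets over all $\sigma$ gives a comeager set $C\subseteq X$ on which all groups have the same complete theory $T$.
\end{enumerate}

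Now count isomorphism classes inside $C$. Each isomorphism class in $\G_k$ is countable, since a finitely generated group has countably many $k$-tuples. A comeager subset of a nonempty perfect Polish space has cardinality $2^{\aleph_0}$. Therefore $C$ meets $2^{\aleph_0}$ distinct isomorphism classes, and all the corresponding groups are elementarily equivalent, each having theory $T$.

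It remains to check that every group in $X$, or at least in a comeager subset of $X$, is a central extension of the form (\ref{ext}). This is the step I expect to be the main obstacle, because it depends on the specific structure of $G$ and not just on condensedness. My approach is as follows.
\begin{itemize}
\item Most of the required properties are expressed by universal sentences true in $G$, and universal sentences pass to limits in $\G_k$. These include: $G$ is center-by-metabelian; it has a central element $z$ of order $2$ with $G/\langle z\rangle$ metabelian; and the relators of a presentation of $\ZZ^2\wr\ZZ$ in the variety of metabelian groups hold modulo $\langle z\rangle$.
\item I would use that $\ZZ^2\wr\ZZ$ is finitely presented as a metabelian group. Then for a marked limit $(H,B)$, the quotient $H/\langle z\rangle$ is a marked metabelian quotient of $\ZZ^2\wr\ZZ$ agreeing with it on larger and larger balls.
\item Metabelian groups satisfy max-n (Hall). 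Combined with the fact that the relevant marked quotients of $\ZZ^2\wr\ZZ$ form a neighbourhood of $\ZZ^2\wr\ZZ$ in $\G_k$, this should force $H/\langle z\rangle\cong\ZZ^2\wr\ZZ$, with kernel exactly $\langle z\rangle\cong\ZZ_2$.
\end{itemize}
If this works, every point of $X$ is an extension of the form (\ref{ext}), and in particular every group in $C$ is. This finishes the proof.

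If full closure fails, I would instead show that the set of such extensions is a $G_\delta$ invariant subset of $X$ containing $[G]$. By the zero-one law it is then comeager, and intersecting it with $C$ gives the same conclusion.
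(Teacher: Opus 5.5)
\paragraph{The gap: the closure of $[G]$ is not made of extensions of the form (\ref{ext}).}
Your Baire-category framework is sound: perfectness of $X$, a zero-one law on $X$, the generic complete theory, and the count of isomorphism classes. Step 2 is mis-justified, though. Nielsen moves do not generate the isomorphism relation on $\G_k$, because a group can have non-Nielsen-equivalent generating $k$-tuples. The claim itself is still true, for instance via Tietze-type partial homeomorphisms between clopen sets; compare the zero-one law of \cite[Theorem 2.2]{Osi21a} for isomorphism-invariant sets.

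The real gap is your last step. ``There is a central element $z$ of order $2$ with $G/\langle z\rangle$ metabelian'' is an existential sentence, not a universal one, so it need not survive passage to limits. The word representing $z$ depends on the marking and can escape to infinity. Concretely, take the group $G=G_S$ of Theorem \ref{main}, where $S$ has dense $\ZZ$-orbit, and choose shifts $n_j\circ S\to\emptyset$. By Corollary \ref{Cor:iso} and Lemma \ref{Lem:4}, the points $(G_{n_j\circ S},\{a,b_0,t\})\in[G]$ converge to $(G_\emptyset,\{a,b_0,t\})\cong\ZZ^2\wr\ZZ$, which is centerless. So $X$ is not contained in the class $\mathcal E$ of extensions of the form (\ref{ext}), and your primary plan fails.

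Your fallback does not close the gap as stated. A priori $\mathcal E$ is a countable union, over words $w$, of sets where ``$w$ is a central involution and the quotient by it is $\ZZ^2\wr\ZZ$''. That makes it only $G_{\delta\sigma}$, and you give no reason it should be $G_\delta$. Nor can you feed the zero-one law with $\mathcal E\supseteq[G]$: $[G]$ is countable, hence meager in the perfect space $X$.

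\paragraph{How to repair your route.}
What is missing is a \emph{local} version of your third bullet.
\begin{enumerate}
\item Fix $(G,A)\in[G]\cap\G_k$ and a word $w$ with $w(A)=z$.
\item Fix finitely many relators $r_1,\dots,r_\ell$ presenting $\ZZ^2\wr\ZZ$ on the image $\bar A$ within the variety of metabelian groups.
\item Let $U$ be the clopen set of $(H,B)$ such that $w(B)$ is a central involution and $r_i(B)\in\{1,w(B)\}$ for all $i$.
\item Let $(H,B)\in U\cap X$ be a limit of $(G,A_m)$. For large $m$ you get $w(A_m)=z$, since $Z(G)=\langle z\rangle$.
\item Hence $\bar A\mapsto\bar A_m$ induces a surjective endomorphism of $\ZZ^2\wr\ZZ$. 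It is an automorphism, because finitely generated metabelian groups are Hopfian.
\item For each word $v$ you then have $v(\bar B)=1$ iff $v(B)\in\{1,w(B)\}$ iff $v(\bar A_m)=1$ for large $m$ iff $v(\bar A)=1$. So $(H/\langle w(B)\rangle,\bar B)\cong(\ZZ^2\wr\ZZ,\bar A)$.
\end{enumerate}
Thus $\mathcal E\cap X$ contains an open dense subset of $X$, and intersecting it with your comeager set $C$ finishes the proof. Max-n alone would not do this, since $\ZZ^2\wr\ZZ$ is a limit of its finite quotients. The work is done by Hopficity together with approximation by markings of $G$ itself.

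\paragraph{Comparison with the paper.}
With this repair your argument is a genuine alternative to the paper's. The paper never looks at $\overline{[G]}$. It works on the explicit Cantor set $f(2^\ZZ)$, whose points with $S\neq\emptyset$ are of the form (\ref{ext}) by Lemma \ref{Lem:2}. It gets transitivity from the shift via Proposition \ref{Prop:EE}. This avoids any analysis of limit groups, at the price of a zero-one law for a set that is not isomorphism-invariant.
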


\section{Preliminaries}

\subsection{Interpretability and bi-interpretability}
\label{se:2.1} 

One can use the model-theoretic notion of interpretability and bi-interpretability to study structures
elementarily equivalent to a given one. In this paper we are going to do this for the restricted wreath product
${\mathbb Z}^n\wr {\mathbb Z}^m$.
We remind here some precise definitions and several known facts that may not be very familiar to algebraists.

Let $\mathbb{B} = \langle B ; L\rangle$ be a structure. A subset $A \subseteq B^n$ is called {\em definable} in $\mathbb{B}$ if there is a formula $\phi(x_1, \ldots,x_n)$ (without parameters) in $L(\mathbb{B})$ such that  $$A = \{(b_1,\ldots,b_n) \in B^n \mid \mathbb{B} \models \phi(b_1, \ldots,b_n)\}.$$ In this case we denote $A$ by $\phi(B^n)$ or $\phi(\MB)$ and  say that  \emph{$\phi$ defines $A$} in $\mathbb{B}$.  Similarly, an operation $f$ on the subset  $A$ is definable in $\mathbb{B}$ if its graph is definable in $\mathbb{B}$.  A constant $c$ is definable if the relation $x=c$ is definable. An $n$-ary predicate $P(x_1,\ldots ,x_n)$ is definable in $\mathbb{B}$ if the set $\{(b_1,\ldots ,b_n)\in\mathbb{B}^n| P(b_1,\ldots ,b_n) \ { \rm is\  true}\}$
is definable in $\mathbb{B}$.

In the same vein  an algebraic structure $\mathbb{A} = \langle A ;f, \ldots, P, \ldots, c, \ldots\rangle$  is definable in $\mathbb{B}$ if there is a definable subset $A^* \subseteq  B^n$ and operations $f^*, \ldots, $ predicates $P^*, \ldots, $ and constants $c^*, \ldots, $ on $A^*$ all definable in $\mathbb{B}$ such that the structure $\mathbb{A}^* = \langle A^*; f^*, \ldots, P^*, \ldots,c^*, \ldots, \rangle$ is isomorphic to $\mathbb{A}$. (Note that constants $c,\ldots $ belong to the language of  $\mathbb{A}$, they are not parameters.) 
For example, the center of any group $G$ (considered as a structure in the standard group theoretic language) is definable as a group in $G$. 

 In the notation above if $\sim$ is a definable  equivalence relation on a definable subset $ A \subseteq B^n$ then we say that the quotient set $ A/\sim$ is {\em interpretable}\index{interpretable} in $\mathbb{B}$.  Furthermore, an operation $f$ or a predicate $P$ on the quotient set $A/\sim$ is interpretable in $\mathbb{B}$ if the full preimage of its graph in $A$ is definable in $\mathbb{B}$. For example, if $N$ is a normal definable subgroup of a group $G$, then the equivalence relation $x \sim y$ on $G$ given by $xN = yN$ is definable in $G$, so the quotient set $G/N$ of all right cosets of $N$ is interpretable in $G$. It is easy to see that the multiplication induced 
from $G$ on $G/N$ is also interpretable in $G$. This shows that the quotient group $G/N$ is interpretable in $G$. 

\begin{definition} \label{de:interpretable} An algebraic  structure $\mathbb{A} = \langle A ;f, \ldots, P, \ldots, c, \ldots\rangle$  is\textit{ absolutely interpretable} (or \textit{$0$-interpretable})  in a structure $\mathbb{B}$  if there is a  subset $A^* \subseteq B^n$  definable in $\mathbb{B}$, an equivalence relation $\sim$ on $A^*$ definable in $\mathbb{B}$, operations  $f^*, \ldots, $ predicates $P^*, \ldots, $ and constants $c^*, \ldots, $ on the quotient set $A^*/{\sim}$ all interpretable in $\mathbb{B}$ such that the structure $\mathbb{A}^* = \langle A^*/{\sim}; f^*, \ldots, P^*, \ldots,c^*, \ldots, \rangle$ is isomorphic to $\mathbb{A}$.
 \end{definition}

Now we introduce some  notation. An interpretation  of $\MA$ in $\MB$ is described  by the following set of formulas in the language $L(\MB)$
$$
\Gamma  = \{U_\Gamma(\bar x), E_\Gamma(\bar x_1, \bar x_2), Q_\Gamma(\bar x_1, \ldots,\bar x_{t_Q}) \mid Q  \in L(\MA)\}
$$
(here $\bar x$ and $\bar x_i$ are $n$-tuples of variables)  which  interpret $\mathbb{A}$ in $\mathbb{B}$ (as in the definition \ref{de:interpretable} above). Namely,  $U_\Gamma$  defines in $\mathbb{B}$ a subset  $A_\Gamma  = U_\Gamma(B^n)  \subseteq B^n$, $E_\Gamma$  defines in $\MB$ an  equivalence relation $\sim_\Gamma$ on $A_\Gamma$, and the formulas $Q_\Gamma$ define   functions $f_\Gamma$, predicates $P_\Gamma$, and constants $c_\Gamma$ that interpret the corresponding symbols from $L(\mathbb{A})$ on the quotient set $A_\Gamma/\sim_\Gamma$ in such a way that the $L$-structure $\Gamma(\MB) = \langle A_\Gamma/\sim_\Gamma; f_\Gamma, \ldots, P_\Gamma, \ldots, c_\Gamma, \ldots \rangle $ is isomorphic to $\MA$.  Note, that   we interpret a constant $c \in L(\MA)$ in the structure $\Gamma(\MB)$ by the $\sim_\Gamma$-equivalence  class of some tuple $\bar b_c \in A_\Gamma$ defined in $\MB$ by the formula $Q_c$. We write $\mathbb{A} \simeq \Gamma(\mathbb{B})$ if $\Gamma$ interprets $\MA$ in $\MB$ as described above and refer to $\Gamma$ as an {\em interpretation code}\index{interpretation code} or just {\em code}.  The number $n$ is called the dimension of $\Gamma$, denoted $n = dim\Gamma$. By $\mu_\Gamma$ we denote a a surjective map $A_\Gamma \to \MA$ (here $\MA = \langle A;L(\MA)\rangle$) that gives rise to an isomorphism   $\bar \mu_\Gamma: \Gamma(\MB) \to \mathbb{A}$.  We refer to  this map $\mu_\Gamma$ as  \emph{the coordinate map}\index{coordinate map} of the interpretation $\Gamma$. Sometimes we call the relation $\sim_\Gamma$  the \emph{kernel}  of the coordinate map $\mu_\Gamma$ and denote it by $\ker(\mu_\Gamma)$. Finally, notation $\mu: \MB \rightsquigarrow \MA$  means that  $\MA$ is interpretable in $\MB$ with the coordinate map $\mu$.
We use this notation throughout the paper.

More generally, the formulas that interpret $\mathbb{A}$ in $\mathbb{B}$ may contain elements from $\mathbb{B}$ that are not in the language $L(\mathbb{B})$, i.e., some parameters, say $p_1, \ldots,p_k \in B$.  In this case we assume that all the formulas from the code $\Gamma$ have a tuple  of extra variables $\bar y = (y_1, \ldots,y_k)$  for parameters in $\MB$: 
\begin{equation} \label {eq:code}
\Gamma =  \{U_\Gamma(\bar x,\bar y), E_\Gamma(\bar x_1, \bar x_2,\bar y), Q_\Gamma(\bar x_1, \ldots,\bar x_{t_Q},\bar y) \mid Q \in L(\mathbb{A})\}
\end{equation}
so that after the assignment $y_1 \to p_1, \ldots,y_k \to p_k$ the code  interprets $\mathbb{A}$ in $\mathbb{B}$.  In this event we write $\mathbb{A} \simeq \Gamma(\mathbb{B},\bar p)$ (here $\bar p = (p_1, \ldots,p_k)$), and say that $\mathbb{A}$ is interpretable in $\mathbb{B}$  by the code $\Gamma$ with parameters $\bar p$. In the case when $\bar p = \emptyset$ one gets again the absolute interpretability.

We will say that a subset $D \subseteq A_\Gamma/\sim_\Gamma$ is definable in $\MB$ if its full preimage in $ A_\Gamma$ is definable in $\MB$. More generally, a subset $D \subseteq (A_\Gamma/\sim_\Gamma)^m$ is definable in $\MB$ if its full preimage in $A_\Gamma^m$ under the natural projection $A_\Gamma^m \to (A_\Gamma/\sim_\Gamma)^m$ is definable in $\MB$.



Now we discuss a very strong version of  mutual interpretability of two structures, so-called {\em bi-interpretability}. 

\begin{definition} \label{de:bi-inter}
Two algebraic structures  $\MA$ and $\MB$ are called {\em bi-interpretable}\index{bi-interpretable} (with parameters) in each other  if  the following conditions hold:
\begin{itemize}
\item [1)] $\MA$ and $\MB$ are interpretable (with parameters) in each other, so $\MA \simeq \Gamma(\MB,p)$ and $\MB \simeq \Delta(\MA,q)$ for some codes $\Gamma$ and $\Delta$ and tuples of parameters $p, q$. By transitivity  $\MA$, as well as  $\MB$,  is    interpretable (with parameters) in itself, so $\MA \simeq (\Gamma \circ \Delta)(\MA,p^\ast)$ and $\MB \simeq (\Delta \circ \Gamma)(\MB,q^\ast)$, where $\circ$ denotes composition of interpretations and $p^\ast$,  $q^\ast$  the corresponding parameters.
\item [2)]  There is a formula $\theta_\MA(\bar u, x, \bar s)$ in the language $L(\MA)$ such that $\theta_\MA(\bar u, x, p^*)$ defines in $\MA$ the  isomorphism  $ \bar \mu_{\Gamma \circ \Delta}: (\Gamma \circ \Delta)(\MA,p^\ast)  \to  \MA$ (more precisely, it defines the coordinate map $\mu_{\Gamma \circ \Delta}: A_{\Gamma \circ \Delta} \to A$). Similarly, there is a formula $\theta_\MB(\bar v, x, \bar t)$ in the language $L(\MB)$ such that $\theta_\MB(\bar v, x,  q^*)$ defines in $\MB$ the isomorphism $\bar \mu_{ \Delta \circ \Gamma}: (\Delta \circ \Gamma)(\MB,q^\ast) $ (more precisely, it defines the coordinate map $\mu_{ \Delta \circ \Gamma}: B_{(\Delta \circ \Gamma)} \to B$).

\end{itemize}
\end{definition}
\begin{definition} \label{de:regular-bi-int}
Two algebraic structures  $\MA$ and $\MB$ are called {\em regularly  bi-interpretable} in each other  if  the following conditions hold:
\begin{itemize}
\item [1)] $\MA$ and $\MB$ are regularly interpretable  in each other, so $\MA \simeq \Gamma(\MB,\phi)$ and $\MB \simeq \Delta(\MA,\psi)$ for some codes $\Gamma$ and $\Delta$ and the corresponding formulas $\phi, \psi$ (without  parameters). By transitivity  $\MA$, as well as  $\MB$,  is    regularly interpretable  in itself, so $\MA \simeq (\Gamma \circ \Delta)(\MA,\phi^\ast)$ and $\MB \simeq (\Delta \circ \Gamma)(\MB,\psi^\ast)$, where $\circ$ denotes composition of interpretations and $\phi^\ast, \psi^\ast$ the corresponding formulas.
\item [2)]  There is a formula $\theta (\bar y, x, \bar z)$ in the language of $\MA$ such that for every tuple $p^\ast$ satisfying $\phi^\ast(\bar z)$ in $\MA$ the formula $\theta (\bar y, x,p^\ast)$ defines in $\MA$ the isomorphism $\bar \mu_{\Gamma \circ \Delta} : (\Gamma \circ \Delta)(\MA,p^\ast) \to \MA  $  and there is  a formula $\sigma (\bar u, x, \bar v)$ in the language of $\MB$ such that for every tuple $q^\ast$ satisfying $\psi^\ast(\bar v)$ in $\MB$ the formula $\sigma (\bar u, x, q^\ast)$ defines in $\MB$ the  isomorphism $\bar \mu_{\Delta\circ \Gamma} :  (\Delta\circ \Gamma)(\MB,q^\ast)  \to \MB$.

\end{itemize}
\end{definition}

Algebraic structures  $\MA$ and $\MB$ are called {\em 0-bi-interpretable} or {\em absolulutely  bi-interpretable}\index{absolulutely  by-interpretable} in each other if in the definition above the tuples of parameters  $p$ and $q$ are empty.  
But 0-bi-interpretability is rather rare. Indeed, 
 \cite{Hodges} if $\MA$ and $\MB$ are 0-bi-interpretable in each other then their groups of automorphisms are isomorphic.

\subsection{Interpretability in $\mathbb N$ and $\mathbb Z$.}
\label{se:2.2} 

By Lagrange’s Four Square Theorem, every positive integer is a sum of four squares; hence $\N$ is a definable subset of $\Z$. In particular, $\N$ is absolutely interpretable in $\Z$. Note that every element of $\N$ (of $\Z$) is 0-definable in $\N$ (in $\Z$); therefore, every algebraic structure interpretable in $\N$ (in $\Z$) with parameters is also absolutely interpretable in $\N$ (in $\Z$).   On the other hand,  it is known (and easy) that $\Z$ is also interpretable in $\N$. This can be done in a number of different ways. For example, non-negative integers $n$ can be interpreted in $\N$ by pairs $(0,n)$, while negative integers $m$ can be interpreted by pairs  $(1,|m|)$. It is not hard to write formulas for such pairs that would interpret addition and multiplication in $\Z$, so $\N$ and $\Z$ are mutually interpreted in each other. In fact, the following is true (and easy to prove; see, for example, \cite[Theorem 1.5.13]{Kaye})

\begin{prop}
 $\N$ and $\Z$ are absolutely bi-interpretable in each other. 
\end{prop}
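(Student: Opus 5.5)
To prove the Proposition I would construct both interpretations explicitly and then check that the two compositions are definable in the structure where they live.

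\emph{Interpretation of $\N$ in $\Z$.} This uses the code $\Gamma$ of dimension one. Put $U_\Gamma(x) = \exists a,b,c,d\,(x=a^2+b^2+c^2+d^2)$; by Lagrange's Four Square Theorem this defines $\N$ in $\Z$. Let $E_\Gamma$ be equality, and take addition, multiplication, $0$ and $1$ to be the restrictions of those in $\Z$.

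\emph{Interpretation of $\Z$ in $\N$.} This uses the code $\Delta$ of dimension two, with $U_\Delta(x_1,x_2)$ given by $x_1=0 \vee x_1=1$. Let $E_\Delta$ identify $(0,0)$ with $(1,0)$ and otherwise be equality. Addition and multiplication are given by finite case distinctions on the signs. For instance, $(s,m)+(t,n)=(r,k)$ holds if one of the following is true:
\begin{itemize}
\item $s=t$, $r=s$ and $k=m+n$;
\item $s\neq t$ and $m\ge n$, with $k+n=m$ and $r=s$ (using $m\ge n \iff \exists z\,(m=n+z)$), together with the symmetric clause;
\end{itemize}
all taken modulo the identification of $(0,0)$ with $(1,0)$. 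Multiplication is $(s,m)\cdot(t,n)=(s\oplus t, mn)$, where the parity $s\oplus t$ is expressed by listing the four cases. The constants are $0_\Delta=(0,0)$ and $1_\Delta=(0,1)$. No parameters are needed.

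\emph{The isomorphism on $\Z$.} The composite $\Delta\circ\Gamma$ interprets $\Z$ in $\Z$ on pairs $(s,m)$ with $s\in\{0,1\}$ and $m\ge 0$. Its coordinate map $(s,m)\mapsto (-1)^s m$ is defined by
\[
\theta(s,m,x):\ (s=0\wedge x=m)\vee(s=1\wedge x+m=0).
\]

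\emph{The isomorphism on $\N$.} The composite $\Gamma\circ\Delta$ interprets $\N$ in $\N$ on pairs $(0,m)$, modulo the kernel. Its coordinate map is defined by $\sigma(s,m,x): s=0\wedge x=m$.

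In both cases one checks that the formula defines a bijection compatible with the operations. This is immediate, since the interpreted operations were built to mirror integer arithmetic.

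The only slightly delicate step is writing the addition formula for $\Delta$ correctly across the mixed-sign cases. The double representation of zero must also be respected: the relation $E_\Delta$ has to be a congruence, and the formulas for $\theta$ and $\sigma$ must be invariant under it. Everything else is routine. Alternatively, one can simply cite \cite[Theorem 1.5.13]{Kaye}.
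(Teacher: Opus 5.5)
Your proposal follows the paper's route. $\N$ is defined in $\Z$ by Lagrange's four-square formula, $\Z$ is interpreted in $\N$ by sign--magnitude pairs, and the rest is routine; the paper only sketches these two interpretations and cites \cite[Theorem 1.5.13]{Kaye}. Your operation formulas and your $\theta$ are correct.

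There is one slip, in $\sigma$. The domain of $\Gamma\circ\Delta$ is not only the pairs $(0,m)$. The $\Delta$-translation of the four-squares formula is $E_\Delta$-invariant, so it is also satisfied by $(1,0)$. But $\sigma(1,0,x)$ holds for no $x$. Thus $\sigma$ violates exactly the $E_\Delta$-invariance you flagged, and it does not define $\mu_{\Gamma\circ\Delta}$ on its whole domain.

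There are two easy fixes:
\begin{itemize}
\item Take $\sigma(s,m,x)\colon x=m$ instead. This is correct because every pair in that domain has $s=0$ or $m=0$, so $(-1)^s m=m$.
\item Alternatively, use the paper's injective encoding, where negative integers are only the pairs $(1,n)$ with $n\neq 0$. Then $E_\Delta$ is plain equality and your $\sigma$ works as written. The cost is that the operation formulas must be adjusted to output a zero result as $(0,0)$.
\end{itemize}
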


From the transitivity of interpretations (see \cite{DM1}), it follows that an algebraic structure  $\A$ is interpretable (regularly interpretable) in $\N$ if and only if it is interpretable (regularly interpretable) in $\Z$. 
It is convenient because many classical results are about interpretations in $\N$, but in groups, interpretations with $\Z$ are more natural. 

From the arithmetical hierarchy and its relation to Turing degrees (usually referred to as Post's theorem), it follows that  every computably enumerable subset of $\N^n$,  $n \geq 1$, is absolutely definable in $\N$ (see \cite{Rogers,Soare}). This provides a very useful tool for interpretability of algebraic structures in $\N$. To make it more precise, recall  that an algebraic structure $\A = \langle A;L\rangle$ in a finite language $L$ is \emph{computable} or \emph{recursive} (see \cite{Rabin,Malcev}) if there is a surjective map $\nu: \N \to \A$, called an enumeration of $\A$,  such that the full preimages with respect to $\nu$ of all the graphs of the basic operations and predicates, including the equality =, on $\A$ are computable in $\N$. 
For  such an enumeration $\nu$ there is a computable function $\lambda:\N \to \N$, where $\lambda(i) = j$ if $j$ is the smallest natural number such that $\nu(j) = \nu(i)$. Put $\N_0 = \lambda(\N)$. Then the restriction $\nu_0$ of $\nu$ to $\N_0$ gives a bijective enumeration  $\nu_0: \N_0 \to A$ such that the full $\nu_0$-preimages of the basic operations and predicates on $A$ are computable in $\N$. Since $\nu_0$ is a bijection, this construction gives an $L$-structure denoted by $\A_\nu$ on $\N_0$ that is isomorphic to $\A$ with respect to $\nu_0$. Since the basic operations and predicates are computable in $\N$, they are definable in $\N$; hence, $\A_\nu$ is an interpretation of $\A$ in $\N$ with the coordinate map $\nu_0$. 

In particular,  a finitely generated  group $G$ with a decidable word problem is computable, since one can effectively enumerate all group words in a fixed finite set of generators of $G$, thus obtaining an enumeration $\nu:\N \to G$ that makes the group $G$ computable. Therefore, such groups are interpretable in $\N$, hence in $\Z$.

Similarly, every  finitely generated commutative ring with a decidable equality problem is computable; hence, it is interpretable in $\N$ and  in $\Z$.  It follows, for example,  that a ring of Laurent polynomials $\Z[a_1, a_1^{-1}, \ldots,a_n,a_n^{-1}]$ is interpretable in $\N$ and in $\Z$. Furthermore, it was shown in \cite{MN} that $\Z[a_1, a_1^{-1}, \ldots,a_n,a_n^{-1}]$ is bi-interpretable with $\N$.

\subsection{Wreath products of free abelian groups }
\label{se:2.3}

 Let $A$ and  $B$ be free abelian groups with bases $\{a_1,\ldots ,a_m\}$ and $\{b_1, \ldots,b_n\}$, respectively. If not stated otherwise, we view the groups $A$ and $B$ in multiplicative notation.
Denote by $G=B\wr A$ the restricted wreath product of $A$ and $B$. We fix this notation throughout the entire paper.

By construction, the group $G$ is a semidirect product $N \rtimes A$, where $N$ is the direct product  $N \simeq \Pi_{a\in A} B_a$ of isomorphic copies $B_a$ of $B$, termed the base group of this wreath product, and $A$, termed the active group, acts on $N$ by shifting the copies $B_a$ in $N$. Namely if $f \in \Pi_{a\in A} B_a$ is viewed as a function $f: A \to \cup_{a\in  A} B_a$,  then  the action of $a \in A$ on $f$ results  in a function $f^a(x) = f(ax)$ for any $x \in A$. As usual, elements $g \in G$ can be written uniquely  in a normal form $g = af$, where $a \in A, f \in N$, and the multiplication is given by
\begin{equation} \label{eq:mult-G}
    a_1f_1 \cdot a_2f_2 = a_1a_2f_1^{a_2}f_2.
\end{equation}

Note that in this case $f^a = a^{-1}fa$ - the conjugation of $f$ by $a$.
 The groups $A$ and $B$ canonically embed into $G$ via the maps $a \in A \to a\cdot 1_N \in G$ and $b \in B \to f_b$, where $f_b(x) = 1$ for $x \neq 1$ and $f_b(1) = b$. We often identify $A$ and $B$ with their images in $G$, so $B$ is identified with the copy $B_1$.

We list below some known properties of wreath products $G$.

\begin{enumerate}
    \item [1)] The subgroup $N$ of $G$ is precisely the normal closure $N=ncl (B)$  of $B$ in $G$.
    \item [2)] The action of $A$ on $N$ extends linearly to the action of the integer group ring $\Z A$ on $N$, so for $u = \Sigma n_ia_i \in \Z A$ and $t \in N$ $t^u = \Pi (t^{n_i})^{a_i}$. This makes $N$ into $\Z A$-module. Observe, that we use here the multiplicative notation for the action of $\Z A$ on $N$, which matches naturally with the conjugation notation in $G$.
    \item [3)] The ring $\Z A$ is isomorphic to the ring of Laurent polynomials 
    $$
    \Z A \simeq \Z[a_1, a_1^{-1}, \ldots, a_m,a_m^{-1}],
    $$
    so sometimes we identify $\Z A$ with $Z[a_1, a_1^{-1}, \ldots, a_m,a_m^{-1}]$.
    \item [4)] $Z A$-module $N$ is a free $\Z A$-module with basis $b_1, \ldots, b_n$.
    \item [5)] \label{normal-form} Every element $g \in G$ can be written uniquely in the canonical (normal)  form 
    $$
    g = {a_1}^{\gamma _1}\ldots {a_m}^{\gamma_m}{b_1}^{P_1}\ldots {b_n}^{P_n} = \Pi_ia_i^{\gamma_i} \Pi_j b_j^{P_j},$$
where $\gamma_i \in \Z$ and $P_1,\ldots ,P_n$ are Laurent polynomials from $\Z A$. The multiplication of normal forms in $G$ is given by the following formula:
\begin{equation} \label{eq:mult-norm-forms}
    \Pi_ia_i^{\gamma_i} \Pi_j b_j^{P_j}  \cdot \Pi_ia_i^{\beta_i} \Pi_j b_j^{Q_j}  = \Pi_ia_i^{\gamma_i +\beta_i} \Pi_j b_j^{P_j\Pi_i a_i^{\beta_i} +Q_j}
\end{equation}
\end{enumerate}

  The tuple  $p= (a_1,\ldots,a_m,b_1, \ldots,b_n)$ plays a role of a "basis" of $G$ and for $g \in G$  the tuple 
$$
s_p(g) = (\gamma_1, \ldots,\gamma_m, P_1, \ldots,P_n) 
$$
is the tuple of "coordinates"  of $g$, provided $g = \Pi_ia_i^{\gamma_i} \Pi_j b_j^{P_j}$.

For convenience, in this and similar cases, we write sometimes $p = \bar a\cdot \bar b$, where $\bar a = (a_1, \ldots,a_m)$, $\bar b = (b_1, \ldots,b_n)$ and $\cdot $ is concatenation of tuples. Respectively, we write $s_p(g) = \bar \gamma\cdot\bar P$ for the tuple of coordinates of $g$ relative  to $p$. Using this notation  we sometimes write $g = {\bar a}^{\bar \gamma}\cdot {\bar b}^{\bar P}$.

Now we describe other bases of $G$ more formally.

\begin{definition} \label{de:bases}
    A tuple $q = (c_1, \ldots,c_m,u_1,\ldots,u_n) $, where $c_i \in G \smallsetminus N, u_j\in N,$ is termed a basis of $G = B \wr A$ if it satisfies the following equivalent conditions in $G$:
    \begin{enumerate}
        \item [1)]  We fix the tuple $p = (a_1,\ldots,a_m,b_1, \ldots,b_n)$ in $G$. The tuples $p$ and $q$ are automorphically equivalent in $G$, i.e., there is an automorphism of $G$ that maps $p$ to $q$ coordinate-wise.
        \item [2)] $C = \langle c_1, \ldots,c_m\rangle \simeq A$,  $G = C\cdot N$, $C \cap N = 1$, and $N$ is a free $\Z[c_1^{\pm 1}, \ldots,c_m^{\pm 1}]$-module with basis $u_1, \ldots,u_n$, when each $c_i$ acts on $N$ by conjugation.
        \item [3)] Every $g \in G$ can be uniquely written in the form 
        $g = \Pi_i c_i^{\gamma_i} \Pi_j u_j^{P_j}$, where $\gamma_i \in \Z$ and $p_j \in \Z[c_1^{\pm 1}, \ldots, c_m^{\pm 1}]$ and multiplication of these normal forms is given by the formula (\ref{eq:mult-norm-forms}), where each $a_i$ is replaced by $c_i$, and each $b_i$ is replaced by $u_i$.
    \end{enumerate}
\end{definition}
To verify that conditions 1), 2), and 3) above are equivalent, suppose first that an automorphism $\phi$ of $G$ maps $p$ to $q$. Then $\phi(A) = C$, so $C \simeq A$. By Lemma \ref{le:2}, item 2), $N$ is the centralizer of the commutant $G'$ of $G$, hence $\phi(N) = N$.  Clearly,  $G = C\cdot N$ and $C \cap N = 1$. The action of $\Z[a_1, a_1^{-1}, \ldots, a_m,a_m^{-1}]$ on $N$ comes from the conjugation of the elements $a_i$ on $N$, so $\phi$ maps this action to the action of $\Z[c_1^{\pm 1}, \ldots,c_m^{\pm 1}]$ on $N$, and $N$ becomes a free $\Z[c_1^{\pm 1}, \ldots,c_m^{\pm 1}]$-module with basis $u_1, \ldots,u_n$, when each $c_i$ acts on $N$ by conjugation. 
 Hence, 1) implies 2). Assume now that 2) holds; then $U = \langle u_1, \ldots,u_n\rangle$ is isomorphic to $B$. Moreover,   $G \simeq N \rtimes C$ and $G \simeq C \wr U$. Hence, the property 5) above holds for $G \simeq N \rtimes C$ and the formula (\ref{eq:mult-norm-forms}) follows. This proves 3). To see that 3) implies  1) observe that if 3) holds then the coordinate-wise map $\psi_0: q \to p$ extends to a bijection on the normal forms 
$$
\psi: \Pi_i c_i^{\gamma_i} \Pi_j u_j^{P_j} \to \Pi_i a_i^{\gamma_i} \Pi_j b_j^{P_j},
$$
which is a group homomorphism since the multiplication is given by the corresponding formulas for $q$ and for $p$ described in (\ref{eq:mult-norm-forms}). Hence $\psi:G \to G$ is an automorphism, so 1) holds.

 If $q = (c_1, \ldots,c_m,u_1,\ldots,u_m)$ is a basis of $G$ then we call the tuple $\bar c  =(c_1, \ldots,c_m)$ a  \emph{top basis}  of $G$ and $\bar u =(u_1,\ldots,u_m)$ a \emph{bottom basis}.


Now we describe the members $G_n$ of the lower central series of $G$. Recall that $G_1 = G$ and $G_{i+1} = [G,G_i]$ for $1 < i \in \N$.

Denote by $\Delta$  the augmentation  ideal of $\Z A$, that is the kernel of the homomorphism $\Z A \to \Z$ induced by the trivial homomorphism $A \to 1$. As an ideal $\Delta$ is generated by elements $a_1 - 1, \ldots, a_m -1$ of $\Z A$. To see this note  that $a_i^{-1}-1 = \frac{1-a_i}{a_i}$ and $uv-1 = u(v-1)+(u-1)$ for $u,v \in A$. Put $\Delta^0 = 1$ and let  $\Delta^i$ be the $i$th-power of $\Delta$ for $i >0$, i.e., it is an ideal in $\Z A$ generated by all products of the type $(y_1-1) \ldots (y_i-1)$, where $y_j \in \{a_1, \ldots, a_m\}$. Denote by $N^{\Delta^{i}}$ the $\Z A$-submodule of $N$ generated by all elements $u^P$, where $u \in N$ and $P \in \Delta^i$. Put $R = \Z A$, $R_i = \Z A/\Delta^i$, and $N_i = N/ N^{\Delta^i}$. In particular, $R_0 \simeq  R$, $R_1  \simeq \Z$. 

\begin{lemma} \label{le:1}
The following hold in $G$:
\begin{itemize}
    \item [1)] $G_i$ is generated as a normal subgroup in $G$ by all simple left-normed commutators of the type
    $[b_k,a_{j_1},\ldots ,a_{j_{i-1}}]$, where $1\leq k \leq n$, $1\leq j_1, \ldots, j_{i-1} \leq m$;
    
    \item [2)]   $G_i = N^{\Delta^{i-1}}$ for every natural $i > 1$, in particular, $G_2 = [G,G] = N^\Delta$;
   \item [3)] $N_i = N/ N^{\Delta^i} \simeq B \otimes_\Z R_i $  for every natural $i \geq 1$;
    \item [4)] $G/G_i \simeq N_{i-1} \rtimes  A \simeq (B \otimes_\Z R_{i-1}) \rtimes  A$ for every natural $i \geq 1$;
    \item [5)] $G_i/G_{i+1}$ is free abelian of finite rank for every natural $i\geq 1$;
    \item [6)] $G/G_{i+1}$ is torsion-free for every natural $i > 1$.
\end{itemize}

\end{lemma}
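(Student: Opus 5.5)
We prove the six items in order, working throughout in the semidirect product $G=N\rtimes A$. Two facts do most of the work: $N$ is abelian, and conjugation by $a\in A$ is the module action, so that in multiplicative notation $[u,a]=u^{-1}u^{a}=u^{a-1}$ for $u\in N$.

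\textbf{Step 1: items 1) and 2).} We first prove, by induction on $i\ge 2$, that $G_i=N^{\Delta^{i-1}}$.

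For the base case $i=2$: $G/N\cong A$ is abelian, so $G_2\le N$. Any commutator $[af,a'f']$ with $a,a'\in A$ and $f,f'\in N$ reduces, using that $N$ is abelian and $A$ is abelian, to a product of terms $f^{\pm(a'-1)}$ and $f'^{\pm(a-1)}$ up to conjugation. These terms lie in $N^\Delta$. Conversely, $N^\Delta$ is generated as an $R$-module by the elements $b_k^{a_j-1}=[b_k,a_j]$, using the generators of $\Delta$ noted above. Since $G_2$ is normal, it is an $R$-submodule of $N$, so it contains $N^\Delta$.

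For the inductive step, assume $G_i=N^{\Delta^{i-1}}$. Then $G_{i+1}=[G_i,G]$. Since $G_i\le N$ and $N$ is abelian, for $u\in G_i$ we get $[u,af]=[u,a]=u^{a-1}$. Hence $G_{i+1}$ is the normal closure, equivalently the $R$-submodule, generated by $u^{a_j-1}$ with $u\in G_i$. This submodule is exactly $N^{\Delta^{i}}$.

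Item 1) now follows from the identity $[b_k,a_{j_1},\dots,a_{j_{i-1}}]=b_k^{(a_{j_1}-1)\cdots(a_{j_{i-1}}-1)}$. These elements generate $N^{\Delta^{i-1}}$ as an $R$-module, that is, as a normal subgroup, because $N$ is free on the $b_k$ and $\Delta^{i-1}$ is generated by the products $(a_{j_1}-1)\cdots(a_{j_{i-1}}-1)$.

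\textbf{Step 2: items 3) and 4).} By item 4 of the list of properties, $N\cong R^n\cong B\otimes_\Z R$ as $R$-modules. It follows that $N^{\Delta^i}\cong (\Delta^i)^n$, and so $N_i\cong (R/\Delta^i)^n\cong B\otimes_\Z R_i$, which is item 3).

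For item 4), the normal subgroup $G_i=N^{\Delta^{i-1}}$ lies in $N$, which has complement $A$. Therefore $G/G_i\cong (N/G_i)\rtimes A=N_{i-1}\rtimes A$. The case $i=1$ is degenerate and is read via $R_0=R$ and the standard convention.

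\textbf{Step 3: items 5) and 6), the main obstacle.} Item 5) for $i=1$ needs the additional remark that $G/G_2\cong (B\otimes\Z)\times A\cong \Z^{n+m}$. For $i\ge2$ we have $G_i/G_{i+1}\cong(\Delta^{i-1}/\Delta^{i})^n$. The key ring-theoretic fact is that $\Delta^{j}/\Delta^{j+1}$ is free abelian of finite rank, and that $R/\Delta^i$ is free abelian of finite rank.

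To prove it, we change variables to $x_j=a_j-1$. Then $R$ embeds in the completion $\Z[[x_1,\dots,x_m]]$: the elements $a_j^{-1}=(1+x_j)^{-1}$ become power series, and $\Delta^i$ corresponds to the polynomials of order $\ge i$. More directly, we check that $R/\Delta^i\cong \Z[x_1,\dots,x_m]/(x_1,\dots,x_m)^i$. This holds because $a_j$ is invertible modulo $\Delta^i$: the element $x_j$ is nilpotent there, so $1+x_j$ is a unit. The quotient $\Z[x]/(x)^i$ is free abelian, with basis the monomials of degree $<i$.

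Hence each $R_i$ is free abelian of finite rank, and each $\Delta^{i-1}/\Delta^{i}$ is free abelian, spanned by the degree $i-1$ monomials. This gives item 5).

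Item 6) follows by induction on the filtration $G\supset G_2\supset\cdots\supset G_{i+1}$. The quotients $G/G_2$ and $G_j/G_{j+1}$ are all torsion-free, and an extension of a torsion-free group by a torsion-free group is torsion-free.

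The step requiring the most care is the identification $R/\Delta^i\cong\Z[x]/(x)^i$. It relies on the units $a_j^{-1}$ becoming polynomials in $x_j$ modulo $\Delta^i$, namely $a_j^{-1}\equiv\sum_{k<i}(-x_j)^k$.
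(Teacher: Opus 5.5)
Your proof is correct and follows the same skeleton as the paper. You identify $G_i$ with the submodule $N^{\Delta^{i-1}}$, read off 3) and 4) from $N\cong B\otimes_\Z R$, pass to the variables $x_j=a_j-1$ for 5), and get 6) from extensions of torsion-free groups. The differences are in the details.

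\textbf{Items 1) and 2).} The paper argues in the other order. It uses the general fact that $G_i$ is normally generated by left-normed commutators in the generators. It then discards those with an entry from $N$ beyond the second place, and deduces 2) from $[b_k,a_{j_1},\dots,a_{j_{i-1}}]=b_k^{(a_{j_1}-1)\cdots(a_{j_{i-1}}-1)}$. Your direct induction, based on $[u,af]=u^{a-1}$ for $u\in N$, avoids that general fact and proves 2) first.

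\textbf{Item 5).} The paper only shows that an ordinary polynomial in $\Z[a_1,\dots,a_m]$ has a unique expansion in the variables $a_j-1$. It leaves implicit why this determines $\Delta^{i-1}/\Delta^{i}$ inside the Laurent ring $\Z A$. Your identification $R/\Delta^i\cong\Z[x_1,\dots,x_m]/(x_1,\dots,x_m)^i$ supplies exactly that link. Your separate treatment of $i=1$ is also an improvement. There $G_1=G\neq N$, so the paper's identification $G_i/G_{i+1}\cong N^{\Delta^{i-1}}/N^{\Delta^i}$ gives rank $n$ instead of the correct $n+m$.

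Two small points.
\begin{enumerate}
\item In the identification $R/\Delta^i\cong\Z[x]/(x)^i$, the step that needs stating is injectivity of the natural map $\Z[x]/(x)^i\to R/\Delta^i$. Surjectivity is what your congruence $a_j^{-1}\equiv\sum_{k<i}(-x_j)^k$ gives. For injectivity, note that $R$ is the localization of $\Z[x]$ at the monoid generated by the $1+x_j$, and each $1+x_j$ is a unit in $\Z[x]/(x)^i$. So the quotient map extends to a ring map $R\to\Z[x]/(x)^i$. This map kills $\Delta^i=(x)^iR$ and is inverse to the natural one.
\item Items 1) and 4) are simply false for $i=1$, under either reading of $\Delta^0$. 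The normal closure of the $b_k$ is $N\neq G$, and $G/G_1$ is trivial while $N_0\rtimes A$ is $A$ or $G$. Your reading $R_0=R$ would give $G/G_1\cong G$. So rather than appealing to a convention, restrict those two items to $i\ge 2$.
\end{enumerate}
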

\begin{proof}
To prove 1) recall that $G_i$ is generated as a normal subgroup by all simple left-normed commutators of the form $[x_1, \ldots,x_i]$ where $x_k \in \{a_1, \ldots,a_m,b_1, \ldots,b_n\}$, $k = 1,\ldots,i$ (this is true for any group $G$ and $x_i$ from any fixed generating set of $G$). Since $N$ is abelian and $G' \leq N$ any commutator $[x_1, \ldots,x_i]$  where $x_s \in N$ for some $s>2$ is equal to 1. Therefore, $G_i$ is generated as a normal subgroup in $G$ by the commutators  $[b_k,a_{j_1},\ldots ,a_{j_{i-1}}]$, as claimed. 

To prove 2), note that 
$$
[b_k,a_{j_1},\ldots ,a_{j_{i-1}}] = b_k^{(a_{j_1}-1)\ldots (a_{j_{i-1}}-1)},
$$
which implies 2). 

To see 3) observe that $N \simeq B \otimes_Z \Z A$ and $B \otimes_\Z R_i $ is a free $R_i$-module with basis $b_1, \ldots,b_n$. The natural epimorphism $\Z A \to R_i = \Z A/\Delta^i$ turns  $B \otimes_\Z R_i $ into a $\Z A$-module. Now, the identical homomorphism $B \to B$ and the epimorphism $\Z A \to R_i$ give rise to an epimorphism 
$$
\theta: B \otimes_Z \Z A \to B \otimes_\Z R_i.
$$
To prove 3) it suffices to note  that $\ker \theta = N^{\Delta^i}$. 

 Clearly, 2) and 3) imply 4). 
 
 For 5) note that $G_i/G_{i+1}$ is freely generated as an abelian group by elements $[b_k,a_{j_1},\ldots ,a_{j_{i-1}}],$ where $j_1\leq\ldots \leq j_{i-1}, k=1,\ldots ,n.$ Indeed, it follows from 1) and 2) that $G_i/G_{i+1}$ is isomorphic as an abelian group to $N^{\Delta^{i-1}}/N^{\Delta^{i}}$. It also follows from 2) that the images of the elements $b_k^{(a_{j_1}-1)\ldots (a_{j_{i-1}}-1)}$ in the quotient $N^{\Delta^{i-1}}/N^{\Delta^{i}}$ generate this quotient. Since the ring $\Z A$ is commutative we can always reorder factors in ${(a_{j_1}-1)\ldots (a_{j_{i-1}}-1)}$ and assume that $j_1\leq\ldots \leq j_{i-1}$. This set still generates the quotient as an abelian group. Moreover, there are no non-trivial relations between these elements. To see this, it suffices to notice that every polynomial $P \in \Z[a_1, \ldots,a_m]$ can be uniquely decomposed as a polynomial with integer coefficients in  "new variables" $y_i = a_i -1$. Indeed, the map $a_1 \to a_1-1, \ldots, a_m \to a_m -1$ extends to an endomorphism  $\psi$ of  $\Z[a_1, \ldots,a_m]$ (since $\Z[a_1, \ldots,a_m]$ is a free commutative ring with basis $a_1, \ldots,a_m$). In fact, $\psi$  is a bijection because there is an inverse map induced by the map $a_1 \to a_1+1, \ldots, a_m \to a_m +1$. It follows that $P = \psi(\psi^{-1}(P))$ is the required decomposition. It is unique since $\psi$ is an automorphism. Now it is clear that the elements $b_k^{(a_{j_1}-1)\ldots (a_{j_{i-1}}-1)},$ where $j_1\leq\ldots \leq j_{i-1}, k=1,\ldots ,n$  form  a basis of $N^{\Delta^{i-1}}$ modulo $N^{\Delta^{i}}$. This proves 5).

 6) follows from 5) since extensions of torsion-free groups are again torsion-free. 

\end{proof}

\begin{lemma} \label{le:2}
The following holds in $G$:

\begin{itemize}
    \item [1)] $N$ is the centralizer $C_G(g)$ of any non-trivial $g \in N$;
    \item [2)] $N$ is the centralizer $C_G(G')$ of the commutant $G' = [G,G]$;
    \item [3)] $A$ is the centralizer $C_G(a)$ of any non-trivial $a \in A$.
   
\end{itemize}

 \end{lemma}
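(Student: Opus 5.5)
All three items will be proved by direct computation with the normal forms $g = af$ ($a\in A$, $f\in N$) and the multiplication rule (\ref{eq:mult-G}). The key structural fact is that $N$ is a free $\Z A$-module (property 4), hence torsion-free as a $\Z A$-module, since $\Z A$ is an integral domain. So if $t\in N$ and $t^{Q}=1$ for some $Q\in\Z A$, then $t=1$ or $Q=0$.

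For 1), let $1\neq g\in N$. Since $N$ is abelian, $N\subseteq C_G(g)$. Conversely, suppose $x=af$ commutes with $g$. Because $N$ is abelian, $g^{x}=g^{a}$, so the condition reads $g^{a}=g$, i.e. $g^{a-1}=1$ in module notation. Torsion-freeness gives $a-1=0$ in $\Z A$, so $a=1$ and $x\in N$. Hence $C_G(g)=N$.

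For 2), we have $G'=N^{\Delta}\le N$ by Lemma \ref{le:1}, item 2), and $G'$ is nontrivial; for instance $[b_1,a_1]=b_1^{a_1-1}\neq 1$. Since $N$ is abelian, $N\le C_G(G')$. For the reverse inclusion, choose any $1\ne g\in G'$. Then $C_G(G')\le C_G(g)=N$ by item 1), and equality follows.

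For 3), let $1\neq a\in A$. Since $A$ is abelian, $A\le C_G(a)$. Conversely, suppose $x=cf$ with $c\in A$, $f\in N$ commutes with $a$. Since $c$ commutes with $a$, this means $f$ commutes with $a$, i.e. $f^{a}=f$, so $f^{a-1}=1$. Now $a-1\neq 0$ in $\Z A$ because $a\neq 1$ and group elements are linearly independent in $\Z A$. Torsion-freeness then forces $f=1$, so $x=c\in A$.

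The only step requiring any care is justifying torsion-freeness of $N$ over $\Z A$. This follows because a free module over the domain $\Z[a_1^{\pm1},\dots,a_m^{\pm1}]$ is a direct sum of copies of the domain. Equivalently, one can argue concretely: multiplying a nonzero Laurent polynomial by $a-1\neq0$ gives a nonzero result, by comparing extreme monomials in a monomial order. Everything else is routine.
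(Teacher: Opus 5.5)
Your proof is correct and follows the same route as the paper. You write $x=af$ and reduce commutation with $g\in N$ (resp. with $a\in A$) to $g^{a-1}=1$ (resp. $f^{a-1}=1$), use that nontrivial elements of $A$ and $N$ never commute, and get 2) from 1) via a nontrivial element of $G'\le N$. You in fact supply the justifications the paper's terse proof leaves implicit, namely the reduction $g^{af}=g^{a}$ and the torsion-freeness of the free $\Z A$-module $N$ over the domain $\Z A$.
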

\begin{proof} For any non-trivial $g\in N$, every element in $N$ commutes with $g$ and every element in $A$ does not commute with $g$. Since every element in $G$ is a product of an element in $N$ and an element in $A$, this gives 1) and 2). Since $A$ is abelian, for any $a\in A$,
$A\subseteq C_G(a)$. And $a$ does not commute with any non-trivial $g\in N$, this gives 3). 
    
\end{proof}
 
\section{Interpretability of $G$ and $\Z$ in each other}
\label{se:3}

\subsection{ Interpretability of $G$ in $\Z$}
\label{se:3.1}
The group $G$ is a finitely generated metabelian group; hence, it has a decidable word problem (see, for example, \cite{Baumslag}). Therefore, as we mentioned in Section \ref{se:2.2}, $G$ is absolutely interpretable in $\Z$. However, we describe a particular interpretation here, as we will use it in the sequel. 

We show the interpretability of $G$ in $\mathbb{Z}$ in two steps. First, we prove that the ring
$$
R = \mathbb{Z}[a_1, a_1^{-1}, \ldots, a_m, a_m^{-1}]
$$
is interpretable in $\mathbb{Z}$. Then, using a fixed basis of $G$, we interpret $G$ in $\mathbb{Z}$ via the coordinate representations of its elements with respect to this basis.

To see that $R$ is absolutely interpretable in $\Z$, note that $R$ is computable (since the word problem in $R$ is decidable); hence, there exists a computable  injective function  $\nu: R \to \N $ such that the set $\nu(R)$ is computable in $\N$ and the images under the map $\nu$ of the ring operations of the ring $R$ are computable in $\N$, i.e., the following operations on $\nu(R)$ (here $Q_i \in R$, $i = 1,2,3$) are computable in $\N$:
$$
k_1 \oplus k_2 = k_3 \Longleftrightarrow \wedge_{i = 1}^3 (k_i = \nu(Q_i)) \wedge (Q_1+Q_2 = Q_3),
$$
$$
k_1 \odot k_2 = k_3 \Longleftrightarrow \wedge_{i = 1}^3 (k_i = \nu(Q_i)) \wedge (Q_1\cdot Q_2 = Q_3).
$$
 Again, this is a general fact about computable algebraic structures. Nevertheless, it is convenient to give a sketch of a particular such enumeration $\nu:R \to \N$. Every polynomial $P \in \Z[a_1, \ldots,a_n]$ can be uniquely presented as an integer linear combination of pair-wise distinct monomials on commuting variables $a_1, \ldots,a_n$:
$$
P = \Sigma_{i=1}^d \gamma_i a_1^{\alpha_{i1} }\ldots a_n^{\alpha_{in}},
$$
where $0\neq \gamma_i \in \Z$ and $d, \alpha_i \in \N$. 
Hence, the polynomial $P$ is uniquely presented by a tuple
\begin{equation} \label{eq:nu(P)}
   u  = (\gamma_1,\alpha_{11}, \ldots\alpha_{1n}, \ldots , \gamma_d,\alpha_{d1}, \ldots, \alpha_{dn}).
\end{equation}
If $Q \in R$, then $Q$ can be uniquely presented in the form $Q = \frac{P}{{\bar a}^{\bar \beta}}$ for some $P \in \Z[a_1, \ldots,a_n]$ and some monomial ${\bar a}^{\bar \beta} = a_1^{\beta_1}\ldots a_n^{\beta_n},  \beta_i \in \N$, such that $gcd(P,{\bar a}^{\bar \beta}) = 1$. It follows that $Q$ can be uniquely presented by a pair of tuples $(u_Q,v_Q)$, where $u_Q = u, v_Q = \bar \beta$. Fix an arbitrary computable bijection 
$$
\tau:\bigcup_{i \in \N} \Z^i \to \N
$$
which enumerates all finite tuples of integers. Then $Q$ is uniquely presented by the pair $(\tau(u_Q),\tau(v_Q)) \in \N^2$ and the set of all such pairs is a computable subset of $\N^2$. For a fixed computable bijection $\tau_2:\N^2 \to \N$, put
$$\nu(Q) = \tau_2((\tau(u_Q),\tau(v_Q))).$$
By construction, the subset $\nu(R)$ is computable in $\N$, and given a number $k \in \nu(R)$, one can algorithmically find the corresponding  Laurent polynomial $Q$ such that $k = \nu(Q)$. Then it is easy to see that the operations $\oplus$ and $\odot$ on $\nu(R)$ are computable in $\N$. To finish the proof, recall from Section \ref{se:2.2} that all the computable operations or predicates on $\N$ are definable in $\N$, and $\N$ is definable in $\Z$. This shows that $R$ is absolutely interpretable in $\Z$.

Now, for the second step, we use notation from Section \ref{se:2.3}. Let  $p = \bar a \cdot \bar b$ be a fixed basis of $G$. Then every element $g \in G$  can be uniquely represented as 
$$
    g = {a_1}^{\gamma _1}\ldots {a_m}^{\gamma_m}{b_1}^{P_1}\ldots {b_n}^{P_n},
    $$
so it  can be uniquely represented by the tuple of its coordinates  with respect to $p$
  \begin{equation}\label{form1}
s_p(g) = (\gamma _1,\ldots \gamma_m,P_1, \ldots ,P_n).
\end{equation}
Hence,  by a tuple of integers 
 \begin{equation}\label{form1B}
s_{p,\nu}(g) = (\gamma _1,\ldots \gamma_m,\nu(P_1), \ldots ,\nu(P_n)).
\end{equation}
As mentioned above, the set $\nu(R)$ is definable in $\Z$; therefore, the set 
$$
S_{p,\nu}= \{s_{p,\nu}(g) \mid g \in G\} \subset \Z^{m+n}
$$ 
is also definable in $\Z$. 
Multiplication of normal forms in $G$ with respect to the basis $p$ is given by the formula (\ref{eq:mult-norm-forms}) from Section \ref{se:2.3}, which shows that the tuple of coordinates $s_p(gh)$  of the product of two elements $g, h \in G$ can be computed from the coordinates $s_p(g)$ and $s_p(h)$. The same holds for tuples $s_{p,\nu}(g)$, $s_{p,\nu}(h)$, and $s_{p,\nu}(gh)$. Hence, there is a formula $\Phi(\bar x_1, \bar x_2, \bar x_3)$ of arithmetic such that 
for any $\bar s_1, \bar s_2, \bar s_3 \in S_{p,\nu}$, the following holds: 
$$
\Z \models \Phi(\bar s_1, \bar s_2, \bar s_3) \Longleftrightarrow s_1 = s_{p,\nu}(g) \wedge s_2 = s_{p,\nu}(h) \wedge s_3 = s_{p,\nu}(gh)
$$
for some $g,h \in G$. Note that in any group, the inversion operation and the identity element are absolutely definable from the multiplication. Hence, the construction above gives an interpretation of $G$ in $\mathbb{Z}$, which we denote by $\Gamma$, so $G \simeq \Gamma(\mathbb{Z})$, with the coordinate map $\mu: s_{p,\nu}(g) \mapsto g$.

 \subsection{Interpretability of $\Z$ in $G$}
\label{se:3.3}

In this section, we show how to  interpret $\Z$ in $G$.  Throughout, we adopt the notation introduced in Section \ref{se:2.3}.

 \begin{lemma} \label{le:3}
    \begin{itemize}
        \item [1)] The subgroup $N$ is 0-definable in $G$ by the formula 
$$\phi_N (x)=\forall y \forall z  [x,[y,z]]=1.$$
\item [2)] The subgroup $A$ is definable in $G$ with parameters by the formula
$$
\phi_A(x) = [x,a] =1,
$$
where $1 \neq a$ is an arbitrary non-trivial element of $A$.
    \end{itemize}
\end{lemma}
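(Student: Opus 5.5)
Both parts reduce to the centralizer descriptions in Lemma \ref{le:2}.

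For 1), the formula $\phi_N(x)$ says exactly that $x$ commutes with every commutator $[y,z]$. Since $G'$ is generated by commutators, $\phi_N(x)$ holds if and only if $x \in C_G(G')$. By Lemma \ref{le:2}, item 2), $C_G(G') = N$, so $\phi_N(G) = N$ and no parameters are involved.

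It is worth checking why Lemma \ref{le:2}, item 2) applies. We need $G' \neq 1$, and more precisely that $G'$ contains a nontrivial element of $N$. By Lemma \ref{le:1}, item 2), $G' = N^{\Delta}$, which contains for instance $[b_1,a_1] = b_1^{a_1-1} \neq 1$. Here we use that $m,n \geq 1$ and that $N$ is a free $\ZZ A$-module, so $b_1^{a_1-1}$ is nontrivial. Item 1) of Lemma \ref{le:2} then gives $C_G(G') \subseteq C_G(b_1^{a_1-1}) = N$. The reverse inclusion holds because $N$ is abelian and $G' \leq N$.

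For 2), fix a nontrivial $a \in A$. The formula $\phi_A(x)$ with parameter $a$ defines the centralizer $C_G(a)$. By Lemma \ref{le:2}, item 3), $C_G(a) = A$.

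If one wants to spell out that centralizer argument, write $g = cf$ with $c \in A$ and $f \in N$. Since $c$ and $a$ commute, $[g,a]=1$ reduces to $f^a = f$. As $a$ acts on $N$ by a nontrivial shift of coordinates, this forces $f = 1$, because $f$ has finite support. Hence $g \in A$.

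The only slightly delicate point is this last step, namely that a nontrivial element of $A$ fixes no nontrivial finitely supported function. It holds because $A$ is torsion-free, so every orbit of $\langle a\rangle$ on $A$ is infinite, and a function constant on an infinite orbit must be trivial there. Everything else is a direct rewriting of the formulas as centralizer conditions.
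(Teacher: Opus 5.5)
Your proposal is correct and takes the same route as the paper, whose proof consists of the single remark that both parts follow immediately from the centralizer descriptions in Lemma~\ref{le:2}. What you add — that $G'$ contains the nontrivial element $b_1^{a_1-1}$, and the finite-support argument showing $C_G(a)=A$ — are exactly the details left implicit there, and both are verified correctly.
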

\begin{proof}
    It follows immediately from Lemma \ref{le:2}.
\end{proof}

\begin{lemma} \label{le:4} 
Let $1 \neq a \in A$ and $g \in G$. Then the following conditions are equivalent:
\begin{enumerate}
    \item [1)] $g$ belongs to the cyclic subgroup $\langle a\rangle$,

    \item [2)] $[g,a] = 1$ and there is $z \in N$ such that $[b_1,g] =  [z,a]$,
    \item [3)] $[g,a] = 1$ and for any $u \in N$ there exists $z \in N$ such that  $[u,g] =  [z,a]$.
\end{enumerate}
\end{lemma}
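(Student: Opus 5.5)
We will prove the cycle of implications 1) $\Rightarrow$ 3) $\Rightarrow$ 2) $\Rightarrow$ 1). By Lemma \ref{le:2}, item 3), the condition $[g,a]=1$ is equivalent to $g\in A$. So in every case we may assume $g\in A$ and work inside the $\Z A$-module $N$, using the multiplicative module notation of Section \ref{se:2.3}. For $u\in N$ and $c\in A$ we have $[u,c]=u^{-1}u^{c}=u^{c-1}$. Hence the equation $[u,g]=[z,a]$ becomes $u^{g-1}=z^{a-1}$ in $N$.

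For 1) $\Rightarrow$ 3), write $g=a^k$. If $k\ge 0$, then $g-1=(a-1)(1+a+\dots+a^{k-1})$ in $\Z A$. Putting $z=u^{1+a+\dots+a^{k-1}}$ gives $z^{a-1}=u^{g-1}$. If $k<0$, we use $a^{k}-1=-a^{k}(a^{-k}-1)$ and factor $a^{-k}-1$ in the same way; the case $k=0$ takes $z=1$. The implication 3) $\Rightarrow$ 2) is immediate: take $u=b_1$.

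For 2) $\Rightarrow$ 1), let $g\in A$ and suppose $b_1^{g-1}=z^{a-1}$ for some $z\in N$. Since $N$ is a free $\Z A$-module with basis $b_1,\dots,b_n$ (item 4) of Section \ref{se:2.3}), write $z=\prod_j b_j^{Q_j}$ and compare the $b_1$-coordinates. This gives the identity $g-1=(a-1)Q_1$ in the Laurent polynomial ring $R=\Z A$. So it suffices to show: if $a\neq 1$, $g\in A$, and $a-1$ divides $g-1$ in $R$, then $g\in\langle a\rangle$.

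This divisibility step is the main obstacle, and I would handle it as follows.
\begin{itemize}
\item Write $a=c^{k}$ with $c$ primitive in $A$ and $k\ge 1$. Choose a basis $c=e_1,e_2,\dots,e_m$ of $A$, which gives $R\cong\Z[e_1^{\pm1},\dots,e_m^{\pm1}]$.
\item Write $g=e_1^{t}e_2^{s_2}\cdots e_m^{s_m}$. Specialize $e_1\mapsto \zeta$, where $\zeta$ is a primitive $k$-th root of unity (if $k>1$), or any root of $e_1-1$ (if $k=1$). Then $a-1$ vanishes, so $g-1$ must vanish under every such specialization with $e_2,\dots,e_m$ arbitrary nonzero complex numbers.
\item Since the $e_i$ for $i\ge2$ are free, this forces $s_2=\dots=s_m=0$. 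Alternatively, set $e_1=1$ when $k=1$: then $g-1\mapsto e_2^{s_2}\cdots e_m^{s_m}-1$, which is a nonzero Laurent polynomial unless all $s_i=0$.
\item We are left with $g=c^{t}$ and $c^k-1\mid c^t-1$ in $\Z[c^{\pm1}]$. Every $k$-th root of unity must then be a $t$-th root of unity, so a primitive $k$-th root of unity forces $k\mid t$. Hence $g=a^{t/k}\in\langle a\rangle$.
\end{itemize}

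If $a$ is not primitive, it is cleaner to first specialize $e_1\mapsto\zeta$ with $\zeta$ a primitive $k$-th root of unity, which kills $a-1$. The image of $g-1$ is then $\zeta^{t}e_2^{s_2}\cdots e_m^{s_m}-1$, which is identically zero only if all $s_i=0$ and $\zeta^t=1$, i.e.\ $k\mid t$. This single specialization settles the divisibility claim and completes 2) $\Rightarrow$ 1).
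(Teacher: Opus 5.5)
Your proof is correct. Its skeleton matches the paper's: the cycle 1) $\Rightarrow$ 3) $\Rightarrow$ 2) $\Rightarrow$ 1), the same explicit choice of $z$, and the same reduction of 2) $\Rightarrow$ 1) to the divisibility $a-1\mid g-1$ in $\Z A$ (via $C_G(a)=A$ and freeness of $N$). You also use the same normalization, making $a$ a power of a basis element of $A$. Where you genuinely differ is the divisibility step itself.

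The paper clears denominators. It writes $g=g_1/g_2$ with $g_1,g_2$ monomials in disjoint variables and observes that $g_2Q_1$ is an ordinary polynomial $P_1$. It then compares leading and trailing terms of $P_1(a_1^{\beta}-1)$ in the deglex order and inducts on the terms of $P_1$. This forces one of $g_1,g_2$ to be $1$ and the other to be a power of $a_1^{\beta}$, with a case split on which of $g_1,g_2$ is larger.

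You instead apply the ring homomorphism $\Z A\to\mathbb{C}$ sending $e_1\mapsto\zeta$, a primitive $k$-th root of unity, and $e_2,\dots,e_m$ to arbitrary elements of $\mathbb{C}^{*}$. This kills $a-1$, so $\zeta^{t}e_2^{s_2}\cdots e_m^{s_m}=1$ identically on the torus. That gives $s_2=\dots=s_m=0$ and $k\mid t$ in one stroke. For $k=1$ you have $\zeta=1$, so your separate treatment of that case is unnecessary.

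What your route buys: it is shorter and less fragile. It works directly in the Laurent ring with no clearing of denominators, needs no case split, and avoids the term-by-term induction that the paper only sketches. It also adapts verbatim to divisibilities of the form $a^{\ell}-1\mid a^{k}-1$. What the paper's route buys: it stays entirely inside $\Z[a_1,\dots,a_m]$, using only uniqueness of polynomial expansions and a monomial order, with no extension of scalars.
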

\begin{proof}
1) $\Longrightarrow$ 3). Let $g \in \langle a\rangle$, so $g = a^\gamma$ for some $\gamma \in \Z$. 
Clearly $[g,a] =1$. Fix $u \in N$. If $\gamma >0$, then  take 
$$
z = u^{a^{\gamma -1} + a^{\gamma -2} + \ldots + a + 1}.
$$
In this case 
$$
[u,g] = u^{-1}u^{a^\gamma} = u^{a^{\gamma} -1} = 
u^{({a^{\gamma -1} + a^{\gamma -2} + \ldots  +  1})(a-1)} = [z,a],
$$
 as claimed.  If $\gamma <0$ then take 
 $$
z = u^{\frac{ -(a^{|\gamma| -1} + a^{|\gamma| -2} + \ldots +  1)}{a^{|\gamma|}}}.
$$
The case $\gamma = 0$ is obvious. 

Clearly, 3) $\Longrightarrow$ 2). To prove the lemma, it suffices to show that 2) $\Longrightarrow$ 1).  Let $g \in A$ such that 
\begin{equation} \label{eq:3}
[b_1,g] = [z,a]
\end{equation}
for some $g \in A$, $g \neq 1$ and $z \in N$. We need to show that $g=a^\gamma$ for some $\gamma \in \Z$. 

Note first that in a free abelian group $A$ there is an automorphism $\theta \in \mathrm {Aut} A$ that maps $a$ to $a_1^\beta$ for some positive $\beta \in \N$. Replacing the basis $\{a_1, \ldots, a_m\}$ of $A$ with basis $\{\theta(a_1), \dots, \theta(a_m)\}$, if necessary, we may assume that $a = a_1^\beta$.

If $z = 1$ then $g=1 = a^0$. Suppose $z \neq 1$ and write $z$ as  $z = \prod_i b_i^{Q_i}$ for some Laurent polynomials $Q_i \in \Z A$. Then  the equality (\ref{eq:3}) above takes the form 
$$
b_1^{g-1} = \Pi_i b_i^{Q_i(a_1^\beta-1)}.
$$
Since $b_1, \ldots,b_n$ is a basis of the free module $N$, one gets $Q_i = 0$ for all $i >1$. Hence 
$$
g-1 = Q_1(a_1^\beta-1)
$$
 in the ring $\Z A$. Since $g \in A$ it is a product $g = \Pi_{i =1} ^m a_i^{\gamma_i}$ for some $\gamma_i \in \Z$, we can write $g = \frac{g_1}{g_2}$, where $g_1$ is a product of all $a_i^{\gamma_i}$ with  $\gamma_i >0$ and $g_2$ is a product of all $a_i^{|\gamma_i|}$ with  $\gamma_i < 0$.  Note that $g_1$ and $g_2$ do not have common variables. 
 Now,
 $$
 \frac{g_1-g_2}{g_2} = Q_1(a_1^\beta-1)
 $$
and therefore $Q_1 = \frac{P_1}{g_2}$, where $P_1$ is an ordinary polynomial from $\Z[a_1, \ldots,a_m]$. Hence, we have an  equality 
\begin{equation} \label{eq:4}
    g_1 -g_2 = P_1(a_1^\beta-1)
\end{equation}
in the ring of polynomials $\Z[a_1, \ldots,a_m]$. Consider a monomial order in monomials of $\Z[a_1, \ldots,a_m]$, for example, the deglex order where monomials are compared first by their degree and if they have the same degree then with respect to a lexicographical ordering. Write $P_1$ as a sum of terms $M_j$ (i.e., monomials with integer coefficients)
 $$
 P_1 = M_1 + \ldots + M_k
 $$
 where $M_1 > M_2 > \ldots M_k$ in the deglex order. Then the equation (\ref{eq:4}) becomes
 \begin{equation} \label{eq:5a}
     g_1-g_2 = M_1a_1^\beta -M_1 +M_2a_1^\beta - M_2 + \ldots + M_ka_1^\beta -M_k.
 \end{equation}
 Clearly, $M_1a_1^\beta$ is the leading term on the right and $-M_k$ is the smallest one, so they both cannot cancel out on the right-hand side. Then, assuming $g_1 > g_2$  we have $g_1 = M_1a_1^\beta$ and $g_2 = M_k$. Observe, that $g_1$ contains $a_1$ and $g_2$ does not.
  Now, the equality (\ref{eq:5a}), after canceling equal terms, turns into  
 $$
 M_1 - g_2 = M_2a_1^\beta - M_2 + \ldots -M_k.
 $$
which again has the  form (\ref{eq:5a}). By induction, $M_i = M_{i+1}a_1^\beta$ for all $i = 1, \ldots,k-1$. In particular, $g_2$ divides $g_1$. Since $g_1$ and $g_2$ do not have common divisors besides $1$ and $-1$, we deduce that $g_2 = \pm1$, and from $g = \frac{g_1}{g_2}$ we have $g_2 = 1$. It follows that $P_1 = a_1^{\beta(k-1)} + \ldots a_1^\beta +1$ and $g = g_1 = a^{k\beta}$ for some $k >0$. 
However, if we assume above that $g_2$ contains $a_1$. Then a similar argument shows that in this case $g_1 = 1$ and $g_2 = a_1^{k\beta}$ for some $k>0$, so $g = a_1^{-k\beta}$, as claimed.

\end{proof}

The two distinct yet equivalent conditions (2) and (3) in Lemma \ref{le:4} make it possible to define the cyclic subgroup $\langle a \rangle$ of $G$ for any $a \in A, a \neq 1,$ using different formulas. This flexibility is particularly useful when investigating the decidability of equations in $G$ or addressing other problems where the complexity of the defining formulas plays a crucial role. We summarize this observation in the following corollary.
\begin{cor} \label{co:1} The following holds in $G$:
 
    \begin{enumerate}
        \item There is a $\forall \exists$ positive formula $Cyc_A(x,y)$ of the group theory language such that for any $a \in A$ the formula $Cyc_A(a,y)$ defines the cyclic subgroup $\langle a\rangle$ in $G$.
\item There is a Diophantine formula $Cyc_A(x,y,b_1)$ with the parameter $b_1$ such that for any $1\neq a \in A$ the formula $Cyc_A(a,y,b_1)$ defines the cyclic subgroup $\langle a\rangle$ in $G$.

        \end{enumerate}
\end{cor}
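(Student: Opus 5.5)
Both parts come from Lemma \ref{le:4}, once the conditions ``$y\in A$'' and ``$z\in N$'' appearing there are expressed by formulas of the right shape. For $y$ we use the centralizer: by Lemma \ref{le:2} 3), $A=C_G(a)$ for every $1\neq a\in A$, so $y\in A$ is equivalent to the atomic formula $[y,a]=1$. For $z\in N$ we use Lemma \ref{le:3} 1): $N$ is defined by the universal positive formula $\forall u\forall v\,[z,[u,v]]=1$. Where an existential formula is needed, we will replace this by a description of $N$ that uses the parameter $b_1$.

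For part 1 we take condition 3) of Lemma \ref{le:4} and set
$$Cyc_A(x,y)\;=\;[y,x]=1\;\wedge\;\forall u\,\exists z\,\bigl(\forall s\forall t\,([u,[s,t]]=1)\rightarrow [z,[s,t]]=1\wedge [u,y]=[z,x]\bigr).$$
This is not in the required form because of the implication. To remove it we restrict $u$ to $G'$ instead of $N$, using the following observation. If $y\in A$ and $u\in N$, then $[u,y]\in G'=N^\Delta$. Also, when $x=a$, every element of the form $[z,a]$ with $z\in N$ lies in $N^{a-1}$. So we quantify only over commutators $u=[s,t]$, which lie in $N$ automatically, and we produce $z$ as an explicit commutator as well. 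The formula becomes
$$[y,x]=1\wedge\forall s\forall t\,\exists z_1\exists z_2\;[[s,t],y]=[[z_1,z_2],x].$$
This formula is positive and $\forall\exists$.

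We then have to check that it still defines $\langle a\rangle$. For the forward direction, let $y=a^\gamma$. In the proof of Lemma \ref{le:4} the witness is $z=u^{Q}$ for a suitable $Q\in\Z A$. If $u=[s,t]$ with $s,t\in G$, then $u^Q$ lies in $N^\Delta$. Every element $w$ of $N^\Delta=[N,A]$ is a single commutator, since $w=\prod_i b_i^{\sum_j R_{ij}(a_j-1)}$. We still need a genuine single commutator of this form; the plan is to use $m$ commutators, or to pick $z_1\in A$ and $z_2\in N$ suitably, and this step needs care. For the converse, take $s=b_1$ and $t=a_1$, so $u=b_1^{a_1-1}$. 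Then $y\in A$, and the argument of Lemma \ref{le:4} goes through: the same cancellation of leading terms applies to $(a_1-1)(y-1)=Q(a-1)$, because $a_1-1$ is coprime to $a-1$ up to the reduction $a=a_1^\beta$. We will have to redo that computation with the extra factor $a_1-1$.

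For part 2 we take condition 2) of Lemma \ref{le:4}. It is existential except for $z\in N$, which we replace by the witness $z=[b_1^{w},x']$ for $w\in G$, or more simply we use $z\in N^{a-1}$ scaled appropriately. The formula is
$$Cyc_A(x,y,b_1)=[y,x]=1\wedge\exists z\,\bigl([b_1,y]=[z,x]\wedge [z,b_1]=1\bigr).$$
The identity $[z,b_1]=1$ forces $z\in C_G(b_1)=N$ by Lemma \ref{le:2} 1). A conjunction of equations under an existential quantifier is Diophantine, because finitely many equations combine into a single one in a group containing a non-abelian free subgroup, or else we simply accept systems. The main obstacle is the single-commutator witness in part 1; if it fails, we fall back to quantifying $u$ over $\{b_i^{\,g}\}$-type generators, which suffices because $u\mapsto[u,y]$ and $z\mapsto[z,a]$ are module homomorphisms.
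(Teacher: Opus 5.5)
Your part 2 is correct and matches the paper. The conjunct $[z,b_1]=1$ forces $z\in C_G(b_1)=N$ by Lemma \ref{le:2}, which is exactly the device in the system (\ref{eq:6}) of Corollary \ref{co:2}, and it turns condition 2) of Lemma \ref{le:4} into a system of equations. Your remark about merging equations via a free subgroup does not apply to the metabelian group $G$; just keep the system, as the paper does.

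For part 1 you rightly notice that relativizing condition 3) of Lemma \ref{le:4} to $N$ via $\phi_N$, as the paper's one-line proof suggests, does not literally give a positive $\forall\exists$ formula. Your formula $[y,x]=1\wedge\forall s\forall t\,\exists z_1\exists z_2\,[[s,t],y]=[[z_1,z_2],x]$ is a good repair: it does define $\langle a\rangle$ for $a\neq 1$. (For $a=1$ it defines $N$, but so does the formula from Lemma \ref{le:4}.) However, neither direction of your verification works as written, and you leave the key step open.

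\emph{Forward direction.} Your claim that every element of $N^\Delta$ is a single commutator is false for $m\ge 3$. For $c_i\in A$ and $f_i\in N$ one has $[c_1f_1,c_2f_2]=f_1^{c_2-1}f_2^{-(c_1-1)}$, so single commutators fill only the union of the submodules $N^{c_1-1}N^{c_2-1}$. For example, $b_1^{P}$ with $P=(a_1-1)+2(a_2-1)+4(a_3-1)$ lies in none of them. Indeed, $P$ would have to vanish on a one-parameter subgroup $\tau\mapsto(\tau^{p_1},\ldots,\tau^{p_m})$ of the torus with $(p_1,p_2,p_3)\neq 0$, but $\tau^{p_1}+2\tau^{p_2}+4\tau^{p_3}-7\not\equiv 0$. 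What you actually need is weaker and follows from the same identity: $[c_1f_1,c_2f_2]^Q=[c_1f_1^Q,c_2f_2^Q]$ for all $Q\in\Z A$. Hence the witness $z=u^Q$, with $Q=(a^\gamma-1)/(a-1)$, is a single commutator whenever $u=[s,t]$ is.

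\emph{Converse.} The instance $s=b_1$, $t=a_1$ does not suffice. If $a=a_1$ and $m\ge 2$, then $y=a_2\notin\langle a\rangle$ passes it, because $[[b_1,a_1],a_2]=[[b_1,a_2],a_1]$. Moreover, after normalizing $a=a_1^\beta$, the element $a_1-1$ divides $a-1$, so the claimed coprimality is false. The test element must depend on $a$:
\begin{itemize}
\item For $m\ge 2$, write $a=c_1^\beta$ with $\beta\ge 1$ and $c_1,\ldots,c_m$ a basis of $A$, and take $s=b_1$, $t=c_2$. Then $c_2-1$ is coprime to $a-1=\prod_{d\mid\beta}\Phi_d(c_1)$ (cyclotomic factors) in the UFD $\Z A$. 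Comparing $b_1$-coordinates in $(c_2-1)(y-1)=Q_1(a-1)$ gives $y-1=Q(a-1)$, which is condition 2) of Lemma \ref{le:4} with $z=b_1^Q$.
\item For $m=1$, keep $s=b_1$, $t=a_1$ but use that $[z_1,z_2]\in G'=N^{a_1-1}$. Then $Q_1$ is divisible by $a_1-1$, and that factor cancels.
\end{itemize}
Finally, your fallback of quantifying over elements $b_i^g$ is unavailable, since the formula in part 1 must be parameter-free.
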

\begin{proof}
    It follows directly from Lemma \ref{le:4} and Lemma \ref{le:3}, the latter is needed to define $N$ in $G$ without parameters.
\end{proof}

\begin{lemma} \label{le:basis}
There is a formula $Base_{top}(x_1, \ldots,x_m)$ of group language (without parameters) such that for any $c_1, \ldots,c_m \in G$   $Base_{top}(c_1, \ldots,c_m)$ holds in $G$ if and only if the tuples $(c_1, \ldots,c_m)$ and $(a_1, \ldots,a_m)$ are automorphically equivalent in $G$, i.e., $(c_1, \ldots,c_m)$ is a top basis of $G$.
\end{lemma}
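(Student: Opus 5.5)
The plan is to reduce ``top basis'' to a group-theoretic condition that is easy to express, and then write that condition down using $\phi_N$ from Lemma \ref{le:3} and the centralizer/commutator trick from Lemma \ref{le:4}.

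\emph{Step 1: algebraic characterization.} I will show that $(c_1,\ldots,c_m)$ is a top basis if and only if (i) the $c_i$ pairwise commute and (ii) their images in $G/N\simeq A$ generate $G/N$.

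\emph{Necessity.} This is clear from Definition \ref{de:bases}, condition 2).

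\emph{Sufficiency.} Assume (i) and (ii).
\begin{itemize}
\item Since $G/N\simeq\ZZ^m$ is Hopfian, $m$ generators of it form a basis. Hence the images $\bar c_i$ form a basis of $G/N$.
\item $C=\langle c_1,\ldots,c_m\rangle$ is abelian by (i). It maps onto $G/N$, and an abelian group generated by $m$ elements with image $\ZZ^m$ has trivial kernel. So $C\cap N=1$, $C\simeq A$, and $G=CN$.
\item Write $c_i=a'_i f_i$ with $a'_i\in A$ and $f_i\in N$. Since $N$ is abelian, $c_i$ and $a'_i$ act identically on $N$ by conjugation. Because $(a'_1,\ldots,a'_m)$ is a basis of $A$, $\ZZ[c_1^{\pm1},\ldots,c_m^{\pm1}]$ acting on $N$ is just $\ZZ A$ reparametrized. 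Hence $N$ is a free $\ZZ[c_i^{\pm1}]$-module with basis $b_1,\ldots,b_n$.
\end{itemize}
Thus condition 2) of Definition \ref{de:bases} holds with $u_j=b_j$.

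\emph{Step 2: defining $\langle c\rangle$ modulo $N$.} For $c\notin N$ put
$$S(c)=\{g\mid [g,c]=1 \ \wedge\ \forall u\,(\phi_N(u)\to \exists z\,(\phi_N(z)\wedge [u,g]=[z,c]))\}.$$
I need the image of $S(c)$ in $G/N$ to be exactly $\langle \bar c\rangle$. Write $c=af$ with $1\ne a\in A$.
\begin{itemize}
\item \emph{Action of $c$ on $N$.} Conjugation by $c$ on $N$ equals conjugation by $a$. Hence $[z,c]=z^{a-1}$ for $z\in N$.
\item \emph{Action of $g$ on $N$.} Write $g=a'f'$ with $a'\in A$, $f'\in N$. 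Then $[u,g]=u^{a'-1}$ for $u\in N$.
\item \emph{Reduction to Lemma \ref{le:4}.} The second clause of $S(c)$ therefore says $(a'-1)\in(a-1)\ZZ A$ after taking $u=b_1$. The polynomial computation in the proof of Lemma \ref{le:4} (2)$\Rightarrow$1)) depends only on this ring identity. It gives $a'\in\langle a\rangle$, so $S(c)N/N\subseteq\langle\bar c\rangle$.
\item \emph{Reverse inclusion.} The powers $c^\gamma$ commute with $c$. They satisfy the clause via the same geometric-sum choice of $z$ as in 1)$\Rightarrow$3) of Lemma \ref{le:4}, with $a^\gamma-1=(a-1)(\cdots)$. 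So $\langle \bar c\rangle\subseteq S(c)N/N$.
\end{itemize}
The commutation clause $[g,c]=1$ is harmless in both directions: it is not needed for the upper bound, and it holds for the powers $c^\gamma$.

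\emph{Step 3: the formula.} Let $Base_{top}(x_1,\ldots,x_m)$ be
$$\bigwedge_i\neg\phi_N(x_i)\wedge\bigwedge_{i<j}[x_i,x_j]=1\wedge \forall x\,\exists y_1\ldots y_m\Big(\bigwedge_i y_i\in S(x_i)\wedge \phi_N(x^{-1}y_1\cdots y_m)\Big).$$
By Step 2, the last conjunct says that the $\bar c_i$ generate $G/N$. By Step 1, the whole formula then holds exactly for top bases, and it uses no parameters.

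The main obstacle is Step 2. Lemma \ref{le:4} was stated only for $a\in A$, and here $c$ lies in an unknown coset $aN$. The point to verify carefully is that both the membership test and the argument in Lemma \ref{le:4} depend only on the conjugation action on $N$, which sees only $a$. One must also handle the case where $a$ is a proper power; there the argument via $\theta\in\Aut A$ with $a\mapsto a_1^\beta$ still applies.
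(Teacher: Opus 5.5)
Your proof is correct, but it takes a different route from the paper's.

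\emph{The paper's route.} The paper does not isolate an algebraic criterion first. It writes into $Base_{top}$ a list of structural conditions on the sets $H_i=\{g\mid Cyc_A(c_i,g)\}$, which are exactly your $S(c_i)$: each $H_i$ is an abelian subgroup containing $c_i$, the $H_i$ commute and are independent, $H\cap N=1$, and $G=HN$. It then works backwards. It shows that $C=\prod C_i$ is a complement to $N$ and that each $C_i$ is cyclic with a generator $d_iu_i$. It builds an explicit automorphism (from $\Aut A$ followed by a twist by $N$) carrying $(a_i)$ to $(d_iu_i)$. Only then, by transporting Lemma \ref{le:4} along this automorphism, does it get $S(c_i)=\langle c_i\rangle$, and hence $c_i=(d_iu_i)^{\pm1}$.

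\emph{Your route.} You first prove that a tuple is a top basis if and only if its entries commute and their images generate $G/N$. Sufficiency uses only that $\ZZ^m$ is Hopfian and that conjugation on the abelian group $N$ factors through $G/N$. After that, you need only the image of $S(c)$ modulo $N$, and you compute it for an arbitrary $c\notin N$. The point is that the defining clause of $S(c)$ sees $c$ and $g$ only through their conjugation action on $N$. This reduces membership to the ring condition $a'-1\in(a-1)\ZZ A$. Incidentally, that condition gives $a'\in\langle a\rangle$ at once by applying $\ZZ A\to\ZZ[A/\langle a\rangle]$, which is quicker than the deglex computation in Lemma \ref{le:4}.

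\emph{What each approach buys.} Yours never has to determine $S(c)$ exactly for $c$ outside a known complement, which is precisely what forces the paper's detour through the explicit automorphism. It also shows that the paper's conditions 2)--5) are more than is needed, and it gives a shorter formula. Nothing is lost for the later sections. For any genuine top basis, $S(c_1)=\langle c_1\rangle$ still holds, because the parameter-free formula $Cyc_A$ is invariant under automorphisms.
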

\begin{proof}
    By Corollary \ref{co:1} the formula $Cyc_A(a_i,y)$ defines in $G$ the cyclic subgroup $\langle a_i\rangle $. It follows that the tuple $(a_1, \ldots,a_m)$ satisfies all the conditions below:

\begin{enumerate}
    \item [1)] $a_i \in G \smallsetminus N$ and $[a_i,a_j] = 1$ for all $1 \leq i,j \leq m$.
    
    \item [2)] The set $\{g \in G \mid G\models Cyc_A(a_i,g)\}$ is an abelian subgroup of $G$, containing $a_i \neq 1$. Denote this subgroup by $H_i$.
    \item [3)] $[H_i,H_j] = 1$, so $H = H_1\cdot \ldots \cdot H_m$ is an abelian group.
    \item [4)]  $H_i \cap H_{j_1} \cdot \ldots \cdot H_{j_{m-1}} = 1$ for any $1\leq j_k \leq m, j_k \neq i$.
    \item [5)] $H \cap N = 1$.
    \item [6)] $G = HN$.
\end{enumerate}
It is clear that each one of these conditions can be written by a formula of group language on elements $a_1, \ldots,a_m$. Replacing in this formula every occurence of the  element $a_i$ by a variable $x_i$ we obtain a formula of group language that we denote by $Base_{top}(x_1, \ldots,x_m)$. 

Observe, that $G \models Base_{top}(a_1, \ldots,a_m)$, in fact, this is true for every basis $(a_1', \ldots,a_m')$ of $A$, because from the beginning $(a_1, \ldots,a_m)$  was chosen as an arbitrary basis of $A$. Suppose that $G \models Base_{top}(c_1, \ldots,c_m)$ for some elements $c_1, \ldots,c_m \in G$. Then for every $c_i$ the formula $Cyc_A(c_i,y)$ defines in $G$ an abelian subgroup $C_i$, which contains a non-trivial element  $c_i$.  These subgroups satisfy all the conditions 1)-6), hence 

\begin{enumerate}
\item [7)] $C = C_1 \ldots C_m $ is an abelian subgroup of $G$. 
\item [8)] $C \simeq C_1 \times \ldots C_m \simeq A$.
\item [9)] $G = N \rtimes C$.
\end{enumerate}

From 7) it follows that $C$ is a free abelian of rank $m$, so every group $C_i$ is cyclic. Denote by $d_iu_i$, where $d_i \in A, u_i \in N$, a generator of $C_i$. It is easy to see that $d_1, \dots,d_m$ form a basis of $A$. Therefore, there is an automorphism $\psi \in Aut(A)$ such that $\psi(a_i) = d_i, i = 1, \ldots,m$. We claim that the automorphism $\psi$ extends to an automorphism $\hat \psi$ of $G$. Clearly, $\psi$ extends linearly to an automorphism $\lambda_\psi$ of $\Z A$.  Now, $\lambda_\psi$ gives rise to a semilinear bijection $\tilde \psi$ of the $\Z A$-module $N$, where for $u = \Pi b_i^{P_i} \in N, P_i \in \Z A$, 
$$
{\tilde \psi}(u)  = \Pi b_i^{\lambda_\psi(P_i)}.
$$
Now one can define an a map $\hat \psi$ on $G$ by $\hat \psi (au) = \psi(a){\tilde \psi}(u)$, where $a \in A, u \in N$. Clearly, ${\tilde \psi}$ is a bijection on $G$. Checking that it is a homomorphism  of $G$ is straightforward if using the formula (\ref{eq:mult-G}) of the multiplication in $G$.

The isomorphism ${\tilde \psi}^{-1}$ maps $d_iu_i$ to $a_iv_i$, where $v_i  =  {\tilde \psi}^{-1}(u_i) \in N$, so $(a_1v_1, \ldots,a_mv_m)$ forms a basis of a free abelian group $A'$, which is isomorphic to $A$ via a map $\theta: a_i \to a_iv_i, i = 1, \ldots,n$. Note, that the elements $a_iv_i$ act by conjugation on $N$ precisely the same way as elements $a_i$, hence the map $au \to \theta(a)u$, for $a \in A, u \in N$ is an isomorphism of $G$. This shows that the tuples $(a_1, \ldots,a_m)$,  $(a_1,v_1, \ldots, a_mv_m)$, and  $(d_1u_1, \ldots,d_mu_m)$ are automorphically equivalent in $G$. 

Recall that $c_i \in C_i = \langle d_iu_i\rangle$, so $c_i = (d_iu_i)^{\gamma_i}$, for some $\gamma_i \in \Z$. Now we apply Lemma \ref{le:4} where the basis $a_1, \ldots, a_m$, hence the group $A$, is replaced with the basis $(d_1u_1, \ldots,d_mu_m)$ and the group $A'$, and the element $a$ is replaced by $c_i \in A'$. This gives us that the formula $Cyc(c_i,y)$ defines in $G$ the cyclic subgroup $\langle c_i \rangle$ which is equal to $C_i$. Therefore $c_i = (d_iu_i)^{\pm 1}$, so there is an automorphism of $G$ that maps $c_i$ to $d_iu_i$ for $i = 1, \ldots,m$. This implies that the tuples $(c_1, \ldots,c_m)$ and $(a_1, \ldots,a_m)$ are automorphically equivalent in $G$, as required.

\end{proof}

\begin{lemma} \label{le:5}
   Let $1\neq a \in A$ and  $k,\ell \in \Z$. Then the following conditions are equivalent:
   \begin{enumerate}
       \item [1)] $k$ divides $\ell$,
       \item [2)] there is $z \in N$ such that $[b_1,a^k] = [z,a^\ell]$,
   \item [3)] for any $u \in N$ there exists  $z\in N$ such that $[u,a^k] = [z,a^\ell].$
   \end{enumerate}
\end{lemma}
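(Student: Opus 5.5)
The plan is to reduce everything to Lemma \ref{le:4}, applied with the element $a$ replaced by the power $a^\ell$. First, a caution about the direction of divisibility. Since $[u,a^k]=u^{a^k-1}$ and $[z,a^\ell]=z^{a^\ell-1}$, condition 2) says that $a^\ell-1$ divides $a^k-1$ in $\Z A$. This happens exactly when $\ell$ divides $k$. For instance, $k=0$, $\ell\neq 0$ satisfies 2) with $z=1$. So I will read 1) as "$\ell \mid k$", i.e.\ $k\in\ell\Z$, and prove the equivalence with that reading. The degenerate case $\ell=0$ is handled directly: then $[z,a^0]=1$, so 2) and 3) hold iff $u^{a^k-1}=1$. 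Since $N$ is a free $\Z A$-module and $\Z A$ is a domain, this means $a^k=1$, i.e.\ $k=0$. From now on assume $\ell\neq 0$, so $a^\ell\neq 1$ because $A$ is torsion-free.

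For 1) $\Rightarrow$ 3), write $k=\ell q$ and set $a'=a^\ell$, so that $a^k=(a')^q\in\langle a'\rangle$. The implication 1) $\Rightarrow$ 3) of Lemma \ref{le:4}, applied to $a'$ and $g=(a')^q$, gives for every $u\in N$ an element $z\in N$ with $[u,a^k]=[z,a']=[z,a^\ell]$. Concretely, $z=u^{1+a'+\dots+(a')^{q-1}}$ for $q>0$, with the analogous negative-exponent expression for $q<0$. The implication 3) $\Rightarrow$ 2) is trivial, taking $u=b_1$.

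For 2) $\Rightarrow$ 1), suppose $[b_1,a^k]=[z,a^\ell]$ for some $z\in N$. Apply the implication 2) $\Rightarrow$ 1) of Lemma \ref{le:4} with the nontrivial element $a'=a^\ell\in A$ in place of $a$ and with $g=a^k$. Its hypotheses hold: $[g,a']=1$ since $A$ is abelian, and $[b_1,g]=[z,a']$ by assumption. Hence $a^k\in\langle a^\ell\rangle$, say $a^k=a^{\ell q}$. Since $A$ is torsion-free and $a\neq 1$, we get $k=\ell q$, so $\ell\mid k$.

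I expect no real obstacle, since the hard polynomial-divisibility argument is already inside Lemma \ref{le:4}. The one point to check is that Lemma \ref{le:4} was proved for an arbitrary nontrivial $a\in A$, including non-primitive elements. This holds: its proof first moves $a$ to $a_1^\beta$ by an automorphism of $A$, so it applies verbatim to $a^\ell$.
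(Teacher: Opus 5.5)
Your argument is correct, and you are right to flip condition 1) to $\ell \mid k$. As printed the lemma is false: for $\ell=1$, $k=2$ it contradicts Lemma~\ref{le:4}, and for $\ell=0$, $k=1$ it would assert $[b_1,a]=1$. The same swap carries over harmlessly to Corollary~\ref{co:2}, where one just exchanges the roles of $a^k$ and $a^\ell$.

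Applying Lemma~\ref{le:4} to the nontrivial element $a^\ell$, with $\ell=0$ treated separately, is exactly what the paper's one-line proof (``similar to Lemma~\ref{le:4}'') amounts to. So your route is the paper's, written out as a direct reduction instead of a rerun of the monomial-order argument.
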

\begin{proof}
    The argument is similar to the one in Lemma \ref{le:4}.
\end{proof}

\begin{cor} \label{co:2} The following holds in $G$:
 
    \begin{enumerate}
        \item There exists an $\forall \exists$ and positive formula $Div_A(x,y_1,y_2)$ of the group theory language such that for any $a \in A$  and any $k,\ell \in \Z$, the formula $Div_A(a,a^k,a^\ell)$ holds in $G$ if and only if $k$ divides $\ell$. 
\item For any $1\neq a \in A$  and any $k,\ell \in \Z$ a system of equations
\begin{equation} \label{eq:6}
[z,b_1] =1 \wedge [b_1,a^k] = [z,a^\ell]
\end{equation}
with parameters $a,b_1$ has a solution $z$ in $G$ if and only if $k$ divides $\ell$. 
 \end{enumerate}
\end{cor}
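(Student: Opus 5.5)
The plan is to derive both items directly from Lemma \ref{le:5}, turning its conditions 3) and 2) into formulas. The only work is to make the formulas have the required syntactic shape: positive $\forall\exists$ in item 1, and a system of equations in item 2. In each case this means removing or encoding the restriction that the variables range over $N$.

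For item 1, I would first check that the restriction to $N$ can simply be dropped. Write an arbitrary element of $G$ as $cf$ with $c \in A$, $f \in N$, and take $y \in A$. Since $A$ is abelian, $c$ commutes with $y$, so
$$
[cf,y] = f^{-1}c^{-1}y^{-1}cfy = f^{-1}y^{-1}fy = [f,y].
$$
Hence for $y_1 = a^k$ and $y_2 = a^\ell$, the commutators $[u,y_1]$ with $u \in G$ are exactly the commutators $[u,y_1]$ with $u \in N$, and the same holds for $[z,y_2]$. It follows that condition 3) of Lemma \ref{le:5} is equivalent to
$$
G \models \forall u\, \exists z\ \bigl([u,y_1] = [z,y_2]\bigr)
$$
with $y_1 = a^k$, $y_2 = a^\ell$. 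So I would put $Div_A(x,y_1,y_2) := \forall u\,\exists z\,([u,y_1]=[z,y_2])$. This is a positive $\forall\exists$ formula, with $x$ a dummy variable kept to match the statement. By Lemma \ref{le:5} it holds for $(a,a^k,a^\ell)$ if and only if $k \mid \ell$. The degenerate case $a = 1$ is excluded in Lemma \ref{le:5}, so I would restrict to $a \neq 1$, as in Corollary \ref{co:1}.

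For item 2, the system (\ref{eq:6}) has the two equations $[z,b_1]=1$ and $[b_1,a^k]=[z,a^\ell]$. By Lemma \ref{le:2}, item 1), we have $C_G(b_1)=N$, so the first equation says exactly $z \in N$. The whole system is therefore solvable if and only if condition 2) of Lemma \ref{le:5} holds, which happens if and only if $k \mid \ell$.

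The only point needing any care is the elimination of the clause $\phi_N$. Using $\phi_N$ from Lemma \ref{le:3} directly would put an implication inside the $\forall$, which is not positive, and would also introduce an extra universal block, giving a $\forall\exists\forall$ formula. The commutator identity above removes this clause, so I expect no real obstacle beyond invoking Lemma \ref{le:5}.
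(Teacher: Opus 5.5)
Your route is the paper's own: item 1 comes from condition 3) of Lemma \ref{le:5} and item 2 from condition 2). Your additions are correct, and the paper leaves them implicit. The identity $[cf,y]=[f,y]$ for $c,y\in A$, $f\in N$ lets you drop the clause ``$u,z\in N$'', so the formula really is positive $\forall\exists$. The centralizer $C_G(b_1)=N$ is exactly what the first equation of (\ref{eq:6}) encodes. And $a\neq 1$ must be assumed.

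The step that fails is the one you import unchecked: Lemma \ref{le:5}, and with it the corollary, has $k$ and $\ell$ the wrong way round. For $z=\prod_i b_i^{Q_i}\in N$ we have $[b_1,a^k]=b_1^{a^k-1}$ and $[z,a^\ell]=z^{a^\ell-1}$. Since $N$ is free over the domain $\Z A$, the equation $[b_1,a^k]=[z,a^\ell]$ amounts to $Q_1(a^\ell-1)=a^k-1$ and $Q_i(a^\ell-1)=0$ for $i>1$. Hence (\ref{eq:6}) is solvable iff $a^\ell-1$ divides $a^k-1$ in $\Z A$, i.e.\ iff $\ell\mid k$.

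Two concrete checks:
\begin{itemize}
\item $k=1$, $\ell=2$ satisfies $k\mid\ell$, but a solution would force $Q_1(a+1)=1$. This is impossible, since the augmentation $\Z A\to\Z$ sends $a+1$ to $2$.
\item $k=2$, $\ell=1$ is solved by $z=b_1^{a+1}$, although $2\nmid 1$.
\end{itemize}
The same computation shows that your formula $\forall u\,\exists z\,([u,a^k]=[z,a^\ell])$ holds iff $\ell\mid k$. If $\ell\mid k$, take $z=u^{(a^k-1)/(a^\ell-1)}$; otherwise $u=b_1$ has no witness.

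So the statement as written is false, and the paper's one-line proof has the same defect because it cites the same misstated lemma. The repair is cosmetic. Either conclude ``$\ell$ divides $k$'', or swap the exponents and use $[b_1,a^\ell]=[z,a^k]$. The latter is the genuine analogue of Lemma \ref{le:4}, where the exponent sitting next to $z$ is the divisor. The later interpretation of $\langle \Z;+,\mid\rangle$ is unaffected once the arguments of $\mid^*$ are swapped.

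You should state the corrected equivalence and prove it by the direct computation above, rather than by citing Lemma \ref{le:5}, whose proof in the paper is only ``similar to Lemma \ref{le:4}''. The direction $a^\ell-1\mid a^k-1\Rightarrow \ell\mid k$ needs a short argument. For example:
\begin{itemize}
\item reduce to $a=a_1^\beta$ as in Lemma \ref{le:4};
\item specialize $a_2,\dots,a_m$ to $1$;
\item observe that a primitive $\beta|\ell|$-th root of unity must then be a root of $x^{\beta k}-1$.
\end{itemize}
The cases $k=0$ or $\ell=0$ are trivial.
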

\begin{proof}
   The first statement follows from item 3) in Lemma \ref{le:5}, and the second statement follows from item 2) in Lemma \ref{le:5}.  
\end{proof}

\begin{prop}
Let $a \in A, a\neq 1$. Then the ring $\Z$ is interpretable in $G$ with the parameter $a$.
\end{prop}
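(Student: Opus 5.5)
The plan is to take as the domain of the interpretation the cyclic subgroup $\langle a\rangle$, identified with $\Z$ via $a^k\mapsto k$. By Corollary \ref{co:1} this subgroup is defined by $Cyc_A(a,y)$ with the single parameter $a$. Addition is group multiplication, $a^k\cdot a^\ell=a^{k+\ell}$, so it is definable for free. The constants $0$ and $1$ are interpreted by $1$ and $a$. It remains to interpret multiplication, and for this we use the divisibility predicate from Corollary \ref{co:2}: the formula $Div_A(a,a^k,a^\ell)$ holds if and only if $k\mid \ell$. Note that Corollary \ref{co:2} item 1 is stated for any $a\in A$; if one prefers to avoid the parameter $b_1$, one uses the $\forall\exists$ version, which quantifies over $N$, and $N$ is $0$-definable by Lemma \ref{le:3}.

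So the interpretation gives the structure $\langle \Z;+,\mid,0,1\rangle$, and it suffices to recall that multiplication is definable in this structure. This is a classical result of J.~Robinson. The route I would write out is as follows. First, define the successor $x\mapsto x+1$ and $\operatorname{lcm}$ from divisibility: $\operatorname{lcm}(x,y)$ is the element $m$ such that $x\mid m$, $y\mid m$, and $m$ divides every common multiple, up to sign. Next, define squaring for $x\neq 0,\pm1$ using the identity
\[
\operatorname{lcm}(x,x+1)=x^2+x ,
\]
up to sign, since $x$ and $x+1$ are coprime. This gives $x^2=\operatorname{lcm}(x,x+1)-x$. The small cases $x\in\{0,\pm1\}$ are handled separately, since these elements are definable. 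Sign ambiguity is resolved by defining nonnegativity, for instance by Lagrange's four squares once squaring is available; one can bootstrap carefully, or simply handle $\pm$ by a case distinction. Finally, define
\[
xy = \frac{(x+y)^2-x^2-y^2}{2},
\]
where halving is definable because $z=2w$ is expressed as $z=w+w$.

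Putting this together, every operation of the ring $\Z$ is definable on $\langle a\rangle$ with parameter $a$, and the coordinate map $a^k\mapsto k$ is an isomorphism. The only delicate step is the sign issue in defining $\operatorname{lcm}$ through divisibility, since divisibility only determines $\operatorname{lcm}$ up to $\pm$. I would handle it by working with $x^2$ characterized as the element $s$ with $s\in\{\operatorname{lcm}-x\}$ over both sign choices, together with the condition that $s=t+t+\dots$ is a square. Alternatively, and more simply, I would cite the standard fact that $\langle\Z;+,\mid\rangle$ defines multiplication and omit the bookkeeping.
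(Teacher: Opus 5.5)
Your proposal is the paper's proof: you interpret $\Z$ on $\langle a\rangle$ via $Cyc_A(a,y)$, use group multiplication for addition and $Div_A$ for divisibility, and invoke Robinson's definability of multiplication from $+$ and $\mid$. Two of your choices are improvements: interpreting the constant $1$ by $a$ is genuinely needed, since $x\mapsto -x$ is an automorphism of $\langle \Z;+,\mid\rangle$ that does not preserve multiplication, and using the $\forall\exists$ formulas correctly keeps $a$ as the only parameter. The one soft spot is in your optional sketch of Robinson's argument: the sign fix for squaring is circular as written, since it appeals to ``$s$ is a square''. It is easily repaired by requiring that $s+x$ be an lcm of $x,x+1$ and $s-x$ an lcm of $x,x-1$ (each determined only up to sign by divisibility), which forces $s=x^2$ for every $x$.
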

\begin{proof}
We interpret $\Z$ in $G$ on the cyclic subgroup $\langle a\rangle=\{a^k \mid k \in \Z\}$. By Corollary \ref{co:1} the set $Z_a = \langle a\rangle$ is definable in $G$ with the parameter $a$. We interpret addition $\oplus$ on  $Z_a$ by the multiplication in $G$, since $a^k\cdot a^\ell = a^{k+\ell}$ the map $\mu_a: a^k \to k$ gives rise to an isomorphism 
$\mu_a: \langle Z_a;\oplus\rangle  \to \langle \Z;+\rangle$. 

Next, we interpret the integer division $\mid$ in $Z_a$ by a binary predicate $\mid^*$ which is defined either by the formula $Div_A(a,y_1,y_2)$ of Corollary \ref{co:2} or by the system of equations (\ref{eq:6}) of Corollary \ref{co:2}. The map $\mu_a$ gives an isomorphism of the structures 
$$
\mu_a: \langle Z_a;\oplus, \mid^* \rangle  \to \langle \Z;+,  \mid\rangle,
$$
so we interpreted $\langle \Z;+,  \mid\rangle$ in $G$. 

Finally, there is an argument due to Robinson (see \cite{Robinson}) that shows that the integer multiplication $\cdot$ is definable in $\langle \Z;+,  \mid\rangle$ by some formula $Mult(x_1,x_2,x_3)$.  Using formulas for $\oplus$ and $\mid^*$ one can rewrite the formula $Mult(x_1,x_2,x_3)$ into a formula $M_a(x_1,x_2,x_3,a)$ with parameter $a$ which defines a multiplication $\otimes$ on $Z_a$ in such a way that the map $\mu_a$ gives an isomorphism 
$$
\mu_a: \langle Z_a;\oplus, \otimes \rangle  \to \langle \Z; +,  \cdot \rangle.
$$
This gives an interpretation $\Z_a = \langle Z_a;\oplus, \otimes \rangle $ 
of $\Z$ in $G$ with the parameter $a$.
\end{proof}

\begin{remark}
We want to mention the following about the interpretations of $\Z$ in $G$:
  \begin{enumerate}
      \item For any $a\in A, a \neq 1$, the interpretation $\Z_a$ is \emph{injective}, that is, we use the standard equality $=$ from $G$ to define the equality in $\Z_a$. This might be useful for applications, in particular, in studying the elimination of imaginaries. 

      \item the structure $\langle \Z;+,  \mid\rangle$ is interpretable in $G$ by Diophantine formulas. 
  \end{enumerate}  
\end{remark}

Now we show that the isomorphisms $\Z_a \to \Z_d$ are uniformly definable in $G$ with parameters $a, d \in A$.

\begin{lemma} \label{le:6}
Let $a, d \in A, a, d \neq 1$ and  $k,l \in \Z$. Then $k = l$ if and only if for some nontrivial $u \in N$ (equivalently, every $u \in N$) the following holds:
$$[u,a, d^k]=[u,a^\ell,d] \  \ mod \ G_4,$$ 
where $G_4$ is the fourth term of the lower central series of $G$. 
 \end{lemma}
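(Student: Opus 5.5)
Throughout, read $k=l$ as $k=\ell$. The plan is to translate the congruence into the $\Z A$-module language of Lemma \ref{le:1}.

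\emph{Translation.} By Lemma \ref{le:1}, item 2), $G_4 = N^{\Delta^3}$. For $u\in N$ and $x,y\in A$, the commutators are
$$[u,x]=u^{x-1}, \qquad [u,x,y]=u^{(x-1)(y-1)}.$$
So the condition $[u,a,d^k]=[u,a^\ell,d] \bmod G_4$ says that $u^{P}\in N^{\Delta^3}$ with
$$P=(a-1)(d^k-1)-(a^\ell-1)(d-1).$$

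\emph{Expanding modulo $\Delta^3$.} Write $x=a-1$ and $y=d-1$, both in $\Delta$. Modulo $\Delta^2$ we have $d^k-1\equiv ky$ and $a^\ell-1\equiv \ell x$; this holds for all integers $k,\ell$ (negative exponents included, via $d^{-1}-1=-d^{-1}(d-1)$). Multiplying by an element of $\Delta$ shifts the congruence up one power, so
$$P\equiv (k-\ell)\,xy \pmod{\Delta^3}.$$
Hence the condition is equivalent to $u^{(k-\ell)(a-1)(d-1)}\in N^{\Delta^3}$. If $k=\ell$ this holds for every $u$, which gives one direction for both ``some $u$'' and ``every $u$''.

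\emph{Converse.} Suppose $k\ne\ell$ and fix $1\ne u\in N$. I need $u^{(a-1)(d-1)}\notin N^{\Delta^3}$; since $N^{\Delta^2}/N^{\Delta^3}$ is torsion-free by Lemma \ref{le:1}, item 5), this gives $u^{(k-\ell)(a-1)(d-1)}\notin N^{\Delta^3}$. The plan is as follows.

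First, $N$ is free on $b_1,\dots,b_n$, so $N^{\Delta^i}=\bigoplus_j b_j^{\Delta^i}$. This reduces the claim to showing $Q(a-1)(d-1)\notin\Delta^3$ for a nonzero Laurent polynomial $Q$ (a coordinate of $u$).

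Second, I use the graded ring $\mathrm{gr}(\Z A)=\bigoplus\Delta^i/\Delta^{i+1}$. By the argument in Lemma \ref{le:1}, item 5), this ring is the polynomial ring $\Z[y_1,\dots,y_m]$ with $y_i=a_i-1$, and in particular it is a domain. Write $a=\prod a_i^{\alpha_i}$; then $a-1\equiv \sum\alpha_i y_i \pmod{\Delta^2}$, which is a nonzero linear form since $a\ne1$. Likewise for $d$. Their product is therefore a nonzero element of $\Delta^2/\Delta^3$. It is not divisible by any integer $>1$ only if the forms are primitive, but torsion-freeness makes that irrelevant.

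Third, the obstacle: $Q$ may lie in $\Delta^s$ for some $s\ge1$. Then $Q(a-1)(d-1)\in\Delta^{s+2}\subseteq\Delta^3$, and the claim fails. Indeed, if $u\in N^{\Delta}$, say $u=[b_1,a]$, then $u^{(a-1)(d-1)}\in N^{\Delta^3}$, so the ``some $u$'' formulation is false for such $u$. The statement must therefore be read with $u$ chosen outside $N^\Delta=G_2$; for example $u=b_1$, in keeping with Lemmas \ref{le:4} and \ref{le:5}. For such $u$, some coordinate $Q_j$ has nonzero augmentation $\varepsilon(Q_j)$. Its image in $\Delta^2/\Delta^3$ is $\varepsilon(Q_j)\,(\sum\alpha_iy_i)(\sum\delta_iy_i)\ne0$, where $d=\prod a_i^{\delta_i}$, because $\mathrm{gr}$ is a domain and has no $\Z$-torsion.

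Conclusion: the equivalence holds for $u=b_1$, or any $u\notin G_2$. In the ``every $u$'' form, the converse follows by specializing to $u=b_1$. The main step to check carefully is the identification of $\mathrm{gr}(\Z A)$ with a polynomial ring, which is already implicit in Lemma \ref{le:1}, item 5).
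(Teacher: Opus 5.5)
Your proof is correct and has the same skeleton as the paper's: linearize the triple commutator modulo $G_4$ so that the congruence becomes $[u,a,d]^{k-\ell}\in G_4$, then use torsion-freeness. What you add is a step the paper's argument needs but does not supply.

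The paper only asserts $[u,a,d]\neq 1$ for $1\neq u\in N$. Torsion-freeness of $G/G_4$ turns $[u,a,d]^{k-\ell}\in G_4$ into $[u,a,d]\in G_4$, and this is a contradiction only if $[u,a,d]\notin G_4$. Your example $u=[b_1,a]$ shows that this fails for every nontrivial $u\in G_2=N^{\Delta}$: the congruence then holds for all $k,\ell$. So the ``for some nontrivial $u$'' clause of the lemma is false as written and must be restricted to $u\notin G_2$.

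Your computation in the degree-two component $\Delta^2/\Delta^3$ of the domain $\mathrm{gr}(\Z A)\cong\Z[y_1,\dots,y_m]$ supplies exactly the missing non-vanishing argument. You apply it to a coordinate $Q_j$ of $u$ with $\varepsilon(Q_j)\neq 0$. The ``every $u$'' version you establish is enough for Corollary~\ref{co:3}.

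On the step you flag: Lemma~\ref{le:1}(5) is argued only in $\Z[a_1,\dots,a_m]$. The identification for the Laurent ring nevertheless follows at once from $\Z A/\Delta^i\cong\Z[y_1,\dots,y_m]/(y_1,\dots,y_m)^i$. This isomorphism holds because each $a_j=1+y_j$ is already a unit modulo $(y_1,\dots,y_m)^i$.
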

\begin{proof}
    The ternary left-normed commutator $[x_1,x_2,x_3]$ defines a map 
    $N\times G \times G \to G_3/G_4$ which is linear in every variable.
    If $u \in N, u \neq 1$, then $[u,a,d] \neq 1$. Now $[u,a,d^k] = [u,a,d]^k \  \ mod \ G_4$ and     $[u,a^\ell,d] = [u,a,d]^\ell \  \ mod \ G_4$. Hence 
    $$
    [u,a, d^k]=[u,a^\ell,d] \  \ mod \ G_4,
    $$
   implies that 
$$
[u,a,d]^k  = [u,a,d]^\ell \  \ mod \ G_4,
$$
and therefore   $k = \ell$ since $G/G_4$ is torsion-free by Lemma \ref{le:1}. 
   
\end{proof}

\begin{cor} \label{co:3}
    There is a formula $Iso_\Z(x_1,x_2,y_1,y_2)$ in the group theory language such that for any $a,d \in A, a,d \neq 1,$ the formula $Iso_\Z(x_1,x_2,a,d)$ with parameters $a,d$ defines in $G$ the graph of the isomorphism $\Z_a \to \Z_d$.
\end{cor}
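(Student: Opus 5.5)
The plan is to turn Lemma \ref{le:6} directly into a first-order formula. This needs two ingredients: that $G_4$ is $0$-definable in $G$, and that membership in $\Z_a$ and $\Z_d$ is uniformly definable (by Corollary \ref{co:1}). We then define
$$
Iso_\Z(x_1,x_2,y_1,y_2) \;=\; Cyc_A(y_1,x_1)\wedge Cyc_A(y_2,x_2)\wedge \exists u\,\bigl(\phi_N(u)\wedge u\neq 1 \wedge [u,y_1,x_2]\,[u,x_1,y_2]^{-1}\in G_4\bigr),
$$
with ``$\in G_4$'' replaced by a defining formula for $G_4$. If $x_1=a^\ell$ and $x_2=d^k$, Lemma \ref{le:6} says the last conjunct holds if and only if $k=\ell$. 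Since $\mu_a(a^\ell)=\ell$ and $\mu_d(d^k)=k$, the formula then defines exactly the graph of $\mu_d^{-1}\circ\mu_a\colon \Z_a\to\Z_d$. The formula is uniform in the parameters, because $Cyc_A$ is, by Corollary \ref{co:1}, and $\phi_N$ is parameter-free, by Lemma \ref{le:3}.

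The main step is the definability of $G_4$, or at least of the relation ``$x\equiv y \bmod G_4$'' on the relevant elements. By Lemma \ref{le:1}, $G_4=N^{\Delta^3}$, which is generated as a normal subgroup by the commutators $[b_k,a_{j_1},a_{j_2},a_{j_3}]$. I would first try to show that $G_4$ is a verbal subgroup of finite width in the following sense: every element of $G_4$ is a product of a bounded number of commutators $[v,g_1,g_2,g_3]$ with $v\in N$. The argument works in the module language, since $[v,g_1,g_2,g_3]=v^{(g_1-1)(g_2-1)(g_3-1)}$ for $v\in N$ and $g_i\in A$. The ideal $\Delta^3$ is generated by the finitely many products $(a_{j_1}-1)(a_{j_2}-1)(a_{j_3}-1)$. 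Since $N$ is a module, any element of $N^{\Delta^3}$ can be written as $\prod_{j_1\le j_2\le j_3} v_{j}^{(a_{j_1}-1)(a_{j_2}-1)(a_{j_3}-1)}$ with $v_j\in N$, that is, as a product of at most $\binom{m+2}{3}$ such commutators. Conjugating by elements of $G$ only moves $v$ within $N$, so every such product lies in $G_4$.

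This gives $G_4$ as the set of products of $\binom{m+2}{3}$ elements of the form $[v,g_1,g_2,g_3]$ with $\phi_N(v)$, and that set is first-order definable. Allowing arbitrary $g_i\in G$ causes no trouble: for $v\in N$ and $g=cw$ with $c\in A$ and $w\in N$, we have $[v,g]=[v,c]$, and the result stays inside $G_4$. If one prefers to avoid this counting, it is enough to observe that $G_4$ is the subgroup generated by these basic commutators and that the width bound above applies to it, so the bounded product formula already defines all of $G_4$.

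The final verification is that, for $x_1\in\Z_a$ and $x_2\in\Z_d$, the existential over $u$ agrees with the ``some $u$'' form of Lemma \ref{le:6}. This is immediate from the lemma, which also allows the universal version.
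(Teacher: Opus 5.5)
Your reduction to Lemma \ref{le:6} is the intended route; the paper gives no separate argument for Corollary \ref{co:3}. Your finite-width definition of $G_4=N^{\Delta^3}$, as the set of products of $\binom{m+2}{3}$ commutators $[v,g_1,g_2,g_3]$ with $\phi_N(v)$, is correct and fills in a step the paper leaves implicit.

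The gap is in the last conjunct of your formula. The clause $\exists u\,\bigl(\phi_N(u)\wedge u\neq 1\wedge [u,y_1,x_2][u,x_1,y_2]^{-1}\in G_4\bigr)$ is true for \emph{every} pair $(a^\ell,d^k)$. To see this, take any nontrivial $u\in [G,G]=N^{\Delta}$, for instance $u=[b_1,a_1]$. Then $[u,a]$ and $[u,a^\ell]$ lie in $G_3$, so $[u,a,d^k]$ and $[u,a^\ell,d]$ both lie in $G_4$ for all $k,\ell\in\Z$, and the congruence holds trivially. Hence your $Iso_\Z(x_1,x_2,a,d)$ defines all of $\Z_a\times\Z_d$, not the graph of $a^\ell\mapsto d^\ell$. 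The ``some nontrivial $u$'' clause of Lemma \ref{le:6} cannot be used as stated. The argument needs $[u,a,d]\notin G_4$, not merely $[u,a,d]\neq 1$, and this fails exactly when $u\in G_2$.

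The fix is to use the universal form $\forall u\,\bigl(\phi_N(u)\rightarrow [u,y_1,x_2][u,x_1,y_2]^{-1}\in G_4\bigr)$.
\begin{itemize}
\item If $k=\ell$, then $(a-1)(d^k-1)\equiv k(a-1)(d-1)\equiv (a^k-1)(d-1)$ modulo $\Delta^3$, so the congruence holds for every $u\in N$.
\item Conversely, specialize to $u=b_1$. This is used only in the verification, so no parameter enters the formula. It gives $[b_1,a,d]^{k-\ell}\in G_4$.
\item Now $a-1$ and $d-1$ have nonzero images in $\Delta/\Delta^2\cong A$, and the $\Delta$-adic graded ring of $\Z A$ is a polynomial ring over $\Z$. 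So $(a-1)(d-1)\notin\Delta^3$, that is, $[b_1,a,d]\in G_3\setminus G_4$.
\item Torsion-freeness of $G_3/G_4$ (Lemma \ref{le:1}) then yields $k=\ell$.
\end{itemize}

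Alternatively, you can keep the existential quantifier but replace $u\neq 1$ by $u\notin[G,G]$. This set is $0$-definable by the same width argument, since $N^\Delta$ consists exactly of the products $[v_1,g_1]\cdots[v_m,g_m]$ with $v_j\in N$ and $g_j\in G$. For such $u$, some coordinate $P_i$ of $u$ in the basis $b_1,\ldots,b_n$ has nonzero augmentation. The same graded-ring argument then gives $[u,a,d]\notin G_4$.
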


\begin{remark}
    Corollary \ref{co:3} allows one to fix an arbitrary $a\in A, a\neq 1$, make some computations with integers in $\Z_a$ and then transfer the result through $Iso_\Z(x_1,x_2,a,d)$ into $\Z_d$. For example, the set $$
    \{(a^k,d^{2^k}) \mid k \in \N\}
    $$
  is uniformly definable in $G$ with parameters $a,d \in A, a,d \neq 1$.

\end{remark}

\begin{cor} \label{co:4}
There is a formula $Prod_{A,m}(x_1,x_2,\bar y, \bar z)$ of the group language, here $\bar y, \bar z$  are tuples of variables of length $m$,  such that for any $a \neq 1, d_1, \ldots,d_m \in A$, $a^{\gamma_1}, \ldots, a^{\gamma_m } \in \Z_a$, and $g \in G$  the following holds: 
$$
G \models Prod_A(g,a, \bar d,  {\bar a}^{\bar \gamma}) \Longleftrightarrow g = d_1^{\gamma_1} \ldots d_m^{\gamma_m}
$$
where $\bar d = (d_1, \ldots,d_m), {\bar a}^{\bar \gamma}  = (a^{\gamma_1}, \ldots, a^{\gamma_m } )$. 

In particular, the formula $Exp_A(x_1,x_2,y_1,z_1) = Prod_{A,2}(x_1,x_2,\bar y, \bar z)$  defines in $G$ the $\Z$-exponentiation on $A$, that is for any $a, d \neq 1 \in A$ and any $\gamma \in \Z$, for any $g \in G$ the following holds: 
$$
G \models Prod_A(g,a, d,  a^\gamma) \Longleftrightarrow g = d^\gamma.
$$
    
\end{cor}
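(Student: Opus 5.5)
The plan is to build $Prod_{A,m}$ from two ingredients already available: the exponentiation formula $Exp_A$, and multiplication in $G$. Since $A$ is abelian, $d_1^{\gamma_1}\cdots d_m^{\gamma_m}$ is an iterated product of the powers $d_i^{\gamma_i}$. So the first and main step is to define, uniformly in parameters $a\neq 1$ and $d\in A$ and a variable $x\in\Z_a$, the relation $g=d^{\gamma}$ where $x=a^\gamma$. Once this is in hand, I set
\[
Prod_{A,m}(g,a,\bar d,\bar z)\;:=\;\exists w_1\ldots\exists w_m\Big(\bigwedge_{i=1}^m Exp_A(w_i,a,d_i,z_i)\wedge g=w_1\cdots w_m\Big).
\]
The case $d_i=1$ is handled by the disjunct $d_i=1\wedge w_i=1$.

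For the exponentiation, I take $d\neq 1$ in $A$ and use Corollary \ref{co:3}. The formula $Iso_\Z(x,y,a,d)$ defines the graph of the isomorphism $\mu_d^{-1}\circ\mu_a:\Z_a\to\Z_d$, which sends $a^\gamma\mapsto d^\gamma$. Here $\Z_a=\langle a\rangle$ and $\Z_d=\langle d\rangle$ are definable by $Cyc_A$ from Corollary \ref{co:1}. Hence
\[
Exp_A(g,a,d,x)\;:=\;Cyc_A(a,x)\wedge Iso_\Z(x,g,a,d)
\]
holds exactly when $x=a^\gamma$ and $g=d^\gamma$. No hypothesis that $d$ is primitive is needed. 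Lemma \ref{le:6} compares $[u,a,d^k]$ and $[u,a^\ell,d]$ modulo $G_4$, and this comparison works for any nontrivial $a,d$, because $[u,a,d]\neq 1$ in the torsion-free group $G_3/G_4$.

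The main obstacle is making $Iso_\Z$ genuinely first-order, since Lemma \ref{le:6} speaks about congruence modulo $G_4$. I plan to use that $G_4=N^{\Delta^3}$ is definable, so that the condition ``$y\in G_4$'' can be expressed. The easiest route is to avoid describing $G_4$ at all. For $u=b_1$ and $x=a^k$, $y=d^\ell$ in $A$, the elements $[b_1,a,y]$ and $[b_1,x,d]$ equal $b_1^{(a-1)(d^\ell-1)}$ and $b_1^{(a^k-1)(d-1)}$. Equality modulo $G_4$ of these elements becomes, on the $\Z A$-side, equality of their images in $\Delta^2/\Delta^3$.

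This can be expressed by an existential quantifier over $z\in N$ asserting that the difference lies in the submodule generated by triple commutators $[v,c_1,c_2,c_3]$. To keep this finite, I note that $G_4$ is generated by $[b_1,a_{j_1},a_{j_2},a_{j_3}]^{P}$, i.e. by elements $[w,a_{j_1},a_{j_2},a_{j_3}]$ with $w\in N$. Thus $G_4\cap N^{\langle b_1\rangle}$ is a finite product of such elements over the top basis. A top basis can be quantified over using $Base_{top}$ from Lemma \ref{le:basis}, which keeps the formula parameter-free apart from $a,d$.

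Finally, I check that the resulting formula does not depend on the chosen top basis, by invoking the automorphic equivalence of bases. I also check the ``in particular'' case $m=1$: with $\bar d=(d)$ and $\bar z=(a^\gamma)$, the formula $Prod_{A,1}$ reduces to $Exp_A$, giving $g=d^\gamma$.
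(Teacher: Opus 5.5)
Your proposal is correct and follows essentially the paper's own (implicit) argument: you transport each $a^{\gamma_i}\in\Z_a$ to $d_i^{\gamma_i}\in\Z_{d_i}$ via $Iso_\Z$ from Corollary \ref{co:3} and multiply the results, and the case $m=1$ gives $Exp_A$ (the ``$Prod_{A,2}$'' in the statement should read $Prod_{A,1}$, as you implicitly assume). Your detour re-deriving $Iso_\Z$ is unnecessary, since Corollary \ref{co:3} is already available; if you do unpack it, quantify $u$ universally over $N$ rather than fixing $u=b_1$, because $b_1$ would be an extra parameter and the ``some nontrivial $u$'' version of Lemma \ref{le:6} fails for $u\in G_2$.
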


\begin{cor} \label{co:5}
There is a formula $Base_A(x_1,x_2,\bar y, \bar z)$ of the group language such that for any $d_1, \ldots,d_m \in A$ 
$$
G \models Base_A(d_1, \ldots,d_m) \Longleftrightarrow  (d_1, \ldots,d_m)  \text{ is a basis of}  A 
$$

\end{cor}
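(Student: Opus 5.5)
We would prove Corollary \ref{co:5} by translating the characterization of a basis of $A$ inside $A$ itself into group language, using the definability of $A$ from Lemma \ref{le:3} and the uniform exponentiation from Corollary \ref{co:4}. The statement lists extra variables, but we only need $Base_A(x_1,\ldots,x_m)$. Since $d_1,\ldots,d_m$ range over $A$, and $A$ is defined by $[x,a]=1$ for any nontrivial $a\in A$, we may either take this as a hypothesis on the $d_i$, or quantify over a nontrivial $a$ with $[a,d_i]=1$ and $a\notin N$ (using Lemma \ref{le:3}, item 1, to define $N$ without parameters). Then $A=C_G(a)$, and we can refer to $A$ and to $\Z_a=\langle a\rangle$ definably.

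The defining property we would use is that $(d_1,\ldots,d_m)$ is a basis of $A\simeq\Z^m$ if and only if two conditions hold. First, every $g\in A$ has the form $d_1^{\gamma_1}\cdots d_m^{\gamma_m}$. Second, $d_1^{\gamma_1}\cdots d_m^{\gamma_m}=1$ only when all $\gamma_i=0$. Each condition is first-order once we quantify over exponents as elements of $\Z_a$, that is, over $y_1,\ldots,y_m$ satisfying $Cyc_A(a,y_i)$ from Corollary \ref{co:1}. The formula $Prod_{A,m}(g,a,\bar d,\bar y)$ of Corollary \ref{co:4} then expresses $g=d_1^{\gamma_1}\cdots d_m^{\gamma_m}$ when $\bar y=\bar a^{\bar\gamma}$.

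Concretely, we would write
$$Base_A(\bar x)\;:=\;\exists a\,\bigl(a\notin N\wedge \textstyle\bigwedge_i [a,x_i]=1\wedge \forall g([g,a]=1\to \exists \bar y\,(\bigwedge_i Cyc_A(a,y_i)\wedge Prod_{A,m}(g,a,\bar x,\bar y)))$$
$$\wedge\;\forall\bar y\,(\textstyle\bigwedge_i Cyc_A(a,y_i)\wedge Prod_{A,m}(1,a,\bar x,\bar y)\to \bigwedge_i y_i=1)\bigr).$$
The key checks are as follows. The choice of $a$ is irrelevant: for any admissible $a$, $C_G(a)=A$ by Lemma \ref{le:2}, and $Prod_{A,m}$ is correct uniformly in $a$. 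The conditions then say exactly that $\bar d$ generates $A$ and is independent, which for $\Z^m$ means a basis. We could also quantify universally over $a$; either version is equivalent.

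The main obstacle is that Corollary \ref{co:4} as stated requires $d_i\neq1$ (it comes from Corollary \ref{co:3} via $Iso_\Z$). So we would first add the conjunct $\bigwedge_i x_i\neq 1$, which costs nothing since basis elements are nontrivial. We would also make sure that $Prod_{A,m}$ with nontrivial $d_i$ can be evaluated. This needs exponentiation $d^\gamma$ for each $d\neq1$, obtained from $Iso_\Z$ transferring $a^\gamma$ to $d^\gamma\in\Z_d$, followed by the group product. Beyond this, the argument is routine.
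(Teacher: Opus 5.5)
Your proposal is correct and follows the route the paper intends. The paper states Corollary \ref{co:5} without proof, as an immediate consequence of Corollary \ref{co:4}: use $Cyc_A$ and $Prod_{A,m}$ to express that every element of $A$ is uniquely a product $d_1^{\gamma_1}\cdots d_m^{\gamma_m}$. The one step you leave implicit is that $[a,d_i]=1$ with $1\neq d_i\in A$ forces $a\in A$ by Lemma \ref{le:2}; so your added conjunct $x_i\neq 1$ is also what guarantees $C_G(a)=A$ for every admissible $a$.
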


\section{Definability of $\Z$- and $\Z A$-exponentiations in $G$}\label{se:4}

\subsection{Definability of $\Z$-exponentiations in $G$}

We first prove some auxiliary results.

\begin{lemma} \label{le:7}

  If $a\in A, a\neq 1, u\in N, u\neq 1,$  and $k\in\mathbb Z$. Then the following holds:
\begin{enumerate}
    \item [1)]  $(au)^k  = a^kw$, where $w =  u^{a^{k-1} + a^{k-2} + \ldots +1}$
    \item [2)] this $w = u^{a^{k-1} + a^{k-2} + \ldots +1} $ is the unique solution of the equation $[u,a^k]=[w,a]$.
\end{enumerate}
 \end{lemma}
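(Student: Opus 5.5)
Both parts are computations in the $\Z A$-module $N$, using the multiplication rule (\ref{eq:mult-G}) together with freeness of $N$ and the fact that $\Z A$ is a domain.

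\textbf{Part 1).} I will induct on $k$, first for $k\ge 0$; the case $k=0$ is trivial with empty sum $w=1$. Assume $(au)^k=a^kw_k$ with $w_k=u^{a^{k-1}+\dots+1}$. Then (\ref{eq:mult-G}) gives
$$(au)^{k+1}=a^kw_k\cdot au=a^{k+1}w_k^{a}u .$$
Hence $w_{k+1}=u^{(a^{k-1}+\dots+1)a+1}=u^{a^k+\dots+1}$. For negative $k$ I will read the exponent $a^{k-1}+\dots+1$ as the element $(a^k-1)/(a-1)$ of $\Z A$. This is a genuine Laurent polynomial, namely $-(a^{-1}+\dots+a^{k})$ for $k<0$. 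The identity then follows by inverting: write $(au)^{-1}=a^{-1}u^{-a^{-1}}$ and run the same induction, or check directly that $a^kw\cdot a^{-k}w'=1$ for the proposed $w'$.

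\textbf{Part 2).} For existence, since $w$ and $u$ lie in the abelian group $N$, we have $[u,a^k]=u^{a^k-1}$ and $[w,a]=w^{a-1}$. With $w=u^{(a^k-1)/(a-1)}$ we get $w^{a-1}=u^{a^k-1}$, exactly as in the proof of Lemma~\ref{le:4}. Implicit here is that the equation is solved with $w\in N$; I will state this restriction explicitly. Without it uniqueness fails, since $w$ could be multiplied by any element of $C_G(a)=A$.

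For uniqueness, suppose $w,w'\in N$ both satisfy $[u,a^k]=[w,a]$. Then $(w'w^{-1})^{a-1}=1$ in $N$. Now $N$ is a free $\Z A$-module by item 4) of Section~\ref{se:2.3}, and $a-1\neq 0$ in the integral domain $\Z A$ because $a\ne1$. Hence multiplication by $a-1$ is injective on $N$, so $w'=w$.

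The only point needing care is the negative-$k$ interpretation of the geometric sum in Part 1); the rest is routine.
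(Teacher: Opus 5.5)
Your proof is correct and follows the same route as the paper: induction using the multiplication rule (\ref{eq:mult-G}) for 1), and rewriting $[u,a^k]=[w,a]$ as $u^{a^k-1}=w^{a-1}$ in the $\Z A$-module $N$ for 2). You also fill in points the paper leaves implicit. These are the case $k<0$ via $(a^k-1)/(a-1)\in\Z A$, and the restriction $w\in N$; in $G$ the full solution set is the coset $Aw$, obtained by left multiplication by $C_G(a)=A$. Finally, you justify the paper's ``dividing by $a-1$'' by noting that multiplication by $a-1\neq 0$ is injective on the free module $N$ over the domain $\Z A$.
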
 
\begin{proof}
   Let $k > 0$.  Then by induction 
   $$
 (au)^k = (au)^{k-1}au  = a^{k-1}u^{a^{k-2} + \ldots +1 } au = a^ku^{a^{k-1} + \ldots +1}, 
 $$  
 which proves 1). 
  To see 2) rewrite the equality $[u,a^k]=[w,a]$ in the form $u^{a^k -1} = w^{a-1}$. Dividing $a^k-1$ by $a-1$ one gets a unique solution $w = u^{a^{k-1} + a^{k-2} + \ldots +1}$.

\end{proof}

\begin{cor} \label{co:6}
  There is a formula $Exp_{mix}(x_1,x_2,x_3,y)$ of  group theory language such that  for any $a\neq 1 \in A$, any $u \neq 1 \in N$, and any $k \in \Z$ for any $g \in G$ the following holds:
$$
G \models Exp_{mix}(g,u,a^k,a) \Longleftrightarrow g = (au)^k.
$$
  
\end{cor}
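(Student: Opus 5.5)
We read off the defining formula directly from Lemma \ref{le:7}, using the definability facts already established. Specifically, we take
$$
Exp_{mix}(x_1,x_2,x_3,y) \;=\; \exists w\,\bigl(\phi_N(w)\wedge [x_2,x_3]=[w,y]\wedge x_1 = x_3w\bigr),
$$
where $\phi_N$ is the parameter-free formula of Lemma \ref{le:3} defining $N$.

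The claim to verify is this: for $a\neq 1$ in $A$, $u\neq 1$ in $N$ and $k\in\Z$, the formula $Exp_{mix}(g,u,a^k,a)$ holds in $G$ if and only if $g=(au)^k$.

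For the forward direction, suppose $w\in N$ satisfies $[u,a^k]=[w,a]$. By Lemma \ref{le:7}(2), $w$ is the unique solution, namely $w=u^{a^{k-1}+\dots+1}$. Then Lemma \ref{le:7}(1) gives $g=a^kw=(au)^k$. The converse holds by the same two items of the lemma.

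Lemma \ref{le:7} is stated for $k>0$, so the main point to check is that it extends to all $k\in\Z$.

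\textbf{Case $k=0$.} Here $[u,1]=1=[w,a]$ forces $w^{a-1}=1$. Since $N$ is a free module over the domain $\Z A$ and $a-1\neq 0$, this gives $w=1$. Hence $g=1=(au)^0$, as required.

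\textbf{Case $k<0$.} Write $k=-l$ with $l>0$. Then
$$
(au)^{-l}=\bigl(a^lu^{(a^l-1)/(a-1)}\bigr)^{-1}=a^{-l}\,u^{-a^{-l}\cdot(a^l-1)/(a-1)}.
$$
The exponent of $u$ here is exactly the element $(a^{k}-1)/(a-1)$ of the Laurent ring $\Z A$. So in every case $(au)^k=a^kw$ with $w=u^{(a^k-1)/(a-1)}$.

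Uniqueness of the solution to $u^{a^k-1}=w^{a-1}$ follows in all cases because $N$ is a torsion-free $\Z A$-module and $a-1$ is a nonzero element of the domain $\Z A$, so multiplication by $a-1$ is injective on $N$. Existence follows because $a-1$ divides $a^k-1$ in $\Z A$ for every integer $k$.

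Finally, the formula uses no parameters beyond its free variables, since $\phi_N$ is $0$-definable. Therefore $Exp_{mix}$ works uniformly in $a$, $u$ and $k$, which completes the argument.
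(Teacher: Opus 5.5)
Your proposal is correct and follows the same route as the paper, whose proof simply derives the corollary from Lemma~\ref{le:7} together with the $0$-definability of $N$ (Lemma~\ref{le:3}). Your formula $\exists w\,(\phi_N(w)\wedge [x_2,x_3]=[w,y]\wedge x_1=x_3w)$ is exactly what that citation encodes, and your treatment of $k\le 0$ fills in cases that the paper's proof of Lemma~\ref{le:7} only checks for $k>0$; the conjunct $\phi_N(w)$ is genuinely needed, since $[dv,a]=[v,a]$ for every $d\in A$.
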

\begin{proof}
It follows from Lemma \ref{le:7} and the results of the previous section.
    
\end{proof}
 
\begin{lemma} Let $u \in N$, $k\in\mathbb Z$.
 Then for every $a\in A$  there exists $v\in N$ such that \begin{equation} \label{eq:lemma-9-1}
  (au)^k=a^ku^k[v,a].   
 \end{equation}
 Moreover, $w= u^k$ is the only element in $N$ such that for every $a \in A$ the equation  
 \begin{equation} \label{eq:lemma-9-2}
 (au)^k=a^kw[v,a]
 \end{equation}
 has a solution $v \in N$.

 \end{lemma}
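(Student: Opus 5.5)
Throughout I work in the $\Z A$-module notation, so $[v,a]=v^{a-1}$ for $v\in N$, $a\in A$. The plan has two parts.

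\textbf{Existence of $v$.} By Lemma \ref{le:7} (and the analogous computation for $k<0$), $(au)^k=a^k u^{S_k(a)}$, where
$$S_k(a)=a^{k-1}+\dots+a+1 \quad\text{for } k>0,$$
$S_0(a)=0$, and
$$S_k(a)=-(a^{-1}+\dots+a^{k}) \quad\text{for } k<0.$$
Substituting $a=1$ gives $S_k(1)=k$ in every case. Hence $S_k(a)-k$ lies in the kernel of the augmentation map restricted to $\Z[a^{\pm1}]$, which is the ideal $(a-1)\Z[a^{\pm1}]$. Explicitly, for $k>0$,
$$S_k(a)-k=\sum_{i=1}^{k-1}(a^i-1)=(a-1)T(a)$$
for some Laurent polynomial $T(a)$ in $a$; the case $k<0$ is handled the same way. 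Setting $v=u^{T(a)}$, which is an element of $N$, gives
$$(au)^k=a^k u^{k}u^{(a-1)T(a)}=a^k u^k[v,a],$$
using that $N$ is abelian. This covers $a=1$ trivially, since then $(u)^k=u^k$ and we may take $v=1$.

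\textbf{Uniqueness of $w$.} Suppose $w\in N$ is such that for every $a\in A$ the equation $(au)^k=a^kw[v,a]$ has a solution $v\in N$. Comparing with the existence part for the same $a$, we get $w^{-1}u^k\in [N,a]=N^{a-1}$ for every $a\in A$. I will apply this to the basis elements $a=a_1,\dots,a_m$. Writing $w^{-1}u^k=\prod_j b_j^{R_j}$ with $R_j\in\Z A$, freeness of $N$ over $\Z A$ (property 4) gives $R_j\in (a_i-1)\Z A$ for every $i$ and $j$. Therefore each $R_j$ lies in
$$\bigcap_i (a_i-1)\Z A.$$
Since $\Z A$ is a UFD and the elements $a_i-1$ are pairwise non-associate primes, this intersection equals $(a_1-1)\cdots(a_m-1)\Z A$.

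\textbf{The main obstacle.} At this point the argument does not finish: the product ideal is nonzero, so it does not force $R_j=0$. One needs more than the basis elements of $A$, for instance $a=a_1^2$, or all $a\in A$. Applying the condition to all $a\in A$, each $R_j$ must lie in $\bigcap_{a\ne1}(a-1)\Z A$. Let $R\neq 0$ be such an element. Then $R$ has only finitely many irreducible factors, but $a-1$ ranges over infinitely many essentially different polynomials. For example, the elements $a_1^p-1$ for distinct primes $p$ have pairwise non-associate cyclotomic factors $\Phi_p(a_1)$. Thus $R$ would be divisible by infinitely many pairwise non-associate primes, which is impossible. Hence $R_j=0$ for all $j$, so $w=u^k$.

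I expect this last step to be the main obstacle: the uniqueness genuinely requires quantifying over infinitely many $a\in A$ (for example all powers of $a_1$), not just over a basis. It also uses the UFD structure of the Laurent polynomial ring, in the form that a nonzero element has only finitely many prime divisors up to associates.
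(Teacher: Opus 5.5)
Your proof is correct. For uniqueness you follow the paper's argument. You derive $w^{-1}u^k=[v_1v^{-1},a]\in N^{a-1}$ for every $a$, use freeness of $N$ to get divisibility of each coordinate by $a-1$, and then specialize to powers of $a_1$. The paper only asserts that a Laurent polynomial divisible by every $a_1^\ell-1$ must vanish. Your argument via the pairwise non-associate primes $\Phi_p(a_1)$ in the UFD $\Z A$ supplies that missing justification. (A count of the $a_1$-span would also work: $a_1^\ell-1$ has span $\ell$, spans add under multiplication over a domain, and $R\neq 0$ has bounded span.) Your observation that the basis elements alone cannot suffice, since $\bigcap_i(a_i-1)\Z A=(a_1-1)\cdots(a_m-1)\Z A\neq 0$, is correct. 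It explains why the quantifier ranges over all of $A$, though it is a detour rather than a real obstacle.

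The existence part is where your route differs. The paper argues by induction on $k$: it first proves $u^ka=au^k[v,a]$ and then pushes commutators past $a$ at the level of group identities. You instead read off $(au)^k=a^ku^{S_k(a)}$ from Lemma \ref{le:7}. You then note that $S_k(a)-k$ lies in the augmentation ideal $(a-1)\Z[a^{\pm1}]$, so $v=u^{T(a)}$ is explicit. Your version is shorter and gives $v$ in closed form. It also handles $k<0$ explicitly, which the paper's upward induction from $k=1$ does not address. The paper's version avoids module bookkeeping but yields $v$ only recursively.
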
 
\begin{proof}
    We prove first, by induction on $k$,  that 
\begin{equation} \label{eq:lemma-9-3}
u^ka = au^k[v,a]    
\end{equation}
for  some $v \in N$. For $k = 1$ one has  $ua = au[u,a]$. By induction  
$$
u^{k+1}a = u^kua = u^kau[u,a] = au^k[v_1,a]u[u,a] = au^{k+1}[v_1,a][u,a]=au^{k+1}[v_1u,a],
$$
and (\ref{eq:lemma-9-3}) follows.

Now we prove (\ref{eq:lemma-9-1}) by induction on $k$ as follows (here an element $v_2 \in N$ appears as the result of  the induction step and an element $v_3$ as an application of the (\ref{eq:lemma-9-3})):
$$
(au)^{k+1} = (au)^kau  = a^ku^k[v_2,a]au = a^ku^ka[v_2,a][[v_2,a],a]u = 
$$
$$
= a^kau^k[v_3,a][v_2,a][[v_2,a],a]u = a^{k+1}u^{k+1}[v_3v_2[v_2,a],a] =  a^{k+1}u^{k+1}[v,a], 
$$
where $v = v_3v_2[v_2,a],a]$.

For the uniqueness, let $u\in N, w\in G$. If for any $a\in A$, there exists $v_1\in N$ such that 
$$(au)^k=a^kw[v_1,a].$$
Then by (\ref{eq:lemma-9-1}) there exists $v\in N$ such that 
$$a^ku^k[v,a]=a^kw[v_1,a].$$
Hence
$$w^{-1}u^k=[v_1,a][v^{-1},a]=[v_1v^{-1},a].$$
Since $N$ is a free $\Z A$-module, one gets $w^{-1}u^k=1.$ Indeed, let $w^{-1}u^k = \Pi_i b_i^{P_i}$ and $v_1v^{-1} = \Pi_i b_i^{Q_i}$ for some $P_i, Q_i \in \Z A$. Then  for every $i$ $P_i = Q_i(a-1)$. It follows that $a-1$ divides $Q_i$ for every $a \in A$. In particular, every polynomial $a_1^\ell -1, \ell > 0$ divides every $P_i$. Hence, $P_i = 0$ for all $i$, so $w^{-1}u^k = 1$, as claimed.
\end{proof} 

\begin{cor} \label{co:7}
There is a formula $Exp_N(x_1,x_2, x_3, y)$ of  group theory language such that  for any $a\neq 1 \in A$, any $u \neq 1 \in N$, and any $k \in \Z$, for any $g \in G$ the following holds:
$$
G \models Exp_N(g,u,a^k,a) \Longleftrightarrow g = u^k.
$$
  
\end{cor}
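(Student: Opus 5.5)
The plan is to turn the uniqueness part of the preceding lemma directly into a first-order formula, using Corollary \ref{co:6} to express $(au)^k$ and Lemma \ref{le:3} to define $N$ and $A$. The difficulty is that the lemma characterizes $u^k$ by quantifying over \emph{all} $c\in A$ (every $c\in A$ must admit $v\in N$ with $(cu)^k=c^kw[v,c]$). The integer $k$, however, is given only through the parameter pair $(a,a^k)$. So for each $c$ I must first transfer $k$ from $\Z_a$ to $\Z_c$. Corollary \ref{co:3} does exactly this: $Iso_\Z(a^k,y,a,c)$ defines $y=c^k$, uniformly in $c\neq 1$.

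Concretely, I would take $Exp_N(x_1,x_2,x_3,y)$ to say that $x_1\in N$ (via $\phi_N$) and that for every $c$ with $[c,y]=1$ and $c\neq 1$ (so $c\in A\smallsetminus\{1\}$ by Lemma \ref{le:2}, with $y=a$), and every $e$ with $Iso_\Z(x_3,e,y,c)$ (so $e=c^k$), there is $h$ and $v$ such that the following hold:
\begin{itemize}
\item $Exp_{mix}(h,x_2,e,c)$, so that $h=(cu)^k$;
\item $\phi_N(v)$;
\item $h=e\,x_1\,[v,c]$.
\end{itemize}
The case $c=1$ is trivial and can be excluded, since Corollaries \ref{co:3} and \ref{co:6} require nontriviality. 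Throughout, $x_2=u\neq 1$ is assumed, as in the statement.

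For correctness, if $g=u^k$ then equation (\ref{eq:lemma-9-1}) of the lemma gives the required $v$ for every $c\in A$. Conversely, suppose $g\in N$ satisfies the formula. Then for every nontrivial $c\in A$ there is $v\in N$ with $(cu)^k=c^kg[v,c]$. The uniqueness argument in the lemma uses only the elements $c=a_1^\ell$ with $\ell>0$ (divisibility by $a_1^\ell-1$ for all $\ell$ forces $P_i=0$), all of which are nontrivial. Hence $g=u^k$.

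The main point to check is that the uniqueness argument really only needs nontrivial $c$, which it does. A second point is that the parameters supplied to $Exp_{mix}$ and $Iso_\Z$ have the required form, $c\in A$ nontrivial and $e\in\langle c\rangle$. This is guaranteed because $A$ is defined as $C_G(a)$ and $e$ is obtained through $Iso_\Z$. Everything else reduces to earlier corollaries.
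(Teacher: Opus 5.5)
Your proposal is correct and follows essentially the route the paper intends: the paper gives no separate proof of Corollary \ref{co:7}, which is read off from the uniqueness clause of the preceding lemma, with $(cu)^k$ expressed via $Exp_{mix}$ (Corollary \ref{co:6}) and $c^k$ obtained from $a^k$ by transferring the integer from $\Z_a$ to $\Z_c$ (via $Iso_\Z$, or equivalently $Exp_A$ from Corollary \ref{co:4}). You are also right that the uniqueness argument only needs the nontrivial elements $c=a_1^\ell$, $\ell>0$, so restricting the quantifier to $C_G(a)\smallsetminus\{1\}$ costs nothing.
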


 \begin{theorem} \label{th:2}
     There is a formula $Exp_G(x_1,x_2,x_3,y)$ of the group theory language such that  for any $a \neq 1 \in A,  g,h\in G, k\in\mathbb Z$
     $$
     G \models Exp_G(g,h,a^k, a) \Longleftrightarrow g^k=h.
     $$
 \end{theorem}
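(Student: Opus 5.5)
The formula $Exp_G$ will be assembled by cases according to where $g$ lies, reducing each case to one of the exponentiation formulas already available: $Exp_A$ (Corollary \ref{co:4}), $Exp_{mix}$ (Corollary \ref{co:6}) and $Exp_N$ (Corollary \ref{co:7}). Every $g\in G$ decomposes uniquely as $g=du$ with $d\in A$ and $u\in N$. The first complication is that $A$ is only definable with a parameter (Lemma \ref{le:3}). Since the parameter $a\neq 1$ lies in $A$, the formula $[x,a]=1$ defines exactly $A$, by Lemma \ref{le:2}. So the formula may quantify over $d$ with $[d,a]=1$, and over $u$ with $\phi_N(u)$, and then require $g=du$.

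There are four cases.
\begin{itemize}
\item If $g\in N$, that is $d=1$, then $g^k=h$ is expressed by $Exp_N(h,g,a^k,a)$ when $g\neq 1$, and by $h=1$ when $g=1$.
\item If $g\in A\smallsetminus\{1\}$, that is $u=1$, then $g^k=h$ is expressed by $Exp_A(h,a,g,a^k)$ from Corollary \ref{co:4}.
\item If $d\neq 1$ and $u\neq 1$, we use Corollary \ref{co:6} with $d$ in the role of $a$. That corollary, however, produces $(du)^k$ from the power $d^k$ living in $\Z_d$, not from $a^k\in\Z_a$.
\item The transfer from $\Z_a$ to $\Z_d$ is done by Corollary \ref{co:3}. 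The formula $Iso_\Z(a^k,y,a,d)$ singles out the unique $y=d^k$, which is uniformly definable in the parameters $a,d$. Then $Exp_{mix}(h,u,y,d)$ holds iff $h=(du)^k$.
\end{itemize}
Alternatively, in the last case $d^k$ can be obtained directly as $Exp_A(y,a,d,a^k)$.

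The formula is then
\[
Exp_G(x_1,x_2,x_3,y)=\exists d\,\exists u\,\exists t\,\bigl([d,y]=1\wedge\phi_N(u)\wedge x_1=du\wedge \Theta(d,u,t,x_2,x_3,y)\bigr),
\]
where $\Theta$ is the disjunction over the cases $d=1$ or $d\neq1$, and $u=1$ or $u\neq 1$, of the clauses above. In the clause $d\neq1,u\neq1$, $\Theta$ requires $Exp_A(t,y,d,x_3)$ and $Exp_{mix}(x_2,u,t,d)$. The case $g=1$ gives $h=1$. Since $d$ and $u$ are uniquely determined by $g$, the existential quantifiers introduce no ambiguity, and correctness follows clause by clause from the cited corollaries.

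The main obstacle is that each cited corollary is stated under side conditions: $a\neq1$, $u\neq1$, and exponents encoded in the correct copy $\Z_a$ versus $\Z_d$. All of these must be matched so that one formula works uniformly in the parameter $a$. In particular, Corollary \ref{co:6} requires $d\in A$ and $d\neq 1$; this is exactly why the $d=1$ case is split off and handled with $Exp_N$. I expect to check that Corollaries \ref{co:6} and \ref{co:7} were proved uniformly in the base point rather than for a fixed $a$; their proofs go through Lemmas \ref{le:4}, \ref{le:5} and \ref{le:7}, which hold for arbitrary nontrivial $a\in A$, so this should be unproblematic.
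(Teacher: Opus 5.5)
Your proposal is correct and follows the paper's route. The paper's proof is only the remark that the theorem follows from Corollaries \ref{co:4}, \ref{co:6} and \ref{co:7}; your case analysis on $g=du$ is exactly the bookkeeping that citation leaves implicit: $A=C_G(a)$ from Lemma \ref{le:2}, the transfer $a^k\mapsto d^k$ via $Exp_A$, and the degenerate cases $d=1$ or $u=1$ split off.
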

 \begin{proof}
     It follows from Corollaries \ref{co:4}, \ref{co:6}, and \ref{co:7}.
 \end{proof}
 
 \subsection{Interpretation of the $\Z A$-action on $N$ in $G$}

In this section, we show that the standard action of $\Z A$ on $N$ is interpretable in $G$ with parameters $a_1, \ldots, a_m, b_1, \ldots, b_n$. We fix an element $a = a_1 \in A$ and use the interpretation $\Z_a$ of the ring of integers $\Z$ in $G$.

 For a tuple $\bar \alpha  = (\alpha_1, \ldots,\alpha_m) \in \Z^m$  denote by $\lambda_{\bar \alpha}$ the homomorphism $\lambda_{\bar \alpha}: \Z[a_1, \ldots,a_m] \to \mathbb Z$ such that $a_i \to \alpha_i, i = 1, \ldots,m$.  The kernel $I_{\bar \alpha}$ of $\lambda_{\bar \alpha}$ is the  ideal generated in  $\Z[a_1, \ldots,a_m] $  by  $\{a_1- \alpha_1, \ldots, a_m -\alpha_m\}$.
Notice, that for every polynomial $P = P(a_1, \ldots,a_m) \in \Z[a_1, \ldots,a_m]$ one has $\lambda_{\bar \alpha}(P) = P(\alpha_1, \ldots,\alpha_m)$, so 
$$
P(a_1, \ldots,a_m) = P(\alpha_1, \ldots,\alpha_m)  + \Sigma_{i=1}^m(a_i-\alpha_i)f_i,
$$
for some $f_i \in \Z[a_1, \ldots,a_m]$.

Let $C$ and $D$ be rings and $\Lambda$ a set of homomorphisms from $C$ into $D$.  Recall that $C$ is discriminated into $D$ by a set $\Lambda$ if for any finite subset $C_0 \subseteq C$ there is a homomorphism $\lambda \in \Lambda$ which is injective on $C_0$.  

\begin{lemma} \label{claim1} The ring $\Z[a_1, \ldots,a_m]$ is discriminated into $\Z$ by the set of homomorphisms $\{\lambda_{\bar \alpha} \mid \bar \alpha   \in \Z^m\}$. 
\end{lemma}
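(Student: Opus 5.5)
The plan is to reduce discrimination to separation of a single nonzero polynomial, and then to show that a nonzero polynomial cannot vanish on all integer points. Take a finite subset $C_0=\{P_1,\ldots,P_k\}\subseteq \Z[a_1,\ldots,a_m]$. A homomorphism $\lambda_{\bar\alpha}$ is injective on $C_0$ if and only if $\lambda_{\bar\alpha}(P_i-P_j)\neq 0$ for all $i<j$. This holds exactly when $\lambda_{\bar\alpha}(F)\neq 0$, where
$$F=\prod_{i<j}(P_i-P_j).$$
Since $\Z[a_1,\ldots,a_m]$ is an integral domain and the $P_i$ are pairwise distinct, $F\neq 0$. Because $\lambda_{\bar\alpha}$ is a ring homomorphism, $\lambda_{\bar\alpha}(F)=\prod_{i<j}\lambda_{\bar\alpha}(P_i-P_j)$. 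It therefore suffices to prove the following claim: every nonzero $F\in\Z[a_1,\ldots,a_m]$ has some $\bar\alpha\in\Z^m$ with $F(\alpha_1,\ldots,\alpha_m)\neq 0$.

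I would prove the claim by induction on $m$. For $m=1$, a nonzero polynomial in one variable over the integral domain $\Z$ has at most $\deg F$ roots, while $\Z$ is infinite, so some integer is not a root.

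For the inductive step, write $F=\sum_{t=0}^{d} F_t(a_1,\ldots,a_{m-1})\,a_m^t$ with $F_d\neq 0$. By the inductive hypothesis there are $\alpha_1,\ldots,\alpha_{m-1}\in\Z$ with $F_d(\alpha_1,\ldots,\alpha_{m-1})\neq 0$. Substituting these values turns $F$ into a nonzero one-variable polynomial in $a_m$, and the case $m=1$ supplies $\alpha_m$.

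There is no genuine obstacle here. The only point needing care is the reduction from injectivity on $C_0$ to the nonvanishing of the single product $F$, which uses both that $\Z[a_1,\ldots,a_m]$ is a domain and that $\lambda_{\bar\alpha}$ is multiplicative.
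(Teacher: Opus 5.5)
Your proof is correct and follows the same route as the paper, whose proof consists only of the words ``by induction on $m$.'' You supply the details the paper omits: the reduction of injectivity on a finite set to the non-vanishing of the single polynomial $\prod_{i<j}(P_i-P_j)$, and the standard leading-coefficient induction showing that a nonzero polynomial does not vanish on all of $\Z^m$. That non-vanishing statement is exactly the form in which the lemma is later used in Lemma~\ref{le:11}.
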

\begin{proof}
    By induction on $m$.
\end{proof}

 Denote by $(N)^{I_{\bar \alpha}}$ the submodule of the $\Z A$-module  $N$ generated by $N^{I_{\bar \alpha}}$. Observe, that every element $u \in N^{I_{\bar \alpha}}$, as well as in $(N)^{I_{\bar \alpha}}$ admits a decomposition of the type $ u = \Pi_i b_i^{U_i}$, where $U_i \in I_{\bar \alpha}$. As an abelian group, $(N)^{I_{\bar \alpha}}$  is generated by the set $\{g^Q \mid g \in N, Q \in I_{\bar \alpha}\}$, hence by the set $\{g^{a_i - \alpha_i} \mid g \in N, i = 1, \ldots, m\}$. The result below shows that    the  subgroup $(N)^{I_{\bar \alpha}}$ is definable in $G$ uniformly in $a_1, \ldots,a_m$ and $(\alpha_1, \ldots,\alpha_m)$. Of course, integers are not elements of $G$, so instead of $(\alpha_1, \ldots,\alpha_m)$ we use their interpretations $a^{\alpha_1}, \ldots, a^{\alpha_m}$ in $\Z_a$.
 
\begin{lemma}
   There is a formula $\phi(x,\bar y, \bar z)$ of the group language such that for any basis $(a_1, \ldots,a_m)$ of $A$ and any tuple $(\alpha_1, \ldots,\alpha_m) \in \Z^m$ the formula 
    $$
    \phi(x,a_1, \ldots,a_m, a^{\alpha_1}, \ldots, a^{\alpha_m})
    $$
    defines in $G$ the subgroup $(N)^{I_{\bar \alpha}}$.
\end{lemma}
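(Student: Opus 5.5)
The subgroup $(N)^{I_{\bar \alpha}}$ is, as an abelian group, the product of the $m$ subgroups
$$
N^{a_i-\alpha_i} = \{ g^{a_i}g^{-\alpha_i} \mid g \in N\}, \qquad i = 1, \ldots, m,
$$
since $N^{a_i - \alpha_i}$ is the image of $N$ under the endomorphism $g \mapsto g^{a_i-\alpha_i}$, and these images generate $(N)^{I_{\bar \alpha}}$ by the remark preceding the statement. So I would define
$$
\phi(x,\bar y,\bar z) \;=\; \phi_N(x)\wedge \exists g_1\ldots\exists g_m \Big(\bigwedge_{i=1}^m \phi_N(g_i)\ \wedge\ x = \prod_{i=1}^m (y_i^{-1} g_i y_i)\cdot h_i\Big),
$$
where each $h_i$ is an existentially quantified element required to satisfy $Exp_N(h_i, g_i, z_i^{-1}, a)$, or $h_i=1$ if $g_i=1$.

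For this formula I would use Corollary \ref{co:7}. It lets one express $h_i = g_i^{-\alpha_i}$ from $g_i$ and $z_i = a^{\alpha_i}$, given that $a^{-\alpha_i} = z_i^{-1}$ lies in $\Z_a$. Conjugation $y_i^{-1}g_iy_i$ gives $g_i^{a_i}$, so the $i$-th factor is exactly $g_i^{a_i-\alpha_i}$. The case $g_i = 1$ is excluded from Corollary \ref{co:7}, so it has to be handled separately by the disjunction above; alternatively Theorem \ref{th:2} gives exponentiation on all of $G$ directly.

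The parameter $a$ must also be expressed without fixing it. Since the statement lets $(a_1,\ldots,a_m)$ range over all bases of $A$ and the integers are coded in $\Z_a$ with $a=a_1$, I would take $a := y_1$. Thus $z_i$ is read in $\Z_{y_1}$, and $Exp_N(h_i,g_i,z_i^{-1},y_1)$ is uniform in the basis. If one instead wants a fixed $a$, Corollary \ref{co:3} transfers between $\Z_a$ and $\Z_{y_1}$ uniformly.

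Correctness then reduces to two checks. First, every element of $(N)^{I_{\bar\alpha}}$ is a finite product of elements $g^{Q}$ with $Q\in I_{\bar\alpha}$. Writing $Q=\sum_i (a_i-\alpha_i)f_i$ and using that $N$ is abelian, such a product collapses to $\prod_i g_i^{a_i-\alpha_i}$ with $g_i\in N$, so a single witness tuple suffices. This is the only point where one needs care: the existential quantifier ranges over a fixed number $m$ of witnesses, and the collapse works because $N^{a_i-\alpha_i}$ is already a subgroup. Second, conversely, any such product lies in $(N)^{I_{\bar\alpha}}$. I expect no real obstacle beyond this. The main subtlety is making the exponentiation formulas uniform in the parameters $a_i$ and $a^{\alpha_i}$, including the degenerate cases $g_i=1$ and $\alpha_i=0$, which Theorem \ref{th:2} handles.
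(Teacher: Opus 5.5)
Your proposal is correct and follows essentially the same route as the paper: write each element of $(N)^{I_{\bar\alpha}}$ as $\prod_i g_i^{a_i}g_i^{-\alpha_i}$ with $g_i\in N$, express $g_i^{a_i}$ by conjugation, and express $g_i^{-\alpha_i}$ by an exponentiation formula. Your version is somewhat more careful than the paper's: you justify that $m$ witnesses suffice, handle $g_i=1$, take $a:=y_1$ for uniformity, and cite the appropriate exponentiation formula (Corollary~\ref{co:7} or Theorem~\ref{th:2}), whereas the paper cites $Exp_A$ from Corollary~\ref{co:4}, which only exponentiates elements of $A$.
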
 
 \begin{proof}
     Indeed, every element $u \in (N)^{I_{\bar \alpha}}$ can be written in the form 
     $$
     u = g_1^{a_1 -\alpha _1}\ldots  g_m^{a_m -\alpha _m} = g_1^{a_1}g_1^{-\alpha _1}\ldots  g_m^{a_m}g_m^{-\alpha _m}, 
     $$
     where $g_1,\ldots ,g_m\in N$. The elements $g_i^{\alpha_i}$ are the conjugations $g_i^{a_i} = a_i^{-1}g_ia_i$, so they are defined by formulas in $G$ with parameters $a_i$, $i = 1, \ldots,m$. Exponents $g_i^{-\alpha _i}$ can be written by formulas in $G$ using the exponentiation formula $Exp_A(x_1,x_2,y_1,z_1)$ from Corollary \ref{co:4}.

 \end{proof}

\begin{lemma} \label{le:11}
Let $g, h \in N$ and $P\in \Z[\bar a]$. Then $g^P = h$ if and only if for every $\alpha_1, \ldots \alpha_m  \in \Z$ the following condition holds:
\begin{equation} \label{eq:g-h-P}
g^{P(\alpha_1, \ldots,\alpha_m)} = h \ mod \ (N)^{I_{\bar \alpha}}.
\end{equation}
\end{lemma}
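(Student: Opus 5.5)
The plan is to move everything into coordinates in the free $\Z A$-module $N$ with basis $b_1,\ldots,b_n$, and then handle the two directions separately. The forward direction is a direct computation. The converse uses the discrimination result of Lemma \ref{claim1}.

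\emph{Forward direction.} Assume $g^P=h$. As recorded before Lemma \ref{claim1},
$$
P(a_1,\ldots,a_m)=P(\alpha_1,\ldots,\alpha_m)+\sum_{i=1}^m (a_i-\alpha_i)f_i
$$
with $f_i\in\Z[\bar a]$. Hence
$$
h=g^P=g^{P(\bar\alpha)}\prod_i \bigl(g^{f_i}\bigr)^{a_i-\alpha_i}.
$$
Each factor $(g^{f_i})^{a_i-\alpha_i}$ lies in $(N)^{I_{\bar\alpha}}$, which is the subgroup generated by the elements $x^{a_i-\alpha_i}$ with $x\in N$. Therefore (\ref{eq:g-h-P}) holds for every $\bar\alpha\in\Z^m$.

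\emph{Converse.} Assume (\ref{eq:g-h-P}) holds for all $\bar\alpha$, and write $d=h^{-1}g^P\in N$. By the forward computation, $g^P\equiv g^{P(\bar\alpha)}$ modulo $(N)^{I_{\bar\alpha}}$. Combined with the hypothesis, this gives $d\in (N)^{I_{\bar\alpha}}$ for every $\bar\alpha$. It remains to show $d=1$.

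Write $d=\prod_j b_j^{D_j}$ with $D_j\in\Z A$. Since $N$ is free on $b_1,\ldots,b_n$, membership of $d$ in $(N)^{I_{\bar\alpha}}$ means that each $D_j$ lies in the ideal of $\Z A$ generated by $a_1-\alpha_1,\ldots,a_m-\alpha_m$.

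\emph{Main obstacle: Laurent versus ordinary polynomials.} The ideal in $\Z A$ generated by the $a_i-\alpha_i$ is the unit ideal whenever some $\alpha_i=0$, because $a_i$ is a unit in $\Z A$. Also, $g$ and $h$ may have Laurent coefficients. I would deal with this as follows.
\begin{enumerate}
\item Multiply $d$ by a suitable monomial $M=\bar a^{\bar\beta}$ so that all $MD_j$ lie in $\Z[\bar a]$. The subgroup $(N)^{I_{\bar\alpha}}$ is a $\Z A$-submodule, so the membership hypothesis is preserved after multiplying by $M$.
\item Restrict to tuples $\bar\alpha$ with all $\alpha_i\neq 0$.
\item For such $\bar\alpha$, the evaluation $\lambda_{\bar\alpha}$ extends to a homomorphism $\Z A\to\Z[1/(\alpha_1\cdots\alpha_m)]$. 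Its kernel contains the ideal generated by the $a_i-\alpha_i$, so every element of that ideal evaluates to $0$. Consequently $(MD_j)(\bar\alpha)=0$ for all such $\bar\alpha$.
\item A polynomial in $\Z[\bar a]$ vanishing on $(\Z\setminus\{0\})^m$ is zero. This is the same induction on $m$ used for Lemma \ref{claim1}: a one-variable integer polynomial with infinitely many roots vanishes.
\end{enumerate}
It follows that $MD_j=0$, hence $D_j=0$ for all $j$. So $d=1$, that is, $g^P=h$.

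If the intended reading restricts to genuine integer evaluations with $\alpha_i\neq 0$, the argument above is exactly what is needed. If $\alpha_i=0$ is allowed, the hypothesis only becomes weaker, since $(N)^{I_{\bar\alpha}}=N$ and (\ref{eq:g-h-P}) is then vacuous. The converse still goes through using only the tuples with $\alpha_i\neq 0$.
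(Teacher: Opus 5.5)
Your proof is correct and follows essentially the same route as the paper. Both arguments write everything in coordinates over the free $\Z A$-module $N$, clear monomial denominators, evaluate at integer points $\bar\alpha$, and conclude via the discrimination argument of Lemma~\ref{claim1}. Your restriction to tuples with all $\alpha_i\neq 0$, with evaluation extended to $\Z[1/(\alpha_1\cdots\alpha_m)]$, makes rigorous a step the paper glosses over: the paper asserts that the coordinates $U_i$ of $u\in (N)^{I_{\bar\alpha}}$ lie in $I_{\bar\alpha}$ and hence vanish at $\bar\alpha$, which fails when some $\alpha_i=0$ (then $(N)^{I_{\bar\alpha}}=N$).
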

\begin{proof} 
Let $g^p = h$. Fix an arbitrary $\bar \alpha = (\alpha_1, \ldots,\alpha_m) \in \Z^m$.
As we mentioned above 
$$
P(a_1, \ldots,a_m) = P(\alpha_1, \ldots,\alpha_m) + U 
$$
for some $U \in I_{\bar \alpha}$. Hence 
$$
h = g^P = g^{p(\bar \alpha) +U}= g^{P(\bar \alpha)} g^U
$$
so the condition (\ref{eq:g-h-P}) holds.

Conversely, suppose for  $g, h \in N$, $P \in \Z[\bar a]$, and arbitrary $\bar \alpha = (\alpha_1, \ldots,\alpha_m) \in \Z^m$  the condition (\ref{eq:g-h-P}) holds. We mentioned above, that $P = P(\bar \alpha) \ mod \ I_{\bar \alpha}$,  so (\ref{eq:g-h-P}) implies that 
\begin{equation} \label{eq:8}
g^P = h u
\end{equation}
for some $u \in (N)^{I_{\bar \alpha}}$. We need to show that $g^P = h$. 

Decompose elements $g, h, u$ with respect to the basis $b_1, \ldots,b_n$:

$$
g = \Pi_i b_i^{R_i}, \ \ h =  \Pi_i b_i^{S_i}, \ \ u = \Pi_i b_i^{U_i}, 
$$
where $R_i, S_i$ are Laurent polynomials from $\Z A$ and $U \in I_{\bar \alpha}$. Then the equality \ref{eq:8} implies 
\begin{equation} \label{eq:9}
\Pi_i b_i^{R_iP} =  \Pi_i b_i^{S_i +U_i}
\end{equation}
Since $N$ is free with basis $b_1, \ldots,b_n$  it follows from  (\ref{eq:9})  that for $i=1,\ldots ,n$ 
\begin{equation} \label{eq:10}
R_i P = S_i +U_i
\end{equation}
in the ring $\Z A$.
 Let
 $$
 R_i = \frac{F_i}{K_i}, \ \ S_i = \frac{H_i}{L_i},
 $$
 where $F_i, H_i$ are ordinary polynomials from $\Z[\bar a]$, and 
$K_i, L_i$ are non-zero monomials from $\Z[\bar a]$. Now, it follows from (\ref{eq:10}) that 
$$
F_i P L_i =  H_i K_i + U_i K_i L_i
$$
in the polynomial ring $\Z [\bar a]$.
 
 Evaluating these polynomials at $a_1 = \alpha_1, \ldots ,a_m = \alpha_m$ one gets 
\begin{equation} \label{eq:11}
 F_i(\bar \alpha) P(\bar \alpha) L_i(\bar \alpha) =  H_i(\bar \alpha) K_i(\bar \alpha) 
\end{equation}
since $U_i(\bar \alpha)= 0$.
  Hence by Lemma \ref{claim1}, we have equality in the polynomial ring $\Z[\bar a]$:
 $$
 F_i P L_i =  H_i K_i,
$$
which implies that 
$$
\frac{F_i}{K_i} P = \frac{H_i}{L_i}
$$
in the ring of Laurent polynomials $\Z A$, that is 
$$
R_i P = S_i
$$
for every $i = 1, \ldots,n$. Hence  $g^P = h$, as claimed.
\end{proof}

Now we can prove that in the group $G$ the action of $\Z A$ on $N$ is definable. Again, Laurent polynomials $Q \in \Z A$ are not elements of $G$, so we first represent them by their codes $k =   \nu (Q)$, and then represent the number $k$ as $a^k$ in the interpretation $\Z_a$ of $\Z$ in $G$. As above, we may choose $a= a_1$. The result is stated as follows.

\begin{theorem} \label{th:3}
There is a formula $Act_{\Z A}(x_1,x_2, x_3,y_1)$  of group theory language such that for any $a \in A$, $g,h \in N$, $Q \in \Z A$, and $k=  \nu (Q)$ the following holds:
$$
g^Q = h \Longleftrightarrow G\models Act_{\Z A}(g,h, a^k,a).
$$

\end{theorem}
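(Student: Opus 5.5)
The plan is to use Lemma \ref{le:11}, which converts the statement $g^P=h$ for an ordinary polynomial $P$ into a family of congruences. Each congruence involves only integer exponentiation and the uniformly definable subgroups $(N)^{I_{\bar\alpha}}$. The passage from Laurent polynomials to polynomials will be handled arithmetically inside $\Z_a$.

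\textbf{Step 1 (reduction to ordinary polynomials).} Write $Q = P/\bar a^{\bar\beta}$, with $P\in\Z[\bar a]$ and $\bar\beta\in\N^m$ read off from $k=\nu(Q)$. Then
\[
g^Q=h \iff g^P = h^{\bar a^{\bar\beta}}.
\]
The right-hand side is $h$ conjugated by $a_1^{\beta_1}\cdots a_m^{\beta_m}$. By Corollary \ref{co:4}, this product is definable from $a^{\beta_1},\dots,a^{\beta_m}$ in $\Z_a$. The map $k\mapsto(\tau(u_Q),\tau(v_Q))$ and the decoding of $\bar\beta$ are computable, hence definable in $\Z$. Transported through the interpretation $\Z_a$, they become definable in $G$ with parameter $a$.

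\textbf{Step 2 (polynomial case).} By Lemma \ref{le:11}, $g^P=h'$ holds if and only if
\[
\forall\bar\alpha\in\Z^m:\quad g^{P(\bar\alpha)}\equiv h' \bmod (N)^{I_{\bar\alpha}}.
\]
I will express this by quantifying over $m$-tuples $(x_1,\dots,x_m)$ in $Z_a$, which is definable by Corollary \ref{co:1}, standing for $a^{\alpha_i}$. The evaluation map $(\nu(P),\bar\alpha)\mapsto P(\bar\alpha)$ is computable, so there is an arithmetic formula defining its graph. Translated into $\Z_a$, it yields $a^{P(\bar\alpha)}$ from $a^k$ and $a^{\alpha_1},\dots,a^{\alpha_m}$.

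Then $g^{P(\bar\alpha)}$ is obtained via $Exp_N$ (Corollary \ref{co:7}) or $Exp_G$ (Theorem \ref{th:2}). The coset condition uses the formula $\phi(x,a_1,\dots,a_m,a^{\alpha_1},\dots,a^{\alpha_m})$ defining $(N)^{I_{\bar\alpha}}$: we require $\phi\bigl((g^{P(\bar\alpha)})^{-1}h'\bigr)$.

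\textbf{Parameters and the main obstacle.} The basis elements $a_1,\dots,a_m$ enter as parameters. To keep only $a$ as a visible parameter, one could existentially (or universally) quantify a top basis via $Base_{top}$ with $x_1=a$; the statement as intended presumably allows $a=a_1$ together with the remaining basis as parameters, and I would state it that way.

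The main obstacle is making the uniform arithmetic translations precise. Every computable function used (decoding $\nu$, evaluation of polynomials, extracting $\bar\beta$) must be rendered through the single interpretation $\Z_a$ by one formula valid for all $a\neq 1$. Since $\mu_a$ is an isomorphism onto $\Z$ defined uniformly in $a$, any arithmetic formula $\psi(\bar n)$ translates into a group formula $\psi^*(\bar x,a)$ with $G\models\psi^*(a^{\bar n},a)\iff\Z\models\psi(\bar n)$, and this settles it.

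A further subtlety is that $Exp_N$ and Lemma \ref{le:11} are stated for $u\neq 1$ and for polynomials rather than Laurent polynomials. I would handle $g=1$ as a separate disjunct, and Step 1 handles the Laurent case.
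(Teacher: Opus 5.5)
Your proposal is correct and follows the paper's own argument: Lemma \ref{le:11} with $\bar\alpha$ quantified over $\Z_a$, an arithmetic evaluation formula for $P(\bar\alpha)$ transported into $\Z_a$, integer powers via Theorem \ref{th:2}, the definable subgroups $(N)^{I_{\bar\alpha}}$, and the reduction $g^{P/M}=h\iff g^P=h^M$. The only differences are cosmetic: the paper does this last reduction at the end, and it computes $h^M$ with the polynomial-action formula rather than by conjugating with $a_1^{\beta_1}\cdots a_m^{\beta_m}$.

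You are right that $a_1,\dots,a_m$ must be parameters; this is what the paper actually uses (cf.\ the opening of that subsection and Corollary \ref{co:8}). However, drop the aside about quantifying over top bases with first entry $a$. Any formula whose only parameter is $a=a_1$ is preserved by the automorphism of $G$ that fixes $a_1,a_3,\dots,a_m,b_1,\dots,b_n$ and sends $a_2\mapsto a_2a_1$. Applying it to the true instance $Act_{\Z A}(b_1,b_1^{a_2},a_1^{\nu(a_2)},a_1)$ would give $b_1^{a_2}=b_1^{a_2a_1}$, which is false, so for $m\ge 2$ the basis genuinely has to be a parameter.
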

\begin{proof}
Fix $a \in A, a\neq 1$. We prove first, that the condition (\ref{eq:g-h-P}) from Lemma \ref{le:11} 
 can be written by formulas in $G$.  Let $P \in \Z[a_1, \ldots,a_n]$.  Given the $k = \nu (P) \in \Z$   and a tuple of numbers $\bar \alpha = (\alpha_1, \ldots, \alpha_m)$ one can compute the value $s = P(\bar \alpha) \in \Z$, so there is a formula in arithmetic $Val_\Z(x_1,x_2, \bar y)$ such that 
 $$
 s = P(\bar \alpha) \Longleftrightarrow \Z \models Val_\Z (\nu (P),s, \bar \alpha).
 $$
Hence, there is a formula $Val_G(x_1,x_2, \bar y, z)$ of group theory such that 
$$
a^s =a^{P(\bar \alpha)} \Longleftrightarrow G \models Val_G(a^{\nu (P)},a^s, a^{\alpha_1}, \ldots,a^{\alpha_m}, a)
$$
To prove this one needs to rewrite the formula $Val_\Z(x_1,x_2, \bar y)$ into a formula $Val_G(x_1,x_2, \bar y, z)$ using the interpretation $\Z_a$ in $G$.
The next step is to write a formula that for any $g, f \in N$, $P \in \Z[\bar a]$, and $\bar \alpha = (\alpha_1, \ldots, \alpha_m) \in \Z^m$ states that  $g^{P(\bar \alpha)} = f$. This can be done using the formula $Exp_G(x_1,x_2,x_3,y)$ from Theorem \ref{th:2}. Indeed, by Theorem \ref{th:2} 
$$
g^s = f \Longleftrightarrow G \models Exp_G(g,f,a^s,a).
$$
Now, given $a^{\nu (P)}$, and $a^s, a^{\alpha_1}, \ldots,a^{\alpha_m}, a$ the formula 
$$
Val_G(a^{\nu (P)},x_2, a^{\alpha_1}, \ldots,a^{\alpha_m}, a)
$$ 
holds in $G$ only for $x_2= a^{p(\bar \alpha)}$. Therefore,
$$
g^{P(\bar \alpha)} = f \Longleftrightarrow G \models \exists x_2 Val_G(a^{\nu (P)},x_2, a^{\alpha_1}, \ldots,a^{\alpha_m}, a) \wedge Exp_G(g,f,x_2,a).
$$
We denote such a formula by $Exp_{N,\Z A}$.

Now we are able to write down by a formula the condition (\ref{eq:g-h-P}) from Lemma \ref{le:11}. Indeed, for any $\bar \alpha = (\alpha_1, \ldots, \alpha_m) \in \Z^m$ the set $(N)^{I_{\bar \alpha}}$ is definable in $G$. The condition $g^{P(\bar \alpha)} = f$ can be expressed by formula $Exp_{N,\Z A}$, so there is a formula $Act_{\Z [\bar a]}(x_1,x_2, x_3,y_1)$ of group language such that for any $a \in A$, $g,h \in N$, $P \in \Z [\bar a]$, and $k=  \nu (P)$ the following holds:
$$
g^P = h \Longleftrightarrow G\models Act_{\Z [\bar a]}(g,h, a^k,a).
$$
This defines the action of the ring of ordinary polynomials $\Z[\bar a]$ on $N$.

The last step is to get a formula $Act_{\Z A}(x_1,x_2, x_3,y_1)$ that defines the action of the ring of the Laurent polynomials  $\Z A$ on $N$. 
Let $Q \in \Z A$. Then $Q = \frac{P}{M}$, where $P,M \in \Z[\bar a]$ and $M$ is a monomial. In this case, $g^Q = h$ if and only if $g^P =h^M$, and the exponentiation here by the polynomials $P$ and $M$, so the formula   $Act_{\Z [\bar a]}(x_1,x_2, x_3,y_1)$ does it for us.

This proves the theorem.

\end{proof}

\begin{cor}
    
\label{co:8}
    There is a formula $\Lambda_N(x, \bar z, \bar t, \bar s)$ of group language such that for any $u \in N$, any top basis $\bar c = (c_1, \ldots, c_m) $ of $G$, any tuple $\bar u = (u_1, \ldots,u_n)$ of elements of $N$, any tuple  $\bar P = (P_1, \ldots,P_n) \in \Z [c_1^{\pm 1}, \ldots,c_m^{\pm 1}]$ one has 
\begin{equation} \label{eq:12}
u =  u_1^{P_1}  \ldots u_n^{P_n} \Longleftrightarrow G \models \Lambda _N(u,c_1^{\nu(\bar P)}, \bar c, \bar u),
\end{equation}
where  $c_1^{\nu(\bar P)} = (c_1^{\nu(P_1)}, \ldots,c_1^{\nu(P_n)})$.
\end{cor}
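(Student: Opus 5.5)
The plan is to build $\Lambda_N$ directly from Theorem \ref{th:3}, applied with the top basis $\bar c$ in place of $\bar a$ and with $a=c_1$. The first thing to check is that Theorem \ref{th:3} is uniform in the choice of top basis. Its proof only used three ingredients, each expressed by a formula whose parameters are the elements $a_1,\ldots,a_m$ and $a$. These ingredients are the definability of the subgroups $(N)^{I_{\bar\alpha}}$ (via conjugation by the $a_i$ and $Exp_A$), the formula $Exp_G$ from Theorem \ref{th:2}, and the arithmetic formula $Val$ transported into $\Z_a$.

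Suppose $\bar c$ is a top basis. By Definition \ref{de:bases} and Lemma \ref{le:basis}, there is an automorphism of $G$ sending $\bar a$ to $\bar c$ and fixing $N$ setwise. Automorphisms preserve the truth of formulas. Hence the formula $Act_{\Z A}$, with its variables for the basis made explicit (say $Act_{\Z A}(x_1,x_2,x_3,y_1,\bar y)$, where $\bar y$ stands for the basis), satisfies the following for all $g,h\in N$ and $Q\in\Z[c_1^{\pm1},\ldots,c_m^{\pm1}]$:
$$
g^Q=h \Longleftrightarrow G\models Act_{\Z A}(g,h,c_1^{\nu(Q)},c_1,\bar c).
$$
Here $\nu(Q)$ means the code of $Q$ computed in the variables $c_i$. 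The statement of Theorem \ref{th:3} suppresses the dependence on $a_1,\ldots,a_m$ as parameters, so I will first restate it with $\bar y$ explicit. Then I will transfer it by the automorphism and note that $Exp_A$ and $\Z_{c_1}$ transform correctly, since $c_1$ is the image of $a_1$.

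Next I write
$$
\Lambda_N(x,\bar z,\bar t,\bar s)=\exists w_1\ldots\exists w_n\Bigl(\bigwedge_{j=1}^n \bigl(w_j\in N \wedge Act_{\Z A}(s_j,w_j,z_j,t_1,\bar t)\bigr)\wedge x=w_1\cdots w_n\Bigr),
$$
where "$w_j\in N$" is the formula $\phi_N$ of Lemma \ref{le:3}. I will also add the conjunct $\phi_N(s_j)$, and possibly $Base_{top}(\bar t)$, so that the formula is well behaved in general. The equivalence (\ref{eq:12}) then follows because each $w_j$ is forced to equal $u_j^{P_j}$, which is an element of $N$. The case $u_j=1$ is harmless, since $Act$ gives $h=1$ there.

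The main obstacle I expect is the uniformity point: the proof of Theorem \ref{th:3} tacitly fixes $\bar a$ and the coding $\nu$ with respect to $\bar a$. I must make sure that $\nu(P_j)$, for $P_j$ written in the $c_i$, is the same integer as the code of the corresponding polynomial in the $a_i$, so that the automorphism argument applies verbatim. I would handle this by defining $\nu$ abstractly on exponent tuples, so that it is independent of the names of the variables.
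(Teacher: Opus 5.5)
Your proposal is correct and follows the same route as the paper: obtain the action formula from Theorem~\ref{th:3} for the standard top basis $\bar a$, transfer it to an arbitrary top basis $\bar c$ via the automorphism supplied by Lemma~\ref{le:basis}, and assemble $u_1^{P_1}\cdots u_n^{P_n}$ by existentially quantifying over the factors. You also make explicit two points that the paper's one-line automorphism argument leaves implicit: the suppressed parameters $a_1,\ldots,a_m$ in $Act_{\Z A}$, and the need for a coding $\nu$ that does not depend on the names of the variables.
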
 

\begin{proof}
    Writing down the formula $\Lambda_N(x,\bar z, \bar t, \bar s)$ suffices to define the powers $u_j^{P_j}$ by formulas in $G$. If $c_1 = a_1, \ldots,c_m = a_m$ then this is done in Theorem \ref{th:3}. Since for the top basis $\bar c$ there an automorphism $\phi \in Aut(G)$ which maps $\bar a$ to $\bar c$ component-wise, it follows that the result holds for $\bar c$ as well.
\end{proof}

\begin{cor} \label{co:9}
    There is a formula $\Lambda_G(x,\bar y, \bar z, \bar t, \bar s)$ of group language such that for any $g \in G$, any top basis $\bar c = (c_1, \ldots, c_m) $ of $G$, any tuple $\bar u = (u_1, \ldots,u_n)$ of elements of $N$, any tuple $\bar \gamma = (\gamma_1, \ldots,\gamma_m) \in \Z^m$, and any tuple $\bar P = (P_1, \ldots,P_n) \in \Z [c_1^{\pm 1}, \ldots,c_m^{\pm 1}]$ one has 
\begin{equation} \label{eq:12}
g = c_1^{\gamma_1} \ldots c_m^{ \gamma_m} u_1^{P_1}  \ldots u_n^{P_n} \Longleftrightarrow G \models \Lambda_G (g,c_1^{\bar \gamma}, c_1^{\nu(\bar P)},\bar c, \bar u),
\end{equation}
where $c_1^{\bar \gamma}  = (c_1^{\gamma_1}, \ldots, c_1^{\gamma_m})$ and $c_1^{\nu(\bar P)} = (c_1^{\nu(P_1)}, \ldots,c_1^{\nu(P_n)})$.
\end{cor}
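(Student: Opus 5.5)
The formula $\Lambda_G$ will be a conjunction: the top part $c_1^{\gamma_1}\cdots c_m^{\gamma_m}$ is produced with the $\Z$-exponentiation formulas of Section~\ref{se:4}, the bottom part $u_1^{P_1}\cdots u_n^{P_n}$ is handed to $\Lambda_N$ from Corollary~\ref{co:8}, and the two pieces are multiplied. Concretely, I would set
\[
\Lambda_G(x,\bar y,\bar z,\bar t,\bar s) \;=\; \exists w\,\exists v\,\bigl( \Pi(w,\bar y,\bar t) \wedge \Lambda_N(v,\bar z,\bar t,\bar s) \wedge x = wv \bigr),
\]
where $\Pi(w,\bar y,\bar t)$ is required to say that $w = t_1^{\gamma_1}\cdots t_m^{\gamma_m}$ whenever $\bar t=\bar c$ is a top basis and $y_i = c_1^{\gamma_i}$.

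To construct $\Pi$, I would use Corollary~\ref{co:4}, taking $a=c_1$ and $\bar d=\bar c$. That corollary is stated for elements of $A$, so I first transfer it to an arbitrary top basis. By Lemma~\ref{le:basis}, if $\bar c$ is a top basis then some automorphism $\phi$ of $G$ sends $\bar a$ to $\bar c$. Formulas without parameters are preserved by automorphisms, so the statement of Corollary~\ref{co:4} carries over with $A$ replaced by $C=\langle c_1,\ldots,c_m\rangle=\phi(A)$. I would then define $\Pi(w,\bar y,\bar t)$ as $Prod_{A,m}(w,t_1,\bar t,\bar y)$. The one point needing care is that the variables $y_i$ range over the interpretation $\Z_{c_1}$, that is, over the cyclic subgroup $\langle c_1\rangle$. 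This subgroup is definable by $Cyc_A(c_1,y)$ from Corollary~\ref{co:1}, and under $\phi$ it corresponds to $\Z_{a_1}$, so $y_i = c_1^{\gamma_i}$ is interpreted exactly as $\gamma_i$. For the bottom part, Corollary~\ref{co:8} is already stated for an arbitrary top basis $\bar c$ and an arbitrary tuple $\bar u$ in $N$. Hence $\Lambda_N(v, c_1^{\nu(\bar P)},\bar c,\bar u)$ holds exactly when $v=\prod_j u_j^{P_j}$.

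For correctness in both directions: if $g$ equals the stated normal form, take $w=\prod c_i^{\gamma_i}$ and $v=\prod u_j^{P_j}$ as witnesses. Conversely, each of $\Pi$ and $\Lambda_N$ determines its witness uniquely, so $x=wv$ forces $g=\prod c_i^{\gamma_i}\prod u_j^{P_j}$. The only real obstacle is the transfer via $\phi$ described above, so that the coordinates $c_1^{\gamma_i}$ and $c_1^{\nu(P_j)}$ are read correctly in $\Z_{c_1}$. The parameter-free formulation of Lemma~\ref{le:basis} and of Corollaries~\ref{co:4} and~\ref{co:8} is precisely what makes this transfer valid.
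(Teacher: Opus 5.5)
Your proposal is correct and is essentially the paper's argument. The paper proves it "as for Corollary \ref{co:8}": transfer from $\bar a$ to an arbitrary top basis $\bar c$ via an automorphism, using parameter-free formulas, and obtain the top powers $c_i^{\gamma_i}$ from the $\Z$-exponentiation of Theorem \ref{th:2}. Your use of $Prod_{A,m}$ from Corollary \ref{co:4}, which is one of the ingredients of that theorem, makes no real difference. One small tidy-up: Corollary \ref{co:8} is only asserted for $u\in N$, so you should conjoin $\phi_N(v)$ from Lemma \ref{le:3} to your formula; this is what actually justifies your claim that the witness $v$ is unique.
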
 

\begin{proof}
   The argument is similar to Corollary \ref{co:8}. The only difference is that to  define the powers $c_i^{\gamma_i}$ by formulas in $G$ one uses  Theorem \ref{th:2}. 
\end{proof}

\begin{theorem} \label{th:4}
    There is a formula $Base_G$ of group theory language (without parameters) which defines in $G$ the set  of all bases of $G$.
\end{theorem}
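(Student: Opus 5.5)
Our plan is to write $Base_G(x_1,\ldots,x_m,y_1,\ldots,y_n)$ as the conjunction $Base_{top}(\bar x)\wedge \bigwedge_j \phi_N(y_j)\wedge \Psi(\bar x,\bar y)$. Here $Base_{top}$ comes from Lemma \ref{le:basis} and $\phi_N$ from Lemma \ref{le:3}. The formula $\Psi$ will express condition 3) of Definition \ref{de:bases}, restricted to the module part. Specifically, for a top basis $\bar c$ and $\bar u\in N^n$, $\Psi$ should say that $N$ is a free $\Z[c_1^{\pm1},\ldots,c_m^{\pm1}]$-module with basis $\bar u$. The top part is already handled. Once $\bar c$ is a top basis, $C=\langle \bar c\rangle\simeq A$, $C\cap N=1$ and $G=CN$; this follows from the automorphism given by Lemma \ref{le:basis}, under which $N$ is invariant (Lemma \ref{le:2}). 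Hence, by the equivalence 2) $\Leftrightarrow$ 1) of Definition \ref{de:bases}, it remains only to express freeness of the module with basis $\bar u$.

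We would express this in two clauses, quantifying over codes of Laurent polynomials inside the interpretation $\Z_{c_1}$. By Lemma \ref{le:4}, this interpretation is uniformly definable from $c_1$: the set $\{c_1^k\}$ is defined by $Cyc_A(c_1,y)$, and the ring operations come from the Proposition interpreting $\Z$ with parameter $c_1$. The set $\nu(R)$ is definable in $\Z$, so "$z=c_1^k$ with $k\in\nu(R)$" is a formula in $z,c_1$. Let $\Lambda_N(u,\bar z,\bar c,\bar u)$ be the formula of Corollary \ref{co:8}.

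\textbf{Generation:} $\forall u\,(\phi_N(u)\to \exists z_1\ldots z_n\,(\bigwedge_j z_j\in c_1^{\nu(R)} \wedge \Lambda_N(u,\bar z,\bar c,\bar u)))$.

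\textbf{Independence:} for all $z_1,\ldots,z_n\in c_1^{\nu(R)}$, if $\Lambda_N(1,\bar z,\bar c,\bar u)$ holds, then each $z_j$ equals $c_1^{\nu(0)}$. The constant $\nu(0)$ is $0$-definable in $\Z$, hence definable in $\Z_{c_1}$ from $c_1$.

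Correctness follows once we know that $\Lambda_N$ faithfully computes $u_1^{P_1}\cdots u_n^{P_n}$ for every top basis $\bar c$ and every tuple $\bar u\in N^n$; Corollary \ref{co:8} asserts exactly this. Generation together with independence is precisely freeness with basis $\bar u$. Condition 2) of Definition \ref{de:bases} then holds, so $(\bar c,\bar u)$ is a basis. Conversely, any basis satisfies all clauses, since it is an automorphic image of $p=\bar a\cdot\bar b$ and the formula has no parameters.

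The main obstacle is checking that Corollary \ref{co:8} really holds uniformly over arbitrary top bases $\bar c$. In particular, the coding $\nu$ must be read in the variables $c_i$, and the integers must be interpreted via $c_1$ rather than via the fixed $a$. Our approach is to transport along the automorphism $\phi$ with $\phi(\bar a)=\bar c$. This automorphism maps $\Z_a$ to $\Z_{c_1}$ (taking $a=a_1$), and it maps the action of $\Z A$ to the action of $\Z[\bar c^{\pm1}]$ coefficientwise. The formulas are parameter-free in the remaining variables, so their truth is preserved. A secondary point is that quantification over "codes" must range exactly over $c_1^{\nu(R)}$, so that the elements $z_j$ correspond to genuine Laurent polynomials; this is ensured by the definability of $\nu(R)$ in $\Z$.
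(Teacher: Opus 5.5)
Your proposal is correct and follows the paper's route: conjoin $Base_{top}(\bar x)$ with a clause built from the formula $\Lambda_N$ of Corollary~\ref{co:8}. That clause asserts that every $u\in N$ has a unique expression $u_1^{P_1}\cdots u_n^{P_n}$, i.e.\ that $N$ is a free $\Z[c_1^{\pm 1},\ldots,c_m^{\pm 1}]$-module with basis $\bar u$. Your explicit generation and independence clauses over codes in $\Z_{c_1}$, and the transport along the automorphism sending $\bar a$ to $\bar c$, just spell out what the paper calls ``one can easily write a formula'' and what it already uses in proving Corollary~\ref{co:8}.
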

Let $q = (c_1, \ldots,c_m,u_1, \ldots,u_n)$,  where $c_i \in G\smallsetminus N$ and $u_i \in N$. Denote $\bar c = (c_1, \ldots,c_m)$ and  $\bar u = (u_1, \ldots,u_n)$. By Definition \ref{de:bases} $q$ is a basis of $G$ if and only if the following conditions hold:
\begin{enumerate}
    \item [1)] $C = \langle c_1, \ldots,c_m\rangle \simeq A$,
    \item [2)] $G = C \cdot N$,
    \item [3)] $N$ is a free $\Z[c_1^{\pm 1}, \ldots,c_m^{\pm 1}]$-module with basis $u_1, \ldots,u_n$.
\end{enumerate}

It suffices to describe a formula $Base_G(\bar x,\bar y)$ of the group theory language without parameters such that $G \models Base_G(\bar c,\bar u)$ if and only if $q$ satisfies the conditions 1)-3) above. 

By Lemma \ref{le:basis} there is a formula $Base_{top}(x_1, \ldots,x_m)$ of group language (without parameters) such that for any $c_1, \ldots,c_m \in G$   $G \models Base_{top}(c_1, \ldots,c_m)$ if and only if $(c_1, \ldots,c_m)$ is a top basis of $G$, in which case $c_1, \ldots,c_m$ satisfy the conditions 1) and 2). 

By Corollary \ref{co:8} for $u \in N$ and $P_1, \ldots,P_n \in \Z[c_1^{\pm 1}, \ldots,c_m^{\pm 1}]$ the condition $u =   u_1^{P_1}  \ldots u_n^{P_n} $ from  (\ref{eq:12}) is described by the formula $\Lambda_N$. 
Using this one can easily write a formula that states that the element $u$ such that $u =   u_1^{P_1}  \ldots u_n^{P_n} $ is unique in $N$, which means that $N$ is a free $\Z[c_1^{\pm 1}, \ldots,c_m^{\pm 1}]$-module with basis $u_1, \ldots,u_n$. 

This proves the theorem.

\section{Bi-interpretability of $G$ and $\Z$}\label{se:5}

Fix a tuple of parameters $p=(a_1,\ldots ,a_m,b_1,\ldots ,b_n)$ in $G$, where $\{a_1,\ldots ,a_m\}$ and $\{b_1,\ldots ,b_n\}$ are bases of $A$ and $B$ respectively. In this section we prove that $G$ and $\Z$ are bi-interpretable with parameters $\bar p$ in $G$.  

Recall that $\Z$ is interpreted in $G$ on the cyclic subgroup $\langle a_1\rangle$, we denoted this interpretation $\Z_{a_1} = \Gamma(G,\bar p)$ with the coordinate map $\mu_\Gamma : \Z_{a_1} \to \Z$ given by $a_1^\gamma \to \gamma$ for all $\gamma \in \Z$.  

Meanwhile, $G$ is interpreted in $\Z$ by tuples of integers 
$$(\gamma_1, \ldots, \gamma_m, \nu(P_1), \ldots,\nu(P_n)),$$
where $\gamma_i \in \Z$,  $P_i \in \Z[a_1^{\pm 1}, \ldots, a_m^{\pm 1}] = \Z A$, and $\nu(P_i)$ is the code of the Laurent polynomial $P_i$. For simplicity we assume that the enumeration $\nu$ is such that the set $\nu(\Z A) = \{\nu(P) \mid P \in \Z A\}$ is precisely $\Z$.   In this case the interpretation $G \simeq \Delta(\Z)$  of $G$ in $\Z$ is on the set $\Z^{m+n}$ with the coordinate map $\mu_\Delta: \Z^{m+n} \to G$ given by 
 $$
 (\gamma_1, \ldots, \gamma_m, \nu(P_1), \ldots,\nu(P_n)) \to g = a_1^{\gamma_1} \ldots a_m^{\gamma_m} b_1^{P_1} \ldots b_n^{P_n}.
 $$

\begin{theorem} \label{th:5} Let $G = B \wr A$.Then the following holds:
\begin{enumerate} 
\item [1)] Let  $p=(a_1,\ldots ,a_m,b_1,\ldots ,b_n)$ in $G$, where $\{a_1,\ldots ,a_m\}$ and $\{b_1,\ldots ,b_n\}$ are bases of $A$ and $B$ respectively. Then the group $G$ and the ring $\Z$ are bi-interpretable with each other through the interpretations $G \simeq \Delta(\Z)$ and $\Z \simeq \Gamma(G,\bar p)$.
\item [2)] The group $G$ and the ring $\Z$ are regularly bi-interpretable with each other through the interpretations $G \simeq \Delta(\Z)$ and $\Z \simeq \Gamma(G,Base_G)$, where $Base_G$ is the formula from Theorem \ref{th:4} that defines the set of all bases in $G$.
\end{enumerate}
\end{theorem}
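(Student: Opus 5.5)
To prove Theorem~\ref{th:5} I will verify the two conditions of Definitions~\ref{de:bi-inter} and~\ref{de:regular-bi-int} directly. Mutual interpretability is already in hand: Section~\ref{se:3.1} gives $G\simeq\Delta(\Z)$, and the interpretation $\Z_{a_1}=\Gamma(G,\bar p)$ gives $\Z$ in $G$. What remains is to show that the two coordinate maps of the compositions are definable.

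\textbf{The map $\mu_{\Delta\circ\Gamma}$ in $G$.} The composite interpretation $(\Delta\circ\Gamma)(G,\bar p)$ lives on tuples
\[
(a_1^{\gamma_1},\ldots,a_1^{\gamma_m},a_1^{\nu(P_1)},\ldots,a_1^{\nu(P_n)})\in Z_{a_1}^{m+n},
\]
and its coordinate map sends such a tuple to $g=a_1^{\gamma_1}\cdots a_m^{\gamma_m}b_1^{P_1}\cdots b_n^{P_n}$. I need a formula $\theta(\bar y,x,\bar p)$ whose graph is exactly this map. This is precisely Corollary~\ref{co:9}, applied with top basis $\bar c=\bar a$ and $\bar u=\bar b$: the formula $\Lambda_G(x,\bar y,\bar z,\bar a,\bar b)$ defines the map. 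That corollary in turn rests on Theorem~\ref{th:2}, Theorem~\ref{th:3} and the $Val_G$ construction.

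\textbf{The map $\mu_{\Gamma\circ\Delta}$ in $\Z$.} Here $\Z$ is interpreted in $\Delta(\Z)$ on the subset $\{\mu_\Delta^{-1}(a_1^\gamma)\}$, that is, on the tuples $(\gamma,0,\ldots,0,\nu(0),\ldots,\nu(0))$. The coordinate map sends such a tuple to $\gamma$. Since the set is computable and the map is computable, both are arithmetically definable (Post's theorem, Section~\ref{se:2.2}). This settles part 1).

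\textbf{Regularity, part 2).} Now replace $\bar p$ by an arbitrary tuple $q=(\bar c,\bar u)$ satisfying $Base_G$ (Theorem~\ref{th:4}). Each such $q$ is the image of $\bar p$ under an automorphism $\phi$ (Definition~\ref{de:bases}). Applying $\phi$, the interpretation $\Gamma(G,q)$ is $\Z_{c_1}$ with its formula unchanged. Moreover $\Lambda_G(x,\bar y,\bar z,\bar c,\bar u)$ defines the coordinate map of $(\Delta\circ\Gamma)(G,q)$, which sends the tuple to $c_1^{\gamma_1}\cdots u_n^{P_n}$. This holds uniformly, since definability is preserved by automorphisms, and Corollary~\ref{co:9} is already stated for an arbitrary top basis.

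On the $\Z$ side no parameters occur: $\Delta$ is absolute, and $\Z$ has no nontrivial automorphisms that matter. The only point to check is that $\Gamma\circ\Delta$ uses, as parameters inside $\Delta(\Z)$, a tuple defined by $Base_G$ transported to $\Z$. The relevant tuples correspond to bases of $G$, expressed in coordinates with respect to $\bar p$. I therefore need the integer coordinate map of $(\Gamma\circ\Delta)(\Z,\psi^*)$ to be definable uniformly in the coordinates of the chosen basis $q$. My approach is to note that, given the $\bar p$-coordinates of $q$ and of an element $c_1^\gamma$, recovering $\gamma$ is a computable function of this finite integer data. Hence it is arithmetically definable by a single formula with the basis coordinates as free variables.

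\textbf{Main obstacle.} I expect this last uniformity step to be the main obstacle. It requires checking that the relation ``$x$ is the $\bar p$-coordinate tuple of $c_1^\gamma$, where $c_1$ has coordinates $\bar s$'' is computable; this uses the normal-form multiplication (\ref{eq:mult-norm-forms}). It also requires confirming that the equivalence relation from $\Delta$ is plain equality, so that no well-definedness issue arises.
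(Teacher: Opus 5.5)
Your proposal is correct and follows the paper's route: the coordinate map on the $G$ side is given by the formula $\Lambda_G$ of Corollary~\ref{co:9}, which is uniform over all top bases and so, together with Theorem~\ref{th:4}, yields part 2); the coordinate map on the $\Z$ side is handled by computability. Your explicit check that the $\Z$-side coordinate map is definable uniformly in the codes of the chosen basis is a detail the paper leaves implicit in ``2) follows from 1) by Theorem~\ref{th:4},'' and your argument for it, via computability of normal-form multiplication, is sound.
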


\begin{proof}
    To prove 1) we need to show that the coordinate maps  $\mu_{\Gamma \circ \Delta}$ and $\mu_{\Delta \circ \Gamma}$ are definable, correspondingly in $G$ (with parameters $\bar p$) and $\Z$. 

    We consider $\mu_{\Gamma \circ \Delta}$, the other one is easy.
    Note that
    $$
    \mu_{\Gamma \circ \Delta} : (a_1^{\gamma_1},\ldots, a_1^{\gamma_m}, a_1^{\nu(P_1)}, \ldots,a_1^{\nu(P_n)}) \to g = a_1^{\gamma_1} \ldots a_m^{\gamma_m} b_1^{P_1} \ldots b_n^{P_n}.
    $$
    By Corollary  \ref{co:9} the formula $\Lambda_G(x,\bar x, \bar y, \bar z, \bar t, \bar s)$ does precisely what we need here, see (\ref{eq:12}).
    
    2) follows from 1) by Theorem \ref{th:4}.
    
\end{proof}

\begin{cor}  The restricted wreath product $\mathbb Z^n\wr\mathbb Z^m$ is QFA.
\end{cor}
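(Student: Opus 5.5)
The plan is to derive the corollary from Theorem \ref{th:5}, part 2), together with the general result of \cite{KMS} quoted in the introduction: a finitely generated group that is regularly bi-interpretable with $\Z$ is rich and has an arithmetic multiplication table, and is therefore QFA. Since $\mathbb Z^n\wr\mathbb Z^m$ is $G=B\wr A$ with $B\simeq \Z^n$ and $A\simeq\Z^m$, Theorem \ref{th:5} 2) applies directly.

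To keep the argument self-contained, I would also sketch why regular bi-interpretability gives a single axiomatizing sentence.

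\textbf{Choosing a finite presentation.} Take the finite generating tuple $p=(a_1,\ldots,a_m,b_1,\ldots,b_n)$. The relevant facts are:
\begin{itemize}
\item $Base_G(p)$ holds;
\item the formula $\Lambda_G$ of Corollary \ref{co:9}, composed with the interpretation $\Delta$, expresses the coordinate map $\mu_{\Gamma\circ\Delta}$ uniformly for every basis $q$ satisfying $Base_G$.
\end{itemize}

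\textbf{Writing the sentence $\sigma$.} I would let $\sigma$ be the conjunction of the following clauses.
\begin{enumerate}
\item There exists a tuple $\bar z$ such that $Base_G(\bar z)$ holds and the interpretation $\Gamma(\cdot,\bar z)$ defines a model of a finite fragment of arithmetic. The fragment is the axioms of the nonnegative part of a discretely ordered ring, plus enough to make $\Delta$ applied to it a group.
\item The map $\theta$ built from $\Lambda_G$ is a bijective homomorphism from $(\Delta\circ\Gamma)(\cdot,\bar z)$ onto the whole group.
\end{enumerate}
All of this is first-order by Theorems \ref{th:4}, \ref{th:5} and Corollary \ref{co:9}.

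\textbf{Deducing isomorphism.} Let $H$ be finitely generated with $H\models\sigma$. Then $H\simeq\Delta(\widetilde{\Z})$, where $\widetilde{\Z}=\Gamma(H,\bar z)$. Write $h_1,\dots,h_k$ for generators of $H$. Their preimages in $\widetilde{\Z}$ give finitely many elements; the step is to show these are standard integers.

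\textbf{Main obstacle.} The hard point is exactly this standardness: why the finitely many coordinates of the generators lie in $\Z$ rather than in a nonstandard part of $\widetilde{\Z}$. The approach I would try first is the one from \cite{KMS}:
\begin{itemize}
\item Add to $\sigma$ the clause that every element is a product of the generators, expressed by ``every element has a coded word'' via the definable arithmetic.
\item Note that a word of nonstandard length is not a genuine product, so each element of $H$ is a standard word in $h_1,\dots,h_k$.
\item Use the clause that the coordinates of basis elements are the standard constants $0,1$ to force the multiplication table to be standard.
\end{itemize}
With all coordinates standard, $\Delta$ restricted to $\Z$ reproduces $G$, and so $H\simeq G$. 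In practice I would simply cite \cite{KMS} for this step rather than reprove it.
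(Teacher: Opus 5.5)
Your proposal follows the paper's route: the corollary is Theorem~\ref{th:5}~2) (regular bi-interpretability of the wreath product with $\Z$) combined with the result of \cite{KMS} that finitely generated groups regularly bi-interpretable with $\Z$ are QFA. Your optional self-contained sketch of the standardness step does not close that step as written, because the coded-word clause also holds internally in $\Delta(\widetilde{\Z})$ for nonstandard $\widetilde{\Z}$, and nothing in the sketch forces the coordinates of $h_1,\dots,h_k$ into $\Z$; but since you, like the paper, ultimately cite \cite{KMS} for that step, the argument stands.
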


\section{Condensed center-by-metabelian groups}

\subsection{The space of finitely generated marked groups.}
We begin by recalling some definitions from \cite{Gri}. For an abstract set $I$, we denote by $2^I$ the space of all subsets of $I$ endowed with the product topology (equivalently, the topology of pointwise convergence of the indicator functions). If $G$ is a group, we denote by $\mathcal N(G)$ the set of all normal subgroups of $G$ endowed with the subspace topology induced by the inclusion $\mathcal N(G) \subseteq 2^{G}$. Thus, the base of neighborhoods of $N\in \mathcal N(G)$ consists of the sets
\begin{equation}\label{U}
U(\mathcal F,N)=\{ M\le G \mid M\cap \mathcal F = N\cap \mathcal F\},
\end{equation}
where $\mathcal F$ ranges in the set of all finite subsets of $G$.  

Let $n\in \NN$. An \emph{$n$-generated marked group} (or simply a \emph{marked group}) is a pair $(G,A)$, where $G$ is a group and $(a_1, \ldots, a_n)\subseteq G^n$ is an $n$-tuple such that $G$ is generated by $\{a_1, \ldots, a_n\}$. Two $n$-generated marked groups  $(G,(a_1, \ldots, a_n))$ and $(H,(b_1, \ldots, b_n))$ are \emph{isomorphic} if the map sending $a_i$ to $b_i$ for all $i=1, \ldots, n$ extends to an isomorphism $G\to H$. The set of all isomorphism classes of $n$-generated marked groups is denoted by $\mathcal G_n$. Following the common practice, we abuse the notation and terminology by not distinguishing between marked groups and their isomorphism classes.  

For every $n\in \NN$, let $F_n$ be the free group of rank $n$ with a fixed basis $X=\{x_1, \ldots, x_n\}$. For every $(G, A)\in \G_n$, where $A=(a_1, \ldots, a_n)$, let $\e_{(G,A)}\colon F_n\to G$ be the homomorphism such that
$$
\e_{(G,A)}(x_i)=a_i \;\;\; \forall\, i=1, \ldots, n.
$$ 
The map $(G,A)\to \Ker \e_{(G,A)}$ is a well-defined bijection between $\G_n$ and $\mathcal N(F_n)$.  
The topology on $\G_n$ is defined as the pull-back topology under this map; that is, a sequence $\{(G_i, A_i)\}_{i\in \NN}$ converges to $(G,A)$ in $\G_n$ if and only if the sequence $\{ \Ker \e_{(G_i, A_i)}\}_{i\in \NN}$ converges to $\Ker \e_{(G,A)}$ in $\mathcal N(F_n)$.

\begin{example}
We have $\lim\limits_{n\to \infty} (\ZZ_n,\{ 1\})= (\ZZ,\{ 1\})$ in $\G_1$. 
\end{example}

It is easy to see that the map $(G, (a_1, \ldots, a_n))\mapsto (G, (a_1, \ldots, a_n, 1))$ defines a continuous embedding $\G_n \to \G_{n+1}$. 

\begin{definition}
The topological union $$\G=\bigcup_{n=1}^\infty \G_n$$ is called the \emph{space of finitely generated marked groups}. 
\end{definition}

By the definition of a topological union, a set $U\subseteq \G$ is open if and only if $U\cap \G_n$ is open in $\G_n$ for all $n\in \NN$. Alternatively, the topology on $\G$ can be as follows: a sequence $\{(G_i, A_i)\}_{i\in \NN}$ converges to a marked group $(G, A)\in \G_n$ if and only if there exists  $M \in \NN$ such that $(G_i, A_i)\in \G_n$ for all $i\ge M$, and the subsequence $\{(G_i, A_i)\}_{i=M}^\infty$ converges to $(G,A)$ in $\G_n$. 

Recall that a topological space is \emph{Polish} if it is separable and completely metrizable. For the result below, see \cite{Gri} and \cite[Proposition 3.7]{Osi21a}.

\begin{prop}
The space $\G$ is Polish.
\end{prop}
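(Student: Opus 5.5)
The plan is to realize $\G$ as a countable increasing union of the spaces $\G_n$. Each $\G_n$ is homeomorphic to $\mathcal N(F_n)$, and I will show that each $\mathcal N(F_n)$ is a compact metrizable space. From this I will deduce that the topological union is Polish. The argument follows Grigorchuk's original approach.

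The first step is to treat $\mathcal N(F_n)$. The free group $F_n$ is countable, so $2^{F_n}$ with the product topology is homeomorphic to the Cantor set. In particular it is compact and metrizable. Enumerating $F_n=\{w_1,w_2,\dots\}$, we may take the metric $d(M,M')=2^{-k}$, where $k$ is the least index with $w_k\in M\,\triangle\, M'$. The subset $\mathcal N(F_n)$ is closed in $2^{F_n}$. Indeed, being a normal subgroup means satisfying the conditions $1\in M$, and $x,y\in M\Rightarrow xy^{-1}\in M$, and $x\in M\Rightarrow g^{-1}xg\in M$. Each of these involves only finitely many coordinates, so each defines a clopen set, and an intersection of clopen sets is closed. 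Hence $\mathcal N(F_n)$ is compact metrizable, and therefore Polish. Since $\G_n$ carries the pull-back topology under the bijection with $\mathcal N(F_n)$, the space $\G_n$ is Polish as well.

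The second step is to check that the embedding $\iota_n\colon\G_n\to\G_{n+1}$ is a homeomorphism onto a closed subset. In terms of kernels, $\iota_n$ sends $K\trianglelefteq F_n$ to the normal closure of $K\cup\{x_{n+1}\}$ in $F_{n+1}$. Its image is $\{M\in\mathcal N(F_{n+1}) \mid x_{n+1}\in M\}$, which is a clopen condition. Continuity of $\iota_n$ is routine: membership of a word $w(x_1,\dots,x_{n+1})$ in the image kernel is equivalent to membership of $w(x_1,\dots,x_n,1)$ in $K$. Since $\G_n$ is compact and the target is Hausdorff, $\iota_n$ is automatically a closed embedding.

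The final step, which I expect to be the main obstacle, is to get a complete metric on the union. An increasing union of compact metrizable spaces, each closed and in fact clopen in the next, carries the direct-limit topology. Such a union need not be metrizable in general, but here every $\G_n$ is open in $\G$. This holds because the image of $\G_n$ in $\G_{n+1}$ is clopen, and also open in every $\G_k$ with $k\ge n$ by the same argument applied to $x_{n+1},\dots,x_k\in M$. Consequently $\G$ is the disjoint union of the clopen pieces $\G_n\setminus\G_{n-1}$, each of which is a clopen, hence compact, subset of a compact metrizable space.

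A countable disjoint topological sum of Polish spaces is Polish. To see this, put the metric $\min(d_n,1)$ on the $n$-th piece and distance $1$ between different pieces. This gives a complete separable metric, and it induces exactly the direct-limit topology because the pieces are open. This recovers the description of convergence stated before the proposition, and it proves that $\G$ is Polish.
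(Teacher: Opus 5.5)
Your proof is correct. The paper does not prove this proposition itself; it only cites \cite{Gri} and \cite[Proposition 3.7]{Osi21a}. So your argument is a genuinely self-contained alternative to a bare citation. It is the natural standard route, though I can't confirm it matches the cited proofs line by line.

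The structure of your argument is sound:
\begin{itemize}
\item $\mathcal N(F_n)$ is a closed subset of the Cantor space $2^{F_n}$, hence compact metrizable.
\item The image of $\G_n$ in $\G_k$ for $k\ge n$ is the clopen set $\{M\in\mathcal N(F_k) \mid x_{n+1},\dots,x_k\in M\}$, and the embeddings are homeomorphisms onto their images.
\item Therefore $\G$, with the direct-limit topology, is the countable topological sum of the compact metrizable pieces $\G_n\setminus\G_{n-1}$, which is Polish.
\end{itemize}

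Your proof also justifies two facts the paper states without proof just before the proposition. First, the map $\G_n\to\G_{n+1}$ is a (closed) topological embedding, via your identification of it with $K\mapsto\rho^{-1}(K)$. Second, it gives the sequential description of convergence in $\G$, which follows once each $\G_n$ is shown to be open in $\G$.

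The step you state most tersely is that your sum metric induces exactly the direct-limit topology. It does follow from what you proved: each piece is clopen in every $\G_k$ with $k\ge n$, and its subspace topology there agrees with the one inherited from $\G_n$.
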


We will need the following elementary yet useful observation.

\begin{lemma}[{\cite[Lemma 2.2]{MOW}}]\label{Lem:NG}
Let $G$ be a group generated by a finite set $X$. Given a normal subgroup $N\lhd G$, we let $X_N$ denote the image of $X$ under the natural homomorphism $G\to G/N$. The map $\mathcal N(G)\to \G$ defined by the formula $N\mapsto (G/N, X_N)$ for all $N\in \mathcal N(G)$ is continuous.
\end{lemma}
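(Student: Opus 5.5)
We must prove Lemma \ref{Lem:NG}: the map $N\mapsto (G/N,X_N)$ from $\mathcal N(G)$ to $\G$ is continuous. The plan is to factor the map through $\mathcal N(F_n)$, where $n=|X|$, and use the definition of the topology on $\G_n$ as the pull-back of the topology on $\mathcal N(F_n)$.

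Fix an enumeration $X=(x_1',\ldots,x_n')$ and let $\pi\colon F_n\to G$ be the epimorphism sending $x_i\mapsto x_i'$. For $N\in\mathcal N(G)$, the marked group $(G/N,X_N)$ lies in $\G_n$, and its kernel is $\Ker \e_{(G/N,X_N)}=\pi^{-1}(N)$. Since $\G_n$ carries the topology pulled back from $\mathcal N(F_n)$, and $\G_n$ embeds continuously into $\G$ (a set open in $\G$ meets $\G_n$ in an open set), it suffices to show that the map $\Phi\colon\mathcal N(G)\to\mathcal N(F_n)$, $N\mapsto \pi^{-1}(N)$, is continuous.

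For this we use the basic neighborhoods (\ref{U}). Take a finite set $\mathcal F\subseteq F_n$ and a normal subgroup $N\lhd G$, and put $\mathcal F'=\pi(\mathcal F)$, which is a finite subset of $G$. If $M\in U(\mathcal F',N)$, then for every $w\in\mathcal F$ we have $w\in\pi^{-1}(M)$ iff $\pi(w)\in M$ iff $\pi(w)\in N$ iff $w\in\pi^{-1}(N)$. Hence $\Phi(U(\mathcal F',N))\subseteq U(\mathcal F,\Phi(N))$, which is exactly continuity of $\Phi$ at $N$.

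There is no genuine obstacle. The only point needing care is the bookkeeping for the topological union. Continuity into $\G_n$ yields continuity into $\G$, because the inclusion $\G_n\hookrightarrow\G$ is continuous by definition of the union topology. We also need the marking to be well defined on isomorphism classes, which holds since $\Ker\e_{(G/N,X_N)}=\pi^{-1}(N)$ depends only on $N$.
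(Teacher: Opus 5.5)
Your proof is correct: identifying $\Ker\e_{(G/N,X_N)}$ with $\pi^{-1}(N)$ reduces the claim to continuity of $N\mapsto\pi^{-1}(N)$ from $\mathcal N(G)$ to $\mathcal N(F_n)$. Your check $\Phi(U(\pi(\mathcal F),N))\subseteq U(\mathcal F,\Phi(N))$ establishes this, and the inclusion $\G_n\hookrightarrow\G$ is continuous by the definition of the union topology. The paper gives no proof of its own and simply cites Lemma 2.2 of Minasyan, Osin, and Witzel; your argument (pulling back along the epimorphism $F_n\to G$ and comparing basic neighborhoods) is the standard one, so there is nothing to add.
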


\subsection{The main construction}
Throughout this section, we use the standard notation $x^y=y^{-1}xy$ and $[x,y]=x^{-1}y^{-1}xy$. We begin by constructing an auxiliary central extension of $\ZZ^2 \wr \ZZ$, which is reminiscent of the center-by-metabelian groups discussed in \cite{Hall} (see also \cite{Er}). 

Let 
\begin{equation}\label{Eq:A}
A=\{ a_i, b_i, c_i \mid i\in \ZZ\}
\end{equation}
and 
\begin{equation}\label{Eq:PresG}
G=\left\langle A, t \;\left|\; \begin{array}{c}
                              a_i^{t}=a_{i+1}, \; b_i^t=b_{i+1},\\
                              {[a_i, a_j]=[b_i,b_j]=1,\; [a_i,b_j]=c_{j-i},} \\ 
                              {[a_i, c_j]=[b_i,c_j]=[t, c_j]=c_j^2=1 }
                            \end{array}  \right.\right\rangle,
\end{equation}
where the relations are imposed for all $\forall\, i,j\in \ZZ$.

\begin{lemma}\label{Lem:1}
Let $G$ be the group defined above. 
\begin{enumerate}
\item[(a)] The center $Z(G)$ is a free group in the variety of abelian groups of exponent $2$, and $\{c_k \mid k\in \ZZ\}$ is a basis in $Z(G)$.
\item[(b)] $G/Z(G)\cong \ZZ^2\wr \ZZ$.
\end{enumerate}
\end{lemma}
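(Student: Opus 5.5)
The strategy is to compare $G$ with an explicit model $\widetilde G$, built as an extension $V\to\widetilde G\to\ZZ^2\wr\ZZ$, where $V=\bigoplus_{k\in\ZZ}\ZZ_2 e_k$.

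\textbf{The comparison map.} Write $\ZZ^2\wr\ZZ=(\bigoplus_i \ZZ\bar a_i\oplus\bigoplus_i\ZZ\bar b_i)\rtimes\langle \bar t\rangle$. Killing all $c_j$ turns every relation of (\ref{Eq:PresG}) into a relation of $\ZZ^2\wr\ZZ$. So there is an epimorphism $\pi\colon G\to \ZZ^2\wr\ZZ$ with $a_i\mapsto\bar a_i$, $b_i\mapsto \bar b_i$, $t\mapsto\bar t$, and its kernel $K$ is the normal closure of $\{c_j\}$. The relations say each $c_j$ is central of order $\le 2$, so $K=\langle c_j\mid j\in\ZZ\rangle$ is a central elementary abelian $2$-group, and $K\le Z(G)$. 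It remains to prove two things: the $c_j$ are linearly independent over $\ZZ_2$, and $Z(G)\subseteq K$. Together these give (a) and (b).

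\textbf{Independence.} To show independence I construct $\widetilde G$ concretely. Let $M$ be the group on the set $\ZZ^{(\ZZ)}\times\ZZ^{(\ZZ)}\times V$ with multiplication
\[
(x,y,v)(x',y',v')=\Bigl(x+x',\,y+y',\,v+v'+\sum_{i,j} y_j x'_i\, e_{j-i}\Bigr)\pmod 2 .
\]
This is a Heisenberg-type group built from the bilinear form $\beta(y,x')=\sum y_jx'_ie_{j-i}$, so associativity holds automatically. The index shift $x_i\mapsto x_{i+1}$, $y_j\mapsto y_{j+1}$, $e_k\mapsto e_k$ preserves $\beta$ because $j-i$ is unchanged. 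Hence it is an automorphism $\tau$ of $M$, and I set $\widetilde G=M\rtimes\langle\tau\rangle$.

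The sign and ordering conventions need checking. In $M$ one computes $[(\delta_i,0,0),(0,\delta_j,0)]=e_{j-i}$, possibly up to the order of the factors, which is irrelevant mod 2. Now send $a_i\mapsto(\delta_i,0,0)$, $b_j\mapsto(0,\delta_j,0)$, $c_k\mapsto e_k$ and $t\mapsto\tau$. All defining relations of (\ref{Eq:PresG}) hold in $\widetilde G$: the $a$'s commute with each other, the $b$'s commute with each other, the $e_k$ are central of order $2$ and fixed by $\tau$, and the action of $t$ is the shift. This gives a homomorphism $G\to\widetilde G$ sending the $c_k$ to the basis $e_k$ of $V$, so the $c_k$ are independent in $G$.

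\textbf{The center, which I expect to be the main obstacle.} Take $g\in Z(G)$. Then $\pi(g)$ is central in $\ZZ^2\wr\ZZ$. That group has trivial center by Lemma \ref{le:2}: nontrivial base elements are not centralized by $t$, and elements $t^k u$ with $k\ne 0$ do not commute with $b_0$. Hence $\pi(g)=1$ and $g\in K$. This uses only $\pi$, not $\widetilde G$.

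Combining: $Z(G)=K$ is free in the variety of exponent-$2$ abelian groups with basis $\{c_k\}$, which proves (a). Then $G/Z(G)=G/K\cong\ZZ^2\wr\ZZ$, which proves (b). The only genuinely delicate step is verifying that the cocycle formula above correctly realizes $[a_i,b_j]=c_{j-i}$ rather than $c_{i-j}$. If it gives $c_{i-j}$ instead, I would fix this by replacing $\beta$ with its transpose.
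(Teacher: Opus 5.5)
Your proof is correct. It differs from the paper's proof mainly in how you establish the linear independence of the $c_k$.

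The paper takes $N$ to be the free nilpotent group of class $2$ on $\{\tilde a_i,\tilde b_i\}$. It quotients by the central subgroup $M$ generated by $[\tilde a_i,\tilde a_j]$, $[\tilde b_i,\tilde b_j]$, $[\tilde a_i,\tilde b_j]^2$ and the shift-differences $[\tilde a_i,\tilde b_j][\tilde a_{i+\ell},\tilde b_{j+\ell}]$. It reads off independence of the $c_k$ from the classical fact that basic commutators freely generate $[N,N]$, and then identifies $G$ with the mapping torus $N/M\rtimes_\phi\ZZ$.

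You instead write down an explicit Heisenberg-type model from the shift-invariant bilinear form $\beta$. You only need a homomorphism $G\to\widetilde G$, which costs nothing beyond checking the defining relations. This avoids the basic-commutator theorem, the verification that $N/M$ has the stated presentation, and the mapping-torus identification. The price is slightly less structural information: you never show $G\cong\widetilde G$, though a collection argument would give it.

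The identification of the center is the same in both routes. $\langle c_k\rangle$ is central, and the quotient by it is $\ZZ^2\wr\ZZ$ via its standard presentation, which is centerless.

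Two small remarks. First, your worry about $c_{j-i}$ versus $c_{i-j}$ is unfounded. With $[x,y]=x^{-1}y^{-1}xy$ your cocycle gives $[(\delta_i,0,0),(0,\delta_j,0)]=(0,0,-\beta(\delta_j,\delta_i))=e_{j-i}$ exactly, so no transpose is needed. Second, the relations do not directly say that the $c_j$ commute with one another. This follows because $c_k=[a_0,b_k]$ lies in the subgroup generated by the $a_i$ and $b_i$, which every $c_j$ centralizes. You need this before asserting that $K=\langle c_j\rangle$ is central.
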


\begin{proof}
Let $N$ be the free nilpotent group of rank $2$ with basis $\{ \tilde a_i, \tilde b_i\mid i\in \ZZ\}$. It is well-known that $[N,N]=Z(N)$ is a free abelian group freely generated by the basic commutators $[\tilde a_i, \tilde a_j]$, $[\tilde b_i, \tilde b_j]$ for $i<j$, $i,j\in \ZZ$, and $[\tilde a_i, \tilde b_j]$ for $i,j\in \ZZ$ (see \cite[Ch. 3, Sec. 1]{Neu} and references therein). Let $M\le [N,N]$ be the subgroup generated by the elements
$$
[\tilde a_i, \tilde a_j], \;\; [\tilde b_i, \tilde b_j],\;\; [\tilde a_i, \tilde b_j]^2,\;\; [\tilde a_i, \tilde b_{j}] [\tilde a_{i+\ell}, \tilde b_{j+\ell}]
$$
for all $i,j, \ell\in \ZZ$. It is straightforward to verify that $[N,N]/M$ is a free group in the variety of abelian groups of exponent $2$, freely generated by elements $c_k=[\tilde a_0, \tilde b_k]M$, $k\in \ZZ$. 

Since $M$ is central in $N$, we can consider the quotient group $N/M$. We denote the images of $\tilde a_i$ and $\tilde b_i$ ($i,j\in \ZZ$) in $N/M$ by $a_i$ and $b_j$, respectively. Our construction yields the presentation
$$
N/M = \left\langle A \left|\; [a_i, a_j]=[b_i,b_j]=1,\; [a_i,b_j]=c_{j-i},\; [a_i, c_j]=[b_i,c_j]=c_j^2=1 \right.\right\rangle,
$$
where $A$ is given by (\ref{Eq:A}) and the indices $i$, $j$ independently range in $\ZZ$. 

The map $\tilde a_i\mapsto \tilde a_{i+1}$, $\tilde b_i\mapsto \tilde b_{i+1}$ ($i\in \ZZ$) extends to an automorphism $N\to N$, which fixes $M$ setwise. Therefore, it induces an automorphism  $\phi\in \Aut(N/M)$ such that $\phi(a_i)=a_{i+1}$ and $\phi(b_i)=b_{i+1}$ for all $i\in \ZZ$. Note also that for every $i\in \ZZ$, we have
$$
\phi (c_i)= \phi ([\tilde a_0, \tilde b_i]M)= [\tilde a_1, \tilde b_{i+1}]M =[\tilde a_0, \tilde b_i]M = c_i.
$$ 
 Hence, the group $G$ given by (\ref{Eq:PresG}) is naturally isomorphic to the mapping torus $N/M\rtimes_\phi \ZZ$. In particular, $N/M$ naturally embeds in $G$. 

The subgroup $[N,N]/M=\langle \{ c_i \mid i\in \ZZ\}\rangle \le G$ is central in $G$ as all generators of $G$ commute with $c_i$ for all $i\in \ZZ$. The quotient group $G/([N,N]/M)$ has the presentation 
$$
\langle t, a_i, b_i \mid  [a_i, a_j]=[b_i,b_j]=[a_i,b_j]=1,\; a_i^{t}=a_{i+1}, \; b_i^t=b_{i+1}\rangle,
$$
which is one of the standard presentations of the wreath product $\ZZ^2 \wr \ZZ$. Since the latter group is centerless, we have $Z(G)=[N,N]/M$, and conditions (a) and (b) follow.
\end{proof}

For every subset $S\subseteq \ZZ$, we define a subgroup $Z_S\le Z(G)$ by the formula
$$
Z_S=\Big\langle \{c_i\mid i\in \ZZ\setminus S\}\cup \{c_jc_k, \mid, \; j,k\in S\}\Big\rangle 
$$ 
and let
$$
G_S=G/Z_S.
$$
For instance, we have $G_\emptyset \cong \ZZ^2\wr \ZZ$. 

\begin{lemma}\label{Lem:2}
For every non-empty subset $S\subseteq \ZZ$, the group $G_S$ is a central extension of the form 
$$
1\longrightarrow \ZZ_2 \longrightarrow G_S \longrightarrow \ZZ^2\wr \ZZ \longrightarrow 1,
$$
where the kernel of the extension is generated by $c_i$ for any $i\in S$.
\end{lemma}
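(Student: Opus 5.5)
We use Lemma \ref{Lem:1}: $Z(G)$ is the elementary abelian $2$-group with basis $\{c_k\mid k\in\ZZ\}$, and $G/Z(G)\cong\ZZ^2\wr\ZZ$.

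First we show that $Z(G)/Z_S\cong\ZZ_2$, generated by the image of $c_i$ for any $i\in S$. Fix $i_0\in S$. The map $Z(G)\to\ZZ_2$ given by $c_k\mapsto 0$ for $k\notin S$ and $c_k\mapsto 1$ for $k\in S$ extends to a homomorphism because $c_k$ is a free basis of $Z(G)$ in the variety of elementary abelian $2$-groups. This homomorphism is surjective since $S\neq\emptyset$. Its kernel contains every generator of $Z_S$, since $c_jc_k\mapsto 1+1=0$ for $j,k\in S$. Conversely, let $z=\prod_{k\in F}c_k$ lie in the kernel, with $F$ finite. Then $|F\cap S|$ is even, so the $c_k$ with $k\in F\cap S$ can be grouped into pairs $c_jc_k$, and each $c_k$ with $k\in F\setminus S$ is already a generator of $Z_S$. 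Hence $z\in Z_S$, so the kernel equals $Z_S$ and $Z(G)/Z_S\cong\ZZ_2$. Moreover, for each $i\in S$ the image of $c_i$ is the nontrivial element, so it generates.

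Next, $Z_S\le Z(G)$ is central and therefore normal in $G$, so $G_S=G/Z_S$ is defined. By the third isomorphism theorem,
$$G_S/(Z(G)/Z_S)\cong G/Z(G)\cong \ZZ^2\wr\ZZ .$$
The subgroup $Z(G)/Z_S$ is the image of a central subgroup, hence central in $G_S$. This gives the required central extension
$$1\to\ZZ_2\to G_S\to\ZZ^2\wr\ZZ\to 1,$$
with kernel generated by $c_iZ_S$ for any $i\in S$.

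The only nontrivial step is the identification of the kernel with $Z_S$, which is the parity argument above; everything else follows directly from Lemma \ref{Lem:1}.
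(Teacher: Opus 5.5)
Your proposal is correct and follows the same route as the paper: use Lemma~\ref{Lem:1}(a) to get $Z(G)/Z_S\cong\ZZ_2$, then Lemma~\ref{Lem:1}(b) together with centrality to obtain the extension. The paper simply asserts $Z(G)/Z_S\cong\ZZ_2$ ``by the definition of $Z_S$,'' and your parity homomorphism with kernel exactly $Z_S$ is a clean way to fill in that step.
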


\begin{proof}
Part (a) of Lemma \ref{Lem:1} and the definition of $Z_S$ imply that the image of $Z(G)$ in $G_S$ is isomorphic to $Z(G)/Z_S\cong \ZZ_2$. The result now follows from part (b) of Lemma \ref{Lem:1}. 
\end{proof}

We consider the \emph{shift action} of $\ZZ$ on $2^\ZZ$ by homeomorphisms defined by 
$$
n\circ S =\{ s-n\mid s\in S\} 
$$
for all $n\in \ZZ$ and all $S\in 2^\ZZ$. 

\begin{lemma}\label{Lem:3}
For every $S\subseteq \ZZ$ and every $n\in \ZZ$, the group $G_S$ is generated by the elements $a=a_0$, $b_n$, and $t$; in these generators, it has the presentation 
\begin{equation}\label{Eq:PresGS}
G_S=\left\langle a,b_n, t \;\left|\; \begin{array}{c}
                              {[a, a^{t^i}]=[b_n,b_n^{t^i}]=1\;\; \forall\, i\in \ZZ}\\ 
                              {[a,[a, b_n^{t^{i}}]]=[b_n,[a, b_n^{t^{i}}]]=[t,[a, b_n^{t^{i}}]]=[a, b_n^{t^{i}}]^2=1\;\; \forall \, i\in \ZZ}\\
                              {[a, b_n^{t^{j}}]= 1\;\; \forall\, j\in n\circ(\ZZ\setminus S),\;\; [a, b_n^{t^{k}}]=[a, b_n^{t^{\ell}}]\;\; \forall \, k,\ell\in n\circ S }
                            \end{array}  \right.\right\rangle.
\end{equation}
\end{lemma}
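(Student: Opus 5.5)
Write $H$ for the group given by presentation (\ref{Eq:PresGS}); the plan is to construct mutually inverse homomorphisms between $H$ and $G_S$.

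\textbf{Generation.} In $G_S$ we have $a_i=a^{t^i}$ and $b_j=b_n^{t^{j-n}}$, since $t$ shifts indices. Hence $a$, $b_n$, $t$ generate the images of all $a_i$ and $b_j$. They also generate the $c_k$, because $c_{j-i}=[a_i,b_j]$.

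\textbf{The map $H\to G_S$.} Sending $a\mapsto a_0$, $b_n\mapsto b_n$, $t\mapsto t$ gives an epimorphism $\pi\colon H\to G_S$. To see that the relations of (\ref{Eq:PresGS}) hold in $G_S$, note that $[a,b_n^{t^i}]$ maps to $[a_0,b_{n+i}]=c_{n+i}$. This element is central of order dividing $2$. It is trivial when $n+i\notin S$, that is, when $i\in n\circ(\ZZ\setminus S)$. For $k,\ell\in n\circ S$ we get $c_{n+k}c_{n+\ell}\in Z_S$, so $[a,b_n^{t^k}]$ and $[a,b_n^{t^\ell}]$ have the same image. The remaining relations hold already in $G$ by (\ref{Eq:PresG}).

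\textbf{The inverse map $G_S\to H$.} Define $\sigma$ on the generators of $G_S$ by $a_i\mapsto a^{t^i}$, $b_j\mapsto b_n^{t^{j-n}}$, $c_k\mapsto [a,b_n^{t^{k-n}}]$, $t\mapsto t$. We must check that every relation of (\ref{Eq:PresG}) and every generator of $Z_S$ is sent to $1$ in $H$. Set $d_k=[a,b_n^{t^{k-n}}]$. By the second row of (\ref{Eq:PresGS}), each $d_k$ commutes with $a$, $b_n$, $t$, so it is central in $H$ and $d_k^2=1$. The relations $c_k=1$ for $k\notin S$ and $c_jc_k=1$ for $j,k\in S$ are then exactly the third row of (\ref{Eq:PresGS}), using $d_j^2=1$. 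The relations $a_i^t=a_{i+1}$ and $b_i^t=b_{i+1}$ are immediate.

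\textbf{Main obstacle.} The remaining relations are $[a_i,a_j]=1$, $[b_i,b_j]=1$ and $[a_i,b_j]=c_{j-i}$, and they require shift invariance inside $H$. For the first two, conjugating $[a,a^{t^{j-i}}]=1$ by $t^i$ gives $[a^{t^i},a^{t^j}]=1$; the same works for $b$. For the third, conjugation by $t^i$ gives
$$[a^{t^i},b_n^{t^{j-n}}]=[a,b_n^{t^{j-i-n}}]^{t^i}.$$
Since $[a,b_n^{t^{j-i-n}}]=d_{j-i}$ is central in $H$, this equals $d_{j-i}$, which is the image of $c_{j-i}$. So centrality of the $d_k$, including commuting with $t$, is the key point, and it is the reason the relation $[t,[a,b_n^{t^i}]]=1$ appears in the presentation.

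\textbf{Conclusion.} Both $\sigma\circ\pi$ and $\pi\circ\sigma$ are the identity on generators, so $H\cong G_S$ via these maps. This gives the claimed generating set and presentation.
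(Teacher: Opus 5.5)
Your proof is correct and takes essentially the same route as the paper. The paper applies Tietze transformations: it uses conjugation by $t$ to reduce the relations of (\ref{Eq:PresG}) to $i=0$, adds the generators of $Z_S$ as relators, and eliminates $a_i=a^{t^i}$, $b_i=b_n^{t^{i-n}}$, $c_i=[a,b_n^{t^{i-n}}]$. You make the same substitutions, packaged as the explicit mutually inverse homomorphisms $\pi$ and $\sigma$, and in doing so you spell out a step the paper leaves implicit: the relations $[a_i,b_j]=c_{j-i}$ for $i\neq 0$ are redundant because $[a,b_n^{t^i}]$ is central, in particular because it commutes with $t$.
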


\begin{proof}
To obtain the required presentation of $G_S$, we proceed as follows. We first note that it suffices to impose the relations in the second and third rows of (\ref{Eq:PresG}) for $i=0$ only; all other relations are redundant and we omit them from the presentation of $G$. Further, we obtain a presentation of $G_S$ by adding the relations $c_i=1$ and $c_j=c_k$ for all $i \in \ZZ\setminus S$ and $j,k\in S$. We rename $a_0$ by $a$ and exclude all redundant generators using the relations $a_i=a^{t^i}$, $b_i=b_n^{t^{i-n}}$, and $c_i=[a, b_n^{t^{i-n}}]$. In particular, relations of the form $c_i=1$  and $c_j=c_k$ become $[a, b_n^{t^{i-n}}]=1$ and $[a, b_n^{t^{j-n}}]=[a, b_n^{t^{k-n}}]$, respectively, where $i \in \ZZ\setminus S$ and  $j,k\in S$. We replace the exponent $i-n$ with $i \in n\circ(\ZZ\setminus S)$ and $k-n$ with $k\in n\circ S$. Finally, removing the redundant relations occurring from the first row of relations in (\ref{Eq:PresG}), we obtain (\ref{Eq:PresGS}).
\end{proof}

\begin{cor}\label{Cor:iso}
For every $S\subseteq \ZZ$ and every $n\in \ZZ$, we have $G_S\cong G_{n\circ S}$. 
\end{cor}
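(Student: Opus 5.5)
The plan is to compare the two presentations that Lemma \ref{Lem:3} provides, one for $G_S$ and one for $G_{n\circ S}$, and to see that they coincide after renaming generators. Lemma \ref{Lem:3} holds for every subset and every integer, so we may apply it to $G_S$ with the integer $n$ and to $G_{n\circ S}$ with the integer $0$.

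\emph{Step 1.} Apply Lemma \ref{Lem:3} to $G_S$ with the integer $n$. This gives a presentation on the generators $a, b_n, t$. The relators in the third row are indexed by the sets $n\circ(\ZZ\setminus S)$ and $n\circ S$.

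\emph{Step 2.} Apply Lemma \ref{Lem:3} to $G_{n\circ S}$ with the integer $0$. This gives a presentation on the generators $a, b_0, t$. The third-row relators are now indexed by $0\circ(\ZZ\setminus (n\circ S))=\ZZ\setminus(n\circ S)$ and by $n\circ S$.

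\emph{Step 3.} Check that the index sets agree. The shift $s\mapsto s-n$ is a bijection of $\ZZ$, so $n\circ(\ZZ\setminus S)=\ZZ\setminus(n\circ S)$. The first two rows of relators in (\ref{Eq:PresGS}) do not depend on $S$ or $n$ at all.

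\emph{Step 4.} Conclude. The two presentations in Steps 1 and 2 become literally identical under the renaming $b_n\leftrightarrow b_0$, with $a$ and $t$ unchanged. Hence the assignment $a\mapsto a$, $b_n\mapsto b_0$, $t\mapsto t$ extends to an isomorphism $G_S\to G_{n\circ S}$.

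There is no real obstacle here. The only point needing care is the bookkeeping in Step 3, that is, confirming that complement commutes with the shift so that the relator sets match exactly.
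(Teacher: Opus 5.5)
Your proof is correct and follows the paper's argument: the paper also reads the isomorphism directly off the two presentations from Lemma \ref{Lem:3}, written as $a\mapsto a$, $b_0\mapsto b_n$, $t\mapsto t$ from $G_{n\circ S}$ to $G_S$, which is the inverse of your map. Your Step 3, the identity $n\circ(\ZZ\setminus S)=\ZZ\setminus(n\circ S)$, is exactly the bookkeeping the paper leaves implicit.
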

\begin{proof}
By Lemma \ref{Lem:3}, the required isomorphism is given by the map $a\mapsto a$, $b_0\mapsto b_n$, $t\mapsto t$.
\end{proof}

\begin{lemma}\label{Lem:4}
The map $f\colon 2^\ZZ \to \G$ given by $f(S)=(G_S, \{a,b_0,t\})$ is injective and continuous.
\end{lemma}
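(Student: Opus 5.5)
Continuity will follow from Lemma \ref{Lem:NG} applied to the group $G$ from (\ref{Eq:PresG}) with the finite generating set $X=\{a_0,b_0,t\}$. Indeed, $f$ factors as
$$
2^\ZZ\to \mathcal N(G)\to \G,\qquad S\mapsto Z_S\mapsto (G/Z_S, X_{Z_S}) = (G_S,\{a,b_0,t\}).
$$
By Lemma \ref{Lem:NG}, the second map is continuous, so it suffices to show that $S\mapsto Z_S$ is continuous from $2^\ZZ$ to $\mathcal N(G)\subseteq 2^G$. Equivalently, for each fixed $g\in G$, the condition $g\in Z_S$ must depend only on finitely many coordinates of $S$.

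If $g\notin Z(G)$, then $g\notin Z_S$ for all $S$, which is a constant condition. If $g\in Z(G)$, then by Lemma \ref{Lem:1}(a) we can write $g=c_{i_1}\cdots c_{i_r}$ with distinct indices $i_1,\dots,i_r$, uniquely. Since $Z(G)$ is an $\mathbb F_2$-vector space with basis $\{c_k\}$, the subgroup $Z_S$ is the kernel of the linear functional $\chi_S\colon Z(G)\to \ZZ_2$ sending $c_k\mapsto 1$ if $k\in S$ and $c_k\mapsto 0$ otherwise. To check this, note that the generators $c_i$ ($i\notin S$) and $c_jc_k$ ($j,k\in S$) lie in $\ker\chi_S$, and they span a subspace of codimension at most $1$. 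When $S\neq\emptyset$, Lemma \ref{Lem:2} shows the codimension is exactly $1$; when $S=\emptyset$, the generators span everything and $\chi_\emptyset=0$. Hence $g\in Z_S$ if and only if $|\{i_1,\dots,i_r\}\cap S|$ is even, which depends only on whether $i_1,\dots,i_r$ belong to $S$. So $\{S : g\in Z_S\}$ is clopen in $2^\ZZ$, and this gives continuity.

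For injectivity, the natural test elements are the commutators $[a,b_0^{t^k}]$, which equal the images of $c_k$ in $G_S$; this is exactly the rewriting used in Lemma \ref{Lem:3}. By the description above, $c_k\in Z_S$ if and only if $k\notin S$. Therefore the word $w_k=[x_1,x_3^{-1}x_2x_3^{\,}]$ (with $x_3^{k}$ in place of $x_3$, i.e. $[x_1, x_3^{-k}x_2x_3^{k}]$) lies in $\Ker\e_{(G_S,\{a,b_0,t\})}$ if and only if $k\notin S$. Consequently $S$ can be recovered from the marked group: if $S\neq S'$, choose $k$ in the symmetric difference; then $w_k$ is trivial in exactly one of the two marked groups, so they are distinct points of $\G_3$.

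The only real obstacle is identifying $Z_S$ as $\ker\chi_S$, since the membership parity criterion depends on it. Everything else is a direct consequence of Lemma \ref{Lem:1}(a) and Lemma \ref{Lem:NG}.
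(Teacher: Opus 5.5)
Your proof is correct and follows the paper's argument: injectivity comes from the test element $[a,b_0^{t^k}]=c_k$, which is trivial in exactly one of the two groups, and continuity comes from factoring $f$ through $S\mapsto Z_S\in\mathcal N(G)$ and applying Lemma~\ref{Lem:NG}. The only difference is in showing that $S\mapsto Z_S$ is continuous. You identify $Z_S$ with the kernel of the parity functional $\chi_S$, so that $g\in Z_S$ if and only if $|\mathcal K(g)\cap S|$ is even; this is a slightly cleaner substitute for the paper's minimal-length decomposition argument.
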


\begin{proof}
To prove injectivity, let $S$ and $T$ be two distinct subsets of $\ZZ$. Without loss of generality, we can assume that there exists $i\in S\setminus T$. By Lemma \ref{Lem:3}, the relation $[a, b_0^{t^{i}}]=1$ holds in $G_T$. On the other hand, the element $[a, b_0^{t^{i}}]=c_i$ generates $Z(G_S)\cong \ZZ_2$ by Lemma~\ref{Lem:2}. In particular,  $[a, b_0^{t^{i}}]\ne 1$ in $G_S$. By the definition of marked groups, this implies that $(G_S, \{a,b_0,t\})$ and  $(G_T, \{a,b_0,t\})$ represent distinct elements of $\G$.

We now show that the map $f$ is continuous. To this end, we represent it as the composition of maps
$$
2^\ZZ\stackrel{\alpha}\to \mathcal N(G)\stackrel{\beta}\to \G,
$$
where $\alpha (S)=Z_S\lhd G$ for all $S\subseteq \ZZ$, and $\beta (N)=(G/N, \{a_0N, b_0N,tN\})$ for all $N\lhd G$.  Observe that $G$ is generated by $a_0$, $b_0$, and $t$; hence, the map $\beta$ is well-defined and continuous by Lemma~\ref{Lem:NG}. 

It remains to show that the map $\alpha $ is continuous. By the definition of the topologies on $2^\ZZ$ and $\mathcal N(G)$, this amounts to showing that for every finite subset $\mathcal F\subseteq  G$, there exists a finite subset $\mathcal E\subseteq \ZZ$ such that for every $S,T\in 2^\ZZ$ satisfying 
\begin{equation}\label{Eq:ST}
S\cap \mathcal E=T\cap \mathcal E, 
\end{equation}
we have $Z_S\cap \mathcal F=Z_T\cap \mathcal F$. We fix a finite subset $\mathcal F\subseteq  G$. By part (a) of Lemma \ref{Lem:1}, every element  $z\in \mathcal F\cap Z(G)$ can be uniquely decomposed as 
$$
z=\prod_{i\in \mathcal K(z)} c_i
$$
for some finite subset $\mathcal K(z)\subseteq \ZZ$. We let 
$$
\mathcal E = \bigcup_{z\in \mathcal F\cap Z(G)} \mathcal K(z).
$$

Suppose that $S$ and $T$ are subsets of $\ZZ$ satisfying (\ref{Eq:ST}). By the definition of $Z_S$, every element $z\in Z_S$ can be decomposed as 
\begin{equation}\label{Eq:zProd}
z=\left(\prod_{i\in \mathcal P} c_i \right)\cdot \left(\prod_{(j,k)\in \mathcal R} c_jc_k\right)
\end{equation}
for some finite subsets $\mathcal P\subseteq\ZZ\setminus S$ and $\mathcal R\subseteq S\times S$. We take such a decomposition with the minimal possible number of factors, which ensures that every $c_i$ appears in the decomposition at most once (for example, if the second product contains $c_jc_k$ and $c_jc_\ell$ for some $j, k, l\in S$, we can use the equality $c_jc_kc_jc_\ell=c_kc_\ell$ to reduce the number of factors). Since $\{c_i\mid i\in \ZZ\}$ is a basis in $Z(G)$, an element $c_i$ occurs in (\ref{Eq:zProd}) if and only if $i\in \mathcal K(z)\subseteq \mathcal E$. Combining this with (\ref{Eq:ST}), we obtain that $\mathcal P\subseteq\ZZ\setminus T$ and $\mathcal R\subseteq T\times T$. Consequently, $z\in Z_T$. This proves the inclusion $Z_S\cap \mathcal F \le Z_T\cap \mathcal F$. The proof of the opposite inclusion is symmetric, so we obtain $Z_S\cap \mathcal F = Z_T\cap \mathcal F$, as required. This concludes the proof of the continuity of $\alpha$ and the lemma. 
\end{proof}

Recall that the action of a group $H$ by homeomorphisms on a topological space $X$ is said to be \emph{topologically transitive} if, for any non-empty open subsets $U, V\subseteq X$, there exists $g\in G$ such that $gU\cap V\ne \emptyset$. If $X$ is Polish, this condition is equivalent to the existence of a dense orbit.  

\begin{proof}[Proof of Theorem \ref{main}]
It is well-known and straightforward to prove that the shift action of $\ZZ$ on $2^\ZZ$ is topologically transitive. Thus, there exists $S\in 2^\ZZ$ with a dense orbit. In particular, the orbit of $S$ has no isolated points. By Lemma \ref{Lem:4} and Corollary \ref{Cor:iso}, the image of the orbit of $S$ under the map $f$ has no isolated points and belongs to the isomorphism class $[G_S]\subseteq \G$. It follows that $[G_S]$ is non-discrete. Since the isomorphism class of every finitely generated group in $\G$ is either discrete or has no isolated points (see \cite[Corollary 6.1]{Osi21a}), we obtain that $G_S$ is condensed. 
\end{proof}

\subsection{Elementarily equivalent center-by-metabelian groups.}

We are now ready to prove the corollaries announced in the introduction. 

\begin{proof}[Proof of Corollary \ref{Cor:non-smooth}] 
This is an immediate consequence of Theorem \ref{main} and \cite[Proposition 2.7]{Osi21a} applied to the subset of $\G$ consisting of all finitely generated, center-by-metabelian marked groups. Indeed, the aforementioned proposition states that an isomorphism-invariant closed subset $S$ of $\G$ contains a condensed group if and only if the isomorphism relation on $S$ is not smooth.
\end{proof}

\begin{proof}[Proof of Corollary \ref{Cor:Non-Geom}]
    As observed in \cite[Proposition 6.2]{Osi21a}, finitely generated abelian-by-nilpotent (in particular, finitely generated metabelian) groups cannot be condensed. Thus, $\ZZ^2 \wr \ZZ$ is not condensed. Combining this with Theorem \ref{main}, we obtain the desired result. 
\end{proof}

The proof of Corollary \ref{Cor:EE} follows the same idea as that of Proposition 2.3 in \cite{Osi21a} and involves proving a zero-one law for the set $f(2^\ZZ)=\{ (G_S, \{a,b_0, t\})\mid S\subseteq \ZZ\}$ analogous to Theorem 2.2 in \cite{Osi21a}. Unfortunately, we cannot apply the latter theorem in our settings as $f(2^\ZZ)$ is not isomorphism-invariant in the terminology of \cite{Osi21a}. To address this problem, we first prove the proposition below, which seems to be of independent interest.

\begin{definition}
Let $H$ be a group acting on the space $\G$. We say that the action is \emph{isomorphism-preserving} if, for every marked group $(K,X)\in \G$, the $H$-orbit of $(K,X)$ belongs to the isomorphism class $[K]$.
\end{definition}

Recall that a subset of a topological space is \emph{perfect} if it is closed and has no isolated points.

\begin{prop}\label{Prop:EE}
Let $\mathcal C$ be a perfect subspace of $\G$. Suppose that $Homeo(\mathcal C)$ contains an isomorphism-preserving, topologically transitive subgroup. Then $\mathcal C$ contains a subset $\mathcal C_0$ of cardinality $|\mathcal C_0|=2^{\aleph_0}$ such that, for any $(K,X), (L,Y)\in \mathcal C_0$, we have $K\equiv L$.
\end{prop}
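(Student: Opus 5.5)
The plan is a zero-one law for first-order sentences on $\mathcal C$, followed by a Baire-category argument. Fix a topologically transitive, isomorphism-preserving subgroup $H\le Homeo(\mathcal C)$. For a sentence $\sigma$ of group theory, put
$$
\mathcal C_\sigma=\{(K,X)\in \mathcal C \mid K\models \sigma\}.
$$
Two facts about these sets are needed. First, $\mathcal C_\sigma$ is $H$-invariant: $H$ preserves isomorphism classes, and elementary properties are isomorphism invariants. Second, $\mathcal C_\sigma$ has the Baire property in $\mathcal C$. For this I would show that, for every first-order formula $\phi(x_1,\ldots,x_k)$ and words $w_1,\ldots,w_k$ in the marking generators, the set $\{(K,X)\mid K\models\phi(w_1(X),\ldots,w_k(X))\}$ is Borel in $\G$. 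The induction is on $\phi$: atomic formulas give clopen sets, Boolean connectives preserve Borel sets, and a quantifier $\exists y$ becomes a countable union over words $w$ substituted for $y$. The last step is valid because every element of a finitely generated marked group is represented by some word in the generators. So each $\mathcal C_\sigma$ is Borel, hence has the Baire property.

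Next, $\mathcal C$ is closed in the Polish space $\G$, so it is Polish and in particular a Baire space. Suppose an $H$-invariant set $B$ with the Baire property is non-meager. Then $B$ is comeager in some non-empty open set $U$. Topological transitivity gives $hU\cap V\neq\emptyset$ for every non-empty open $V$, and $hB=B$. Hence $B$ is comeager in a dense union of open sets, and so $B$ is comeager in $\mathcal C$. This gives the zero-one law: each $\mathcal C_\sigma$ is either meager or comeager, and exactly one of $\mathcal C_\sigma$ and $\mathcal C_{\neg\sigma}$ is comeager.

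Let $T$ be the set of sentences $\sigma$ with $\mathcal C_\sigma$ comeager. The language is countable, so $\mathcal C_T=\bigcap_{\sigma\in T}\mathcal C_\sigma$ is a countable intersection of comeager sets and is therefore comeager. Any two groups in $\mathcal C_T$ satisfy exactly the sentences of $T$, and hence are elementarily equivalent.

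The remaining task is cardinality. $\mathcal C_T$ is a comeager Borel set, so it contains a dense $G_\delta$ subset $D$ of $\mathcal C$. Since $\mathcal C$ is perfect and Polish, $D$ is a dense $G_\delta$ in a perfect Polish space, and it has no isolated points. A non-empty Polish space with no isolated points contains a Cantor set, so $|D|=2^{\aleph_0}$. Take $\mathcal C_0=D$.

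I expect the main obstacle to be the Borel-measurability of $\mathcal C_\sigma$, specifically the quantifier step. The concern is whether quantification ranges only over the marked group; since it does, replacing quantifiers by countable unions and intersections over words is legitimate.
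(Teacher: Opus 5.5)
Your proof is correct and follows the paper's argument: an $H$-invariant zero-one law for the sets $\ModS(\sigma)$, completeness of the generic theory via the Baire category theorem, and a dense $G_\delta$ inside the comeager set to obtain cardinality $2^{\aleph_0}$. The only difference is that you prove inline the two facts the paper cites, namely Borelness of $\ModS(\sigma)$ (\cite[Proposition 5.1]{Osi21a}) and the zero-one law for topologically transitive actions (\cite[Theorem 8.46]{Kec}), and your arguments for both are the standard ones; you should also state explicitly that $D$ is Polish, being a $G_\delta$ in a Polish space, before you apply the Cantor-set theorem.
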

\begin{proof}
The proof of the proposition repeats the main steps of the proofs of Theorem 2.2 and Proposition 2.3 in \cite{Osi21a}, so we keep it brief. 

For every first-order sentence $\sigma $ in the language of groups $\mathcal L$, let 
$$
\ModS(\sigma)=\{ (K,X)\in \mathcal C\mid K\models \sigma\}.
$$
By \cite[Proposition 5.1]{Osi21a}, $\ModS(\sigma)$ is always a Borel subset of $\mathcal C$. Since $\mathcal C$ is a closed subspace of the Polish space $\G$, it is a Polish space itself. Let $H\le Homeo(\mathcal C)$ be an isomorphism-preserving, topologically transitive subgroup. By the zero-one law for topologically transitive group actions on Polish spaces (see \cite[Theorem 8.46]{Kec}), every $H$-invariant Borel subset of $\mathcal C$ is either meager or comeager. In particular, $\ModS(\sigma )$ is either meager or comeager for every sentence $\sigma $ in $\mathcal L$. 

Let $Th^{gen}(\mathcal C)$ denote the set of all sentences $\sigma $ in $\mathcal L$ such that $\ModS(\sigma )$ is comeager in $\mathcal C$. Since the intersection of a countable collection of comeager sets is comeager, there is a comeager set $\mathcal C_0\subseteq \mathcal C$ such that, for every $(K,X)\in \mathcal C_0$, the group $K$ is a model of $Th^{gen}(\mathcal C)$. On the other hand, for every sentence $\sigma $, we have $\ModS(\sigma)\cup \ModS(\lnot\sigma)=\mathcal C$. By the Baire category theorem, $\mathcal C$ cannot be represented as a union of two meager sets. Hence, either $\ModS(\sigma)$ or $\ModS(\lnot\sigma)$ must be comeager. This implies that the theory $Th^{gen}(\mathcal C)$ is complete. Since all models of a complete theory are elementarily equivalent, we have $K\equiv L$ for all $(K,X), (L,Y)\in \mathcal C_0$. 

Finally, recall that every comeager subset of a Polish space contains a dense $G_\delta$-subset by the Baire category theorem. In particular, $\mathcal C_0$ contains a dense $G_\delta$-subset $\mathcal C_1$. Since $\mathcal C_0$ is perfect, $\mathcal C_1$ has no isolated points. Every $G_\delta $-subspace of a Polish space is Polish (see \cite[Theorem~I.3.11]{Kec}), hence $\mathcal C_1$ is Polish. Being a Polish space without isolated points, $\mathcal C_1$ has the cardinality of the continuum by \cite[Corollary~I.6.3]{Kec}. Therefore, we have $|\mathcal C_0|=2^{\aleph_0}$.  
\end{proof}

\begin{proof}[Proof of Corollary \ref{Cor:EE}]
Let $\mathcal C=f(2^\ZZ)$, where $f$ is the map from Lemma \ref{Lem:4}. Note that $\mathcal C$ consists of groups of the form (\ref{ext}) by Lemma \ref{Lem:2}. Since $2^\ZZ$ is compact and $f$ is continuous, $\mathcal C$ is compact; in particular, $\mathcal C$ is a closed subset of $\G$. Note also that $\mathcal C$ has no isolated points being the image of a space without isolated points under a continuous and injective map.  

The shift action of $\ZZ$ on $2^\ZZ$ induces an action of $\ZZ$ on $\mathcal C$ by the rule
$$
n (G_S, \{a, b_0, t\}) = (G_{n\circ S}, \{a, b_0, t\}).
$$
Since the map $f\colon 2^\ZZ\to \mathcal C$ is bijective and continuous with compact domain, it is a homeomorphism; this implies that the action of $\ZZ$ on $\mathcal C$ defined above is continuous; in addition it is topologically transitive as so is the action of $\ZZ$ on $2^\ZZ$. Further, the action of $\ZZ$ on $\mathcal C$ is isomorphism-preserving by Corollary \ref{Cor:iso}. 

Applying Proposition \ref{Prop:EE}, we obtain a subset $\mathcal C_0$ of cardinality $|\mathcal C_0|=2^{\aleph_0}$ such that, for any $(G_S, \{a, b_0, t\}), (G_T, \{a, b_0, t\})\in \mathcal C_0$, we have $G_S\equiv G_T$. Clearly, for every finitely generated group $K$, the isomorphism class $[K]$ in $\G$ is countable. Therefore, there are $2^{\aleph_0}$ pairwise non-isomorphic, elementarily equivalent groups among $G_S$, $S\subseteq \ZZ$. 
\end{proof}


\begin{thebibliography}{10}

\bibitem{AhZ} 
G.\,Ahlbrandt, M.\,Ziegler, {Quasi finitely axiomatizable totally categorical theories}, Ann. Pure Appl. Logic, {\bf 30:1} (1986), pp.\,63--82.

\bibitem{AKNS} 
M. Aschenbrenner, A. Khelif,  E. Naziazeno, T. Scanlon, The logical complexity of finitely generated commutative rings. Int. Math. Res. Not. IMRN 2020, no. 1, 112-166.

\bibitem{ALM1} 
N. Avni, A. Lubotzky, C. Meiri, First order rigidity of non-uniform higher rank arithmetic lattices, Invent. Math. 217 (2019), 219--240.

\bibitem{AvM}  
N. Avni, C. Meiri, On the model theory of higher rank arithmetic groups, Duke Math. J. 172 (2023), no. 13, 2537--2590.

\bibitem{Baumslag} 
G. Baumslag, F. B. Canonito, D. Robinson, The algorithmic theory of finitely generated metabelian groups, Trans. Amer. Math. Soc. 344 (1994), no. 2, 629--648.

\bibitem{BCM} 
W. Baur, G. Cherlin, A. Macintyre, Totally categorical groups and rings, J. Algebra 57 (1979), no. 2, 407--440.

\bibitem{BG} 
E. Bunina, P. Gvozdevsky, Regular bi-interpretability and finite axiomatizability of Chevalley groups, arXiv:2311.01954v5 [math.GR].

\bibitem{DM1} 
E. Daniyarova, A. Myasnikov, Theory of Interpretations I. Foundations, arXiv:2511.13810 [math.LO].

\bibitem{DM2} 
E. Daniyarova, A. Myasnikov, Groups elementarily equivalent to metabelian Baumslag-Solitar groups and regular bi-interpretability, Ann. Pure Appl. Logic 177 (2026), no. 5, 103695.

\bibitem{DP} 
P. Dittmann, F. Pop, Characterizing finitely generated fields by a single field axiom, Ann. of Math. 198 (2023), no. 3, 1203--1227.

\bibitem{Er} 
A. Erschler, Not residually finite groups of intermediate growth, commensurability and non-geometricity, J. Algebra 272 (2004), 154--172.

\bibitem{GPS} 
F. Grunewald, P. Pickel, D. Segal, Polycyclic groups with isomorphic finite quotients, Ann. of Math. 111 (1980), no. 1, 155--195.

\bibitem{Gri}
R. Grigorchuk, Degrees of growth of finitely generated groups and the theory of invariant means, Izv. Akad. Nauk SSSR Ser. Mat. 48 (1984), no. 5, 939--985.

\bibitem{GMO} 
A. Garreta, A. Myasnikov, D. Ovchinnikov, Full rank presentations and nilpotent groups: structure, Diophantine problem, and genericity, J. Algebra 556 (2020), 1--34.

\bibitem{Hall}
P. Hall, Finiteness conditions for soluble groups, Proc. London Math. Soc. 4 (1954), no. 3, 419--436.

\bibitem{Hodges} 
W. Hodges, { Model Theory}, Cambridge University Press, 1993.

\bibitem{Kaye} 
R. Kaye, Models of Peano Arithmetic, Oxford Logic Guides 15, Oxford University Press, 1991.

\bibitem{Kec}
A. Kechris, {  Classical descriptive set theory}, Graduate Texts in Mathematics 156, Springer-Verlag, New York, 1995.

\bibitem{Khelif}  
A. Khelif, Bi-interpretabilit\'e et structures QFA: \'etude des groupes r\'esolubles et des anneaux commutatifs, C. R. Acad. Sci. Paris Ser. I 345 (2007), 59--61.

\bibitem{KM1}  
O. Kharlampovich, A. Miasnikov, Groups elementarily equivalent to a finitely generated free metabelian group, Groups Geom. Dyn. 19 (2025), no. 2, 681--710.

\bibitem{KM3} 
O. Kharlampovich, A. Myasnikov, Elementary theory of free non-abelian groups, J. Algebra 302 (2006), no. 2, 451--552.

\bibitem{KMS} 
O. Kharlampovich, A. Myasnikov, M. Sohrabi, Rich groups, weak second order logic and applications, in: Groups and Model Theory, GAGTA Book 2, De Gruyter, 2021, 127--193.

\bibitem{Lasserre} 
C. Lasserre, Polycyclic-by-finite groups and first-order sentences, J. Algebra 396 (2013), 18--38.

\bibitem{Lasserre2} 
C. Lasserre, R. J. Thomson's groups F and T are bi-interpretable with the ring of integers, J. Symbolic Logic 79 (2014), no. 3, 693--711.

\bibitem{Malcev} 
A. Malcev, Constructive algebras I, Uspekhi Mat. Nauk 16 (1961), no. 3, 3--60.

\bibitem{Mat} 
Y. Matiyasevich, Enumerable sets are Diophantine, Dokl. Akad. Nauk SSSR 191 (1970), 279--282.

\bibitem{MN} 
A. Miasnikov, A. Nikolaev, Nonstandard polynomials: algebraic properties and elementary equivalence, arXiv:2409.14467.

\bibitem{MOW}
A. Minasyan, D. Osin, S. Witzel, Quasi-isometric diversity of finitely generated groups, J. Topology 14 (2021), no. 2, 488--503.

\bibitem{Neu}
H. Neumann, Varieties of groups, Springer-Verlag, New York, 1967.

\bibitem{Nies1}  
A.\,Nies, {  Comparing quasi-finitely axiomatizable and prime groups}, J. Group Theory 10 (2007), 347--361.

\bibitem{Nies2} 
A.\,Nies, {  Describing groups}, Bull. Symbolic Logic 13 (2007), no. 3, 305--339.

\bibitem{O2} 
F. Oger, Elementary equivalence of a polycyclic-by-finite group and its profinite completion, Arch. Math. 52 (1989), 521--525.

\bibitem{OgerSabbagh} 
F. Oger, G. Sabbagh, Quasi-finitely axiomatizable nilpotent groups, J. Group Theory 9 (2006), 95--106.

\bibitem{Osi21a}
D. Osin, A topological zero-one law and elementary equivalence of finitely generated groups, Ann. Pure Appl. Logic 172 (2021), no. 3, 102915.

\bibitem{Osi21b}
D. Osin, Condensed groups in product varieties, J. Group Theory 24 (2021), no. 4, 753--763.

\bibitem{Pickel1} 
P. Pickel, Finitely generated nilpotent groups with isomorphic finite quotients, Trans. Amer. Math. Soc. 160 (1971), 327--341.

\bibitem{Pickel2}  
P. Pickel, Metabelian groups with the same finite quotients, Bull. Austral. Math. Soc. 11 (1974), 115--120.

\bibitem{Prest} 
M. Prest, {  Model Theory and Modules}, London Mathematical Society Lecture Note Series 130, Cambridge University Press, 1988.

\bibitem{Rabin} 
M. Rabin, Computable algebra: general theory and theory of computable fields, Trans. Amer. Math. Soc. 95 (1960), no. 2, 341--360.

\bibitem{Robinson} 
R.\,Robinson, {  Undecidable rings}, Trans. Amer. Math. Soc. 70 (1951), 137--159.

\bibitem{Rogers} 
H. Rogers, {  The Theory of Recursive Functions and Effective Computability}, McGraw-Hill, 1967.

\bibitem{SW}  
G. Sabbagh, J. S. Wilson, Polycyclic groups, finite images, and elementary equivalence, Arch. Math. 57 (1991), 221--227.

\bibitem{Segal} 
D. Segal, {  Words: Notes on Verbal Width in Groups}, LMS Lecture Note Series 361, Cambridge University Press, 2009.

\bibitem{Sela} 
Z. Sela, Diophantine geometry over groups VI: The elementary theory of a free group, Geom. Funct. Anal. 16 (2006), no. 3, 707--730.

\bibitem{Soare} 
R. Soare, {  Recursively enumerable sets and degrees}, Springer-Verlag, Berlin, 1987.

\bibitem{W}
J. Williams, Isomorphism of finitely generated solvable groups is weakly universal, J. Pure Appl. Algebra 219 (2015), no. 5, 1639--1644.

\end{thebibliography}
\end{document}